\newtheorem{theorem}{Theorem}[section]
\newtheorem{lemma}[theorem]{Lemma}
\newtheorem{fact}{Fact}[section]
\newtheorem{observation}[theorem]{Observation}
\newtheorem{proposition}[theorem]{Proposition}
\newtheorem{corollary}[theorem]{Corollary}
\newtheorem{problem}[theorem]{Problem}
\newtheorem{conjecture}[theorem]{Conjecture}
\newtheorem{claim}{Claim}[section]
\numberwithin{equation}{section}
\begin{document}
\textwidth 150mm \textheight 225mm

\title{Constrained Ramsey numbers for rainbow $P_5$}

\author{
Xihe Li\footnote{School of Mathematics and Statistics, Shaanxi Normal University, Xi'an, Shaanxi 710119, China.}~\footnote{Corresponding author.}~~~~~~
Xiangxiang Liu\footnote{College of Science, Northwest A\&F University, Yangling, Shaanxi 712100, China.}~~~~~~
}
\date{}
\maketitle
\newcommand\blfootnote[1]{%
\begingroup
\renewcommand\thefootnote{}\footnote{#1}%
\addtocounter{footnote}{-1}%
\endgroup
}
\blfootnote{E-mail addresses: xiheli@snnu.edu.cn, xxliumath@163.com.}
\begin{center}
\begin{minipage}{120mm}
\vskip 0.3cm
\begin{center}
{\small {\bf Abstract}}
\end{center}
{\small
Given a graph $H$ and a positive integer $k$, the {\it $k$-colored Ramsey number} $R_k(H)$ is the minimum integer $n$ such that in every $k$-edge-coloring of the complete graph $K_{n}$, there is a monochromatic copy of $H$.
Given two graphs $H$ and $G$, the {\it constrained Ramsey number} (also called {\it rainbow Ramsey number}) $f(H,G)$ is defined as the minimum integer $n$ such that, in every edge-coloring of $K_{n}$ with any number of colors, there is either a monochromatic copy of $H$ or a rainbow copy of $G$.
Let $P_t$ be the path on $t$ vertices.
Gy\'{a}rf\'{a}s, Lehel and Schelp proved that $f(H,P_5)=R_3(H)$ when $H$ is a path or a cycle.
Li, Besse, Magnant, Wang and Watts conjectured that $f(H,P_5)=R_3(H)$ for any graph $H$, and confirmed this for all connected graphs and all bipartite graphs.
In this paper, we address this conjecture for multiple classes of disconnected graphs with chromatic number at least 3.
Our newly established general results encompass all known results on this problem.
We also obtain several results for a bipartite variation of the problem.
In addition, we propose a series of questions concerning this problem from multiple distinct aspects for further research.

\vskip 0.1in \noindent {\bf AMS Subject Classification (2020)}: \ 05C55, 05D10, 05C35
\vskip 0.1in \noindent {\bf Keywords}: \ Constrained Ramsey number, rainbow Ramsey number, rainbow subgraph, rainbow path, Gallai-Ramsey number
}
\end{minipage}
\end{center}

\section{Introduction}
\label{sec:introduction}

Ramsey theory investigates the thresholds at which specific patterns become unavoidable in arbitrarily edge-colored host graphs.
Classical Ramsey problems, rooted in Ramsey's foundational 1930 theorem~\cite{Ram}, investigate the inevitability of monochromatic subgraphs, where an edge-colored graph is called {\it monochromatic} if all edges are colored the same.
In 1950, Erd\H{o}s and Rado \cite{ErRa} proved the following {\it Canonical Ramsey Theorem}.
Here an edge-colored graph is called {\it rainbow} if all edges are colored differently,
and {\it lexical} if there is a total order of its vertices such that two edges have the same color if and only if they share the same larger endpoint.

\begin{theorem}[Erd\H{o}s-Rado Canonical Ramsey Theorem~\cite{ErRa}]\label{th:canonical}
For any positive integer $p$, there exists an integer $n$ such that, in every edge-coloring of the complete graph $K_n$ with any number of colors, there is a monochromatic, a rainbow, or a lexically colored $K_p$.
\end{theorem}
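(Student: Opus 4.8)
The plan is to deduce the statement from Ramsey's theorem for $4$-uniform hypergraphs by recording, on each $4$-set of vertices, the full pattern of color-coincidences among its six edges. Let $c$ be the given edge-coloring of $K_n$ and fix the linear order $v_1<v_2<\cdots<v_n$ on $V(K_n)$. For a $4$-set $a<b<c<d$, let $\chi(\{a,b,c,d\})$ be the partition of the six edges $ab,ac,ad,bc,bd,cd$ determined by the relation $e\sim f\iff c(e)=c(f)$. There are at most $B_6$ (the sixth Bell number) such partitions, a bound independent of $n$, so applying the $4$-uniform hypergraph Ramsey theorem with $B_6$ colors yields, for $n$ large enough in terms of $p$, a set $S$ of $m$ vertices---with $m$ as large as we please---on which $\chi$ is constant. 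Take $m\ge\max\{p,6\}$.

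First I would show that constancy of $\chi$ forces the color-coincidence of any two edges of $S$ to depend only on the order type of their endpoints. Two edges span either three vertices (sharing an endpoint) or four; up to order type there are exactly six configurations: the ``share-min'' pair $\{ab,ac\}$, the ``share-max'' pair $\{ac,bc\}$, and the ``share-mid'' pair $\{ab,bc\}$ for $a<b<c$, together with the three disjoint pairings $\{ab,cd\}$ (separated), $\{ac,bd\}$ (crossing), $\{ad,bc\}$ (nested) for $a<b<c<d$. To each configuration I assign one bit, ``equal'' or ``distinct.'' Well-definedness is itself a consequence of constancy: any fixed pair of edges of a given type can be embedded as the same pair of positions in several overlapping $4$-sets (inserting an auxiliary vertex above or below), and since $\chi$ is constant these embeddings must report the same bit; here is where the size of $S$ is used.

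Then I would determine, using transitivity of color-equality across overlapping configurations, which of the $2^6$ bit-vectors survive. The key propagation steps are: (i) if any disjoint bit is ``equal,'' then chaining equalities through auxiliary vertices---e.g.\ $c(ab)=c(xy)=c(ac)$ for suitable $x,y$ above $c$ in the separated case---shows every two edges of $S$ agree, so $S$ is monochromatic; (ii) if the share-mid bit is ``equal,'' then $c(ab)=c(bc)=c(cd)$ forces the separated bit to be ``equal,'' hence case (i) applies; (iii) if both share-min and share-max are ``equal,'' then $c(ab)=c(ac)=c(bc)$ forces share-mid ``equal,'' again reducing to (i). Consequently, if $S$ is not monochromatic then all three disjoint bits are ``distinct,'' the share-mid bit is ``distinct,'' and at most one of share-min, share-max is ``equal.'' The three remaining bit-vectors are: all six ``distinct,'' which makes every two edges of $S$ differently colored, i.e.\ a rainbow $K_m$; exactly share-max ``equal,'' which means two edges agree iff they share their larger endpoint, i.e.\ a lexical coloring; and exactly share-min ``equal,'' which becomes lexical after reversing the order of $S$. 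Restricting to any $p$ vertices of $S$ then produces a monochromatic, rainbow, or lexically colored $K_p$.

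The step I expect to be the main obstacle is the propagation analysis above, together with the well-definedness claim: both require that every equality or inequality I chain be witnessed inside a single $4$-set (or a short, explicitly chosen sequence of overlapping $4$-sets) so that the constant value of $\chi$ may be invoked legitimately; a transitivity chain that strays outside controlled configurations is not justified. I would therefore organize the argument as a short sequence of lemmas that first settle the three disjoint bits, then the share-mid bit, and finally the interaction of share-min and share-max, each contradiction being extracted from a fixed configuration on at most five or six vertices of $S$.
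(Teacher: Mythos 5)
The paper does not prove this statement at all: Theorem~\ref{th:canonical} is quoted as a classical result of Erd\H{o}s and Rado~\cite{ErRa}, so there is no in-paper proof to compare against. Judged on its own, your argument is the standard (and correct) proof of the canonical Ramsey theorem for pairs: color the $4$-subsets by the coincidence pattern of their six edges, apply the $4$-uniform Ramsey theorem with at most $B_6$ colors, and then analyze which patterns can survive. The two points you flag as delicate do go through exactly as you suggest. For well-definedness, a pair of disjoint edges spans a unique $4$-set, so only the three shared-vertex types need the consistency argument, and a five-vertex chain settles it; e.g.\ for $v_1<\cdots<v_5$, the pair $\{v_1v_3,v_1v_4\}$ sits as positions $(12,13)$ inside $\{v_1,v_3,v_4,v_5\}$ and as $(13,14)$ inside $\{v_1,v_2,v_3,v_4\}$, forcing those position-bits to agree. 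For the propagation, each implication is witnessed inside at most six vertices: a disjoint bit being ``equal'' forces all shared-vertex bits to be ``equal'' by routing both edges of a pair through a common auxiliary edge (chosen above, below, or straddling them, depending on whether the bit is separated, nested, or crossing), after which everything collapses to monochromatic; share-mid ``equal'' gives $c(ab)=c(bc)=c(cd)$ hence separated ``equal''; and share-min plus share-max ``equal'' gives share-mid ``equal.'' The three surviving non-monochromatic patterns are rainbow, max-lexical, and min-lexical, the last being lexical after reversing the order, which the paper's definition (``there is a total order'') explicitly allows. So your proposal is a complete and correct proof strategy; it simply supplies the classical argument that the paper outsources to~\cite{ErRa}.
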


Motivated by the Canonical Ramsey Theorem, Eroh~\cite{Eroh}, Jamison, Jiang and Ling~\cite{JaJL}, and independently, Chen, Schelp and Wei~\cite{ChSW}, introduced the following notion.
Given two graphs $H$ and $G$, the {\it constrained Ramsey number} (also called {\it rainbow Ramsey number}) $f(H,G)$ is defined as the minimum integer $n$ such that, in every edge-coloring of $K_{n}$ with any number of colors, there is either a monochromatic copy of $H$ or a rainbow copy of $G$.
It follows from Theorem~\ref{th:canonical} that $f(H,G)$ exists if and only if either $H$ is a star or $G$ is acyclic (see also \cite{Eroh,JaJL}).
Since its inception, this concept has spurred extensive research into analogous problems across diverse combinatorial structures, including bipartite graphs~\cite{BGLS,ErOe,WaLi}, hypergraphs~\cite{LiWa,Liu}, ordered graphs~\cite{GIMSS}, random graphs~\cite{BHHLM,CKMM}, Euclidean space~\cite{FGSXX,GeST} and Boolean lattices~\cite{CCLL}.
In addition, the study of rainbow colored graphs can be traced back to Euler's 1782 Latin square decomposition problem.
In fact, many classical problems in combinatorics can be reformulated as the search for specific rainbow substructures within edge-colored graphs,
such as Ringel's conjecture~\cite{MoPS21}, the Caccetta-H\"{a}ggkvist conjecture~\cite{AhDH} and graph decomposition problems~\cite{GKMO}.
Moreover, rainbow generalizations of Ramsey-type problems~\cite{FoGP,LiBW}, Tur\'{a}n-type problems~\cite{LiMZ,KMSV} and saturation problems~\cite{LiMX} has been extensively studied in the last two decades.
For more results on this topic, we refer to \cite{AJMP,AxLe,GMSW,Jin,LLSZ,LoSu}.

In 2003, Jamison, Jiang and Ling~\cite{JaJL} initiated the study of $f(S,T)$, where both $S$ and $T$ are trees.
They proved that $\Omega(st)\leq f(S,T)\leq O(st\cdot r(T))$, where $s=|E(S)|$, $t=|E(T)|$ and $r(T)$ is the radius of $T$.
%And they asked whether $f(S,T)$ is maximized by $f(P_{s+1}, P_{t+1})$, where $P_t$ is the path on $t$ vertices.
Let $P_t$ be the path on $t$ vertices.
Jamison, Jiang and Ling asked whether $f(S,T)$ is maximized by $f(P_{s+1}, P_{t+1})$.
In 2007, Gy\'{a}rf\'{a}s, Lehel and Schelp~\cite{GyLS} showed that for $t+1\in \{4,5\}$, the answer is negative.
Moreover, Jamison, Jiang and Ling~\cite{JaJL} conjectured that $f(S,T)= O(st)$.
In 2009, Loh and Sudakov~\cite{LoSu} proved that $f(S, P_t) = O(st \log t)$, which matches the lower bound up to a logarithmic factor.
This result was improved by Gishboliner, Milojevi\'{c}, Sudakov and Wigderson~\cite{GMSW} recently, who proved a nearly optimal upper bound for $f(S, P_t)$ which differs from the lower bound by a function of inverse-Ackermann type.
Nowadays, $f(H, P_t)$ is still one of the most interesting cases of the constrained Ramsey problem.

For $t\in \{4,5\}$, the constrained Ramsey number $f(H, P_t)$ exhibits a surprising connection to the classical Ramsey number for $H$.
%For a positive integer $k$, let $[k]\colonequals \{1, 2, \ldots, k\}$.
%For a graph $F$, we refer to a mapping $c: E(F) \to [k]$ as a {\it $k$-edge-coloring} of $F$.\footnote{Note that we do not require all the $k$ colors appear and the edge-coloring is not necessarily a proper edge-coloring.}
For a graph $F$, we refer to a mapping $c: E(F) \to \{1, 2, \ldots, k\}$ as a {\it $k$-edge-coloring} (not necessarily a proper edge-coloring) of $F$.\footnote{The mapping need not be surjective, so we do not require all the $k$ colors to be used.}
Given a graph $H$ and a positive integer $k$, the {\it $k$-colored Ramsey number} $R_k(H)$ is the minimum integer $n$ such that in every $k$-edge-coloring of the complete graph $K_{n}$, there is a monochromatic copy of $H$.
Gy\'{a}rf\'{a}s, Lehel and Schelp~\cite{GyLS} proved that $f(H, P_4)=R_2(H)$ for any graph $H$ of order at least $5$, and $f(H,P_5)=R_3(H)$ when $H$ is a path or a cycle.
Note that for any graph $H$, we have $f(H,P_t)\geq R_{t-2}(H)$ since a $(t-2)$-edge-colored graph trivially contains no rainbow $P_t$.
In particular, we have
%$$f(H,P_5)\geq R_3(H).$$
\begin{equation}\label{eq:lower bound}
f(H,P_5)\geq R_3(H).
\end{equation}
Motivated by these results, we propose the following conjecture.

\begin{conjecture}\label{conj:P5}
For any graph $H$, we have $f(H,P_5)=R_3(H)$.
\end{conjecture}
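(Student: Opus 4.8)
Since the lower bound $f(H,P_5)\ge R_3(H)$ is already recorded in \eqref{eq:lower bound}, the plan is to prove only the reverse inequality: with $n=R_3(H)$, every edge-coloring $c$ of $K_n$ that avoids a rainbow $P_5$ must contain a monochromatic $H$. If $c$ uses at most three colors this is immediate from the definition of $R_3(H)$, so the whole game is to understand colorings that use four or more colors yet contain no rainbow $P_5$.

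For this I would lean on the structural description of $P_5$-rainbow-free colorings (in the spirit of Thomason and Wagner's characterization of complete graphs without a rainbow path). The point is that such colorings are extremely rigid once four colors appear: apart from the ``few-colors'' regime, the only way to introduce many colors without creating a rainbow path on five vertices is to concentrate them near a bounded exceptional set. Concretely, I expect the relevant case to be that there is a vertex $v$ (more generally a bounded set $B$) such that $K_n-v$ is \emph{monochromatic}, say in color $1$, while the edges at $v$ may be colored freely; one checks directly that no rainbow $P_5$ arises, because any path meets at most two edges at $v$ and all of its remaining edges share color $1$. Thus the substantive case hands us a monochromatic clique $K_{n-1}$ of color $1$.

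Now comes the embedding step. A monochromatic $K_{n-1}$ contains a monochromatic copy of \emph{any} graph on at most $n-1$ vertices, and the extra vertex $v$ can additionally be used to host an isolated vertex of $H$ (its incident colors being irrelevant there). Hence a monochromatic $H$ is produced as soon as either $H$ has an isolated vertex, or $|V(H)|\le n-1=R_3(H)-1$, i.e. $R_3(H)>|V(H)|$. In other words, the conjecture reduces to the purely Ramsey-theoretic inequality $R_3(H)>|V(H)|$ for graphs $H$ with $\chi(H)\ge 3$ and no isolated vertex (with a little extra slack to absorb a larger exceptional set $B$, should the characterization require it), which is equivalent to exhibiting a coloring of $K_{|V(H)|}$ with at most three colors and no \emph{spanning} monochromatic $H$.

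I expect this last inequality to be the main obstacle, and to be exactly where connectivity matters. For connected $H$ with $\chi(H)\ge 3$ it is easy: color $K_{|V(H)|}$ by splitting the vertices into two cliques $K_a\cup K_b$ in color $1$ and the complete bipartite graph between them in color $2$; the color-$1$ graph is disconnected so it cannot contain a connected spanning $H$, while the color-$2$ graph is bipartite and so cannot contain the odd cycle forced by $\chi(H)\ge 3$. For disconnected $H$ this argument breaks down, since the components of $H$ can be redistributed among the two cliques and thereby fit into color $1$; one must instead design the coloring with the component sizes and chromatic numbers of $H$ in mind. Controlling this balance for an arbitrary disconnected $H$ of chromatic number at least $3$ is precisely the difficulty, and it is why I would expect the conjecture to be provable only for structured families of such graphs rather than in full generality.
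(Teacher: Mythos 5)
You should first note what you are attempting: the statement is Conjecture~\ref{conj:P5}, which the paper does \emph{not} prove — it is an open problem (equivalent to a conjecture of Li, Besse, Magnant, Wang and Watts), and the paper only establishes it for particular families of graphs. The fatal gap in your argument is the structural claim about rainbow-$P_5$-free colorings with at least four colors. You assert that such colorings must concentrate all colors but one near a bounded exceptional set, so that deleting a vertex $v$ (or a bounded set $B$) leaves a monochromatic clique. This is false. The correct characterization (Thomason--Wagner, used in the paper as Lemma~\ref{le:Caseb}) says that, when $n\geq R_3(H)$ and there is no monochromatic $H$, the vertex set partitions into $k-1$ parts $V_2,\ldots,V_k$ with $C(V_i)\subseteq\{1,i\}$ for each $i$ and all edges between distinct parts colored $1$. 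In such a coloring every part can have linear size and carry an \emph{arbitrary} $2$-coloring with colors $1$ and $i$; one checks directly that no rainbow $P_5$ ever arises, and yet no deletion of a bounded set leaves a monochromatic graph. The ``$K_n-v$ monochromatic'' configuration you isolate is indeed one of the Thomason--Wagner cases, but it is the trivial one — it is precisely the case already killed by the no-monochromatic-$H$ hypothesis built into Lemma~\ref{le:Caseb}. Consequently your reduction of the conjecture to the inequality $R_3(H)>|V(H)|$ does not go through.

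Your diagnosis of where the difficulty lies is also inverted. The inequality $R_3(H)>|V(H)|$ for graphs $H$ with $\chi(H)\geq 3$ and no isolated vertices is easy even when $H$ is disconnected: in $K_{|V(H)|}$, color all edges incident to one fixed vertex $v$ with color $2$ and all remaining edges with color $1$. A color-$1$ copy of $H$ would have to be spanning with $v$ isolated, impossible since $H$ has no isolated vertices; and the color-$2$ graph is a star, hence bipartite, so it contains no $H$. The genuinely hard part of the conjecture — and the content of Sections~\ref{sec:proof-Rk}--\ref{sec:proof-3} of the paper — is to extract a monochromatic $H$ from the partition structure above, where each color $i\geq 2$ lives only inside $V_i$ and color $1$ consists of a complete multipartite graph between the parts together with unknown graphs inside the parts. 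Doing this requires good lower bounds on $R_3(H)$ (Lemma~\ref{le:R3}), auxiliary two-color Ramsey results, and delicate case analysis on the part sizes, and it is exactly where the argument is only known to succeed for structured families; for general disconnected $H$ with $\chi(H)\geq 3$ the conjecture remains open.
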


In fact, an equivalent conjecture was first posed by Li, Besse, Magnant, Wang and Watts~\cite{LBMWW} in the language of Gallai-Ramsey numbers.
Given graphs $G$ and $H$ and a positive integer $k$, the {\it Gallai-Ramsey number} $gr_{k}(G : H)$ is the minimum integer $n$ such that every $k$-edge-coloring of $K_{n}$ contains either a rainbow copy of $G$ or a monochromatic copy of $H$.\footnote{This definition does not require all $k$ colors to be used. In the literature, a variant of the Gallai-Ramsey number requires that every color appears at least once (e.g., \cite{LiSi,LiWL,WMSZ}). In the case $G=P_5$, these two definitions yield distinct values; see \cite{LiWL} for a concrete example.}
In 2020, Li et al. conjectured that $gr_{k}(P_5 : H)= R_3(H)$ holds for any graph $H$ and $k\geq 3$ (see \cite[Conjecture~1]{LBMWW}).
Note that $f(H, P_5)=\max_{k\geq 1} gr_{k}(P_5 : H)$ and $gr_{k}(P_5 : H)\geq R_3(H)$ for $k\geq 3$.
Therefore, the assertion that $f(H,P_5)=R_3(H)$ is equivalent to the assertion that $gr_{k}(P_5 : H)= R_3(H)$ holds for $k\geq 3$.
Moreover, Li et al. obtained the following result (see \cite[Lemmas~1 and 2]{LBMWW}), generalizing the above mentioned result of Gy\'{a}rf\'{a}s, Lehel and Schelp~\cite{GyLS}.

\begin{theorem}{\normalfont (\cite{LBMWW})}\label{th:LBMWW-1}
If $H$ is a connected graph or a bipartite graph, then $f(H,P_5)=R_3(H)$.
\end{theorem}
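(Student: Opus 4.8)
The lower bound is already recorded in \eqref{eq:lower bound}, so the entire task is to prove $f(H,P_5)\le R_3(H)$: given any edge-colouring $c$ of $K_n$ with $n=R_3(H)$ and no rainbow $P_5$, I must produce a monochromatic $H$. If $c$ uses at most three colours this is immediate from the definition of $R_3(H)$, so the substantive case is when $c$ uses at least four colours. The plan is to first pin down the structure of rainbow-$P_5$-free colourings that use many colours, and then to exploit either the connectivity or the bipartiteness of $H$ to read off a monochromatic copy.

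The structural heart is the following claim, which I would prove by a short extremal analysis (it is close to the Thomason--Wagner classification of complete graphs without a long rainbow path): if $c$ has no rainbow $P_5$ and uses at least four colours, then one colour $d$ can be singled out as \emph{dominant}, meaning that the edges receiving the remaining colours form a graph $E^{\ast}$ each of whose connected components is monochromatic under $c$. The mechanism is that three pairwise distinctly coloured non-$d$ edges would have to lie in three different components of $E^{\ast}$, hence be pairwise vertex-disjoint; but a $P_5$ has only five vertices and therefore at most two independent edges, so no $P_5$ can see more than two non-$d$ colours, and together with $d$ that is at most three colours. Establishing this dichotomy cleanly, and disposing of the sporadic small configurations (such as those in which $E^{\ast}$ itself carries a rainbow $P_3$, forcing the total number of colours down to three), is the step I expect to require the most care, and it is the genuine obstacle in the proof.

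For \emph{connected} $H$ the rest is quick. I merge colours into a $2$-colouring $c'$ of $K_n$ by recolouring every $d$-edge with colour $1$ and every edge of $E^{\ast}$ with colour $2$. Since $n=R_3(H)\ge R_2(H)$, the colouring $c'$ contains a monochromatic copy of $H$. If this copy lies in colour $1$ it is already a genuine $d$-coloured copy, and we are done. If it lies in colour $2$ then, being connected, it is contained in a single component of $E^{\ast}$; as that component is monochromatic under $c$, the copy is monochromatic in the original colouring, and again we are done. Connectivity is used exactly once here, but decisively, to confine the copy to one component.

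For \emph{bipartite} $H$ the merging argument breaks down, since a disconnected $H$ landing in colour $2$ may spread over components of different colours. Instead I would embed $H$ directly into the dominant colour. The $d$-coloured graph is $K_n-E^{\ast}$, which contains the complete multipartite graph whose parts are the vertex sets of the components of $E^{\ast}$ (with the remaining vertices as singleton parts); because one side of a bipartite $H$ is an independent set, I can place the two sides of $H$ into two disjoint unions of parts and so embed $H$, using $n\ge |V(H)|$ to guarantee enough room. The only degenerate situation is when some component of $E^{\ast}$ is a spanning monochromatic clique, which leaves too few vertices for one side of $H$; but then that clique is itself a monochromatic complete graph large enough to contain $H$. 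Thus in every case a monochromatic $H$ appears. I expect the book-keeping in this embedding, namely balancing the two sides of $H$ against the part sizes and handling the giant-component degeneracy, to be the only delicate point beyond the structural claim above.
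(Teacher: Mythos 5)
Your proposal has two genuine gaps, one in each half. The first is the structural claim itself, which is false as stated. Consider the coloring of $K_n$ in which every edge of $K_n-v$ gets color $1$ and the edges at $v$ get three or more further colors, or the coloring in which one triangle is rainbow in colors $2,3,4$ and every other edge has color $1$. Both colorings use at least four colors and contain no rainbow $P_5$ (any path on five vertices picks up at most two edges at $v$, respectively at most two triangle edges, so it repeats color $1$), yet for \emph{every} choice of a candidate dominant color $d$ the non-$d$ edges have a connected component that is not monochromatic. These are exactly the additional cases in the Thomason--Wagner characterization \cite{ThWa}; they are not ``sporadic small configurations,'' and they do not force the number of colors down to three. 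The correct statement, Lemma~\ref{le:Caseb} of \cite{LBMWW}, carries the extra hypotheses $n\geq R_3(H)$ \emph{and no monochromatic copy of $H$}; it is precisely this second hypothesis that eliminates the configurations above (in them color $1$ covers $K_n$ minus a star or a triangle and hence contains $H$). Your ``mechanism'' paragraph only argues that the dominant structure implies the absence of a rainbow $P_5$, which is the converse of what you need, and you explicitly defer the real implication. So the crux of your proof is an untrue statement; it can be repaired by adding the no-monochromatic-$H$ hypothesis and handling the exceptional cases, but that work is exactly the content of the lemma you would have to prove or cite.

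The second gap is in the bipartite case and survives even after the structural claim is corrected. The components of $E^{\ast}$ are monochromatic \emph{connected} subgraphs, not cliques, and they can be simultaneously huge and sparse: for instance, inside one part the color-$2$ edges may form a Hamilton path on $n-4$ vertices, with two further components of size $2$ in colors $3$ and $4$. Then the complete multipartite graph between components has part sizes $n-4,2,2$, and if both sides of $H$ have at least five vertices, no assignment of the two sides to disjoint unions of parts exists, even though $n\geq |V(H)|$; meanwhile your fallback fails because the giant component is a path, not a ``spanning monochromatic clique,'' so it need not contain $H$. In this coloring a monochromatic $H$ does exist, but only in color $1$ and only using edges \emph{inside} the giant part, which your embedding has discarded. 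The paper's route (the proof of Theorem~\ref{th:chro}, which settles all bipartite $H$ since $\chi(H)+1=3$) avoids this: it groups the parts $V_2,\ldots,V_k$ of Lemma~\ref{le:Caseb} into $\chi(H)$ unions chosen as balanced as possible, merges colors accordingly, applies the definition of $R_{\chi(H)+1}(H)$ to the merged coloring, and uses the balancing statement (Claim~\ref{cl:chro-1}) to show that whenever the resulting monochromatic copy is not genuine, all unions have size at least $\tfrac12|V(H)|$, so the sides of $H$ embed between them in the dominant color. Some Ramsey-plus-balancing argument of this kind is what your bipartite case is missing. Your connected-graph argument, by contrast, is sound once the structural lemma is stated and proved in its correct form, and it coincides with the standard color-merging argument from \cite{LBMWW}.
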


In this paper, we continue the study of constrained Ramsey number for rainbow $P_5$.
According to Theorem~\ref{th:LBMWW-1}, it suffices to study disconnected graphs with chromatic number at least 3.
For such graphs $H$, Conjecture~\ref{conj:P5} was only confirmed in very few cases.
In the following, we list all disconnected graphs $H$ for which Conjecture~\ref{conj:P5} was confirmed (in the subsequent statements, the notation $nG$ represents the disjoint union of $n$ copies of a graph $G$):
\begin{itemize}
\item $H=tG$, where $t\geq 2$ and $G$ is a 3-color-critical graph\footnote{A graph is called {\it $r$-color-critical} if it has chromatic number $r$, and it has an edge (called a {\it critical edge}) whose removal reduces the chromatic number to $r-1$.}; see \cite[Theorem~5]{LBMWW}.
\item $H=tK_r$, where $2\leq t\leq r-1$; see \cite[Theorem~6]{LBMWW}.
\item $H=2G$, where $G$ is a connected graph; see \cite[Theorem~7]{LBMWW}.
\end{itemize}
%
%In the following, we list all the graphs $H$ for which Conjecture~\ref{conj:P5} was confirmed.
%\begin{itemize}
%\item $H$ is a path; see \cite[Theorem~6]{GyLS}.
%\item $H$ is a cycle; see \cite[Theorem~10]{GyLS}.
%\item $H$ is a connected non-bipartite graph; see \cite[Theorem~8]{GyLS}.
%\item $H$ is a connected graph; see \cite[Lemma~1]{LBMWW}.
%\item $H$ is a bipartite graph; see \cite[Lemma~2]{LBMWW}.
%\item $H=mG$, where $m\geq 2$ and $G$ is a 3-color-critical\footnote{A graph is called {\it $r$-color-critical} if it has chromatic number $r$, and it has an edge (called a {\it critical edge}) whose removal reduces the chromatic number to $r-1$.} graph; see \cite[Theorem~5]{LBMWW}.
%\item $H=nK_r$, where $2\leq n\leq r-1$; see \cite[Theorem~6]{LBMWW}.
%\item $H=2G$, where $G$ is a connected graph; see \cite[Theorem~7]{LBMWW}.
%\end{itemize}
%According to these results, it suffices to study disconnected graphs with chromatic number at least 3.
%
In the remaining of this paper, we will study this problem along the following three natural directions:
\begin{itemize}
\item[(1)] Given a graph $H$, find an integer $k$ as small as possible such that $f(H,P_5)\leq R_k(H)$.
\item[(2)] Let $H$ be the disjoint union of a connected graph and its subgraphs. Show that $f(H,P_5)= R_3(H)$.
\item[(3)] For graphs $H$ of chromatic number 3, show that $f(H,P_5)= R_3(H)$.
\end{itemize}
We will present our results for each of these three directions in the following three subsections, respectively.
We remark that all listed results above are special cases of our newly established general theorems, as shown below.
In Section~\ref{sec:conclu}, we will also prove several results for a bipartite variation of the problem.
Before presenting our main results, we first introduce some additional terminology and notation.
\vspace{0.2cm}

\noindent{\bf Notation.}
For a positive integer $t$, let $[t]\colonequals \{1, 2, \ldots, t\}$.
A graph is called {\it nonempty} if it contains at least one edge.
For two graphs $G_1$ and $G_2$, we use $G_1\cup G_2$ to denote the disjoint union of $G_1$ and $G_2$, $G_1\vee G_2$ to denote the join of $G_1$ and $G_2$, and $G_1\subseteq G_2$ to denote that $G_1$ is a subgraph of $G_2$.
For a graph $G$, let $\omega(G)$ be the {\it clique number} of $G$, that is, the maximum number of vertices in a complete subgraph of $G$.
Let $\chi(G)$ be the {\it chromatic number} of $G$, that is, the minimum number of colors needed in a proper vertex-coloring of $G$.
If $\chi(G)=k$, then $G$ is said to be {\it $k$-chromatic}.
For a $k$-chromatic graph $G$, let $\sigma(G)$ be the {\it chromatic surplus} of $G$, defined as the minimum number of vertices in some color class under all proper $k$-vertex-colorings of $G$.
Given an edge-colored graph $G$ and an edge $e\in E(G)$, let $c(e)$ be the color assigned on edge $e$.
For disjoint subsets $U,V\subset V(G)$, let $E(U, V)\colonequals\{uv\in E(G)\colon\, u\in U, v\in V\}$, $C(U, V)\colonequals \{c(e)\colon\, e\in E(U,V)\}$, and $C(U)\colonequals \{c(uv)\colon\, u,v\in U\}$.
If $\left|C(U, V)\right|=1$, then we use $c(U, V)$ to denote the unique color in $C(U, V)$.
The subgraph of $G$ induced by $U$ is denoted by $G[U]$, and $G-U$ is shorthand for $G[V(G)\setminus U]$.
If $U$ consists of a single vertex $u$, then we simply write $E(\{u\}, V)$, $C(\{u\}, V)$, $c(\{u\}, V)$ and $G-\{u\}$ as $E(u, V)$, $C(u, V)$, $c(u, V)$ and $G-u$, respectively.
If $G'\subseteq G$, we use $G-G'$ to denote $G-V(G')$.

\subsection{Integers $k$ with $f(H,P_5)\leq R_k(H)$}
\label{subsec:Rk}

Our first result is an upper bound on $f(H,P_5)$ in terms of the Ramsey number and chromatic number of $H$.
As a corollary, we have $f(H,P_5)= R_{3}(H)$ for all bipartite graphs $H$.

\begin{theorem}\label{th:chro}
For any nonempty graph $H$, we have $f(H,P_5)\leq R_{\chi(H)+1}(H)$.
\end{theorem}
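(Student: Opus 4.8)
The plan is to prove the equivalent statement that every edge-colouring of $K_n$ with $n=R_{\chi(H)+1}(H)$ and no rainbow $P_5$ must contain a monochromatic copy of $H$; combined with the lower bound~\eqref{eq:lower bound} this simultaneously yields the stated corollary for bipartite $H$, where $\chi(H)=2$ forces $R_{\chi(H)+1}(H)=R_3(H)$. The engine of the argument is a structural description of rainbow-$P_5$-free colourings of a complete graph. First I would record (or invoke the Thomason--Wagner description of) the fact that, for $n\ge 5$ and up to relabelling colours and finitely many small-order exceptions, every such colouring is of one of three basic types: \emph{(I)} at most three colours are used; \emph{(II)} there is a vertex $v$ such that all edges of $K_n-v$ receive a single colour; or \emph{(III)} there is a partition $V(K_n)=V_1\cup\cdots\cup V_t$ and a colour $c_0$ such that every edge joining two different parts has colour $c_0$, while each $K_n[V_i]$ is monochromatic.

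Types (I) and (II) are quickly disposed of. In type (I) the colouring uses at most $3\le\chi(H)+1$ colours (recall $\chi(H)\ge 2$ since $H$ is nonempty), so by the very definition of $R_{\chi(H)+1}(H)=n$ there is a monochromatic $H$. In type (II) the graph $K_n-v$ is a monochromatic clique $K_{n-1}$; one embeds the non-isolated part of $H$ into this clique and places the isolated vertices of $H$ on the remaining vertices (including $v$), which succeeds because $n\ge|V(H)|$ always holds and, when $H$ has no isolated vertex, because of the auxiliary estimate $R_{\chi(H)+1}(H)\ge|V(H)|+1$. This estimate I would establish by exhibiting a two-colouring of $K_{|V(H)|}$ with no monochromatic $H$; for instance colouring the edges at one vertex red and the rest blue handles every non-star $H$ without isolated vertices, and the star and isolated-vertex cases are immediate.

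The heart of the matter---and where the index $\chi(H)+1$, rather than $3$, becomes natural---is type (III). Here a monochromatic $H$ could live only in colour $c_0$, whose graph is the complete multipartite graph $K_{n_1,\dots,n_t}$ with $n_i=|V_i|$, or inside a single part $K_{n_i}$. Assuming it does neither, we obtain $n_i\le|V(H)|-1$ for every $i$ together with $K_{n_1,\dots,n_t}\not\supseteq H$. The plan is then to recolour the within-part edges using at most $\chi(H)$ colours---keeping $c_0$ as one further colour---so that no colour class contains a monochromatic $H$; the whole colouring would then use at most $\chi(H)+1$ colours yet avoid $H$ on $R_{\chi(H)+1}(H)$ vertices, contradicting the definition of $R_{\chi(H)+1}(H)$. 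The recolouring groups the $t$ parts into $\chi(H)$ classes, so that each class becomes a disjoint union of cliques, and such a union contains $H$ exactly when the components of $H$ can be packed into those cliques. The feasibility of a good grouping is what the hypotheses $\chi(H)=\chi$ and $K_{n_1,\dots,n_t}\not\supseteq H$ should guarantee: since $H$ has chromatic number $\chi$, a complete multipartite graph with $\chi$ (or more) comparably-sized parts already contains $H$, so $K_{n_1,\dots,n_t}\not\supseteq H$ forces the parts to be few or lopsided, whence the large parts---those able to host the substantial components of $H$---number fewer than $\chi$, and each may be isolated in its own colour class while the small parts are distributed among the remaining classes.

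I expect the main obstacle to be precisely this packing-and-grouping step in type (III): making rigorous the dichotomy between ``$K_{n_1,\dots,n_t}$ already contains $H$'' and ``the parts admit a $\chi(H)$-grouping avoiding $H$,'' with careful bookkeeping of the component sizes of $H$ and of its chromatic surplus $\sigma(H)$, which is what pins down when a disjoint union of bounded cliques fails to pack all of $H$. By contrast, types (I) and (II), the estimate $R_{\chi(H)+1}(H)\ge|V(H)|+1$, and the finitely many small-order exceptions are comparatively routine, so the proof reduces to this single combinatorial lemma about grouping the parts of a type-(III) colouring.
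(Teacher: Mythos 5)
There is a genuine gap, and it starts with your statement of the structure theorem. In the Thomason--Wagner description (as distilled in Lemma~\ref{le:Caseb}), the parts of a rainbow-$P_5$-free coloring are \emph{not} monochromatic: the correct conclusion is $\{i\}\subseteq C(V_i)\subseteq\{1,i\}$, i.e.\ each part $V_i$ carries a two-coloring by the dominant color $1$ (your $c_0$) together with its private color $i$, while only the edges \emph{between} parts are forced to be color $1$. Your type~(III), with each $K_n[V_i]$ monochromatic, is strictly weaker than the truth, and your argument leans on the false consequence $n_i\le|V(H)|-1$. With the correct structure one only gets $n_i\le R_2(H)-1$, since a part merely carries a two-coloring with no monochromatic $H$; and $R_2(H)-1\ge|V(H)|$ for essentially all $H$ (e.g.\ whenever $\chi(H)\ge3$, by Lemma~\ref{le:R2}~(i)). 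Consequently your recoloring step fails: if you merge all within-part edges of a group into one new color, a single part of size $\ge|V(H)|$ becomes a monochromatic clique containing $H$, so \emph{no} grouping can produce the $(\chi(H)+1)$-coloring without monochromatic $H$ that your contradiction requires. Any repair must leave the color-$1$ edges inside parts untouched, which makes the packing problem you defer even harder (the group classes are then disjoint unions of arbitrary $H$-free graphs, not cliques). On top of this, even under your own (incorrect) structure, the ``packing-and-grouping'' dichotomy that you identify as the heart of the matter is asserted, not proved, so the proposal is incomplete precisely at its central step.

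It is worth seeing how the paper's proof of Theorem~\ref{th:chro} sidesteps both problems. It uses the correct structure from Lemma~\ref{le:Caseb}, keeps color $1$ fixed, and recolors only the private colors: the parts are grouped into $\chi(H)$ classes $A_1,\dots,A_{\chi(H)}$ chosen to be as equitable as possible (lexicographically maximizing $|A_{\chi(H)}|,|A_{\chi(H)-1}|,\dots$ subject to $|A_1|\ge\cdots\ge|A_{\chi(H)}|$), and all colors in the $i$-th class are renamed to a single new color. The logic is then inverted relative to yours: rather than trying to make the recolored graph $H$-free (which, as above, may be impossible), the paper \emph{uses} the monochromatic $H$ that must exist in the recolored $(\chi(H)+1)$-coloring of $K_{R_{\chi(H)+1}(H)}$. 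Such a copy cannot sit in color $1$ or in a single part, so it forces some class to be a union of at least two parts of total size $\ge|V(H)|$; the equitability claim then forces \emph{every} class to have size at least $\tfrac12|V(H)|$, and the between-class edges, all of color $1$, form a complete $\chi(H)$-partite graph large enough to contain $H$ --- the desired contradiction. This is the idea your proposal is missing: extract size information from the unavoidable monochromatic copy instead of trying to avoid it.
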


Let $G_1, G_2, \ldots, G_t$ be $t$ graphs of the same order.
We say that $G_1, G_2, \ldots, G_t$ are {\it $p$-homological} if $\max_{i\in [t]}\chi(G_i)=p$ and for every $i\in [t]$, $G_i$ admits a proper $p$-vertex-coloring such that, for each $j\in [p]$, the number of vertices of color $j$ in $G_i$ is the same across all $i\in [t]$.
In other words, the graphs $G_1, G_2, \ldots, G_t$ are $p$-homological if $\max_{i\in [t]}\chi(G_i)=p$ and there exist $p$ positive integers $s_1, s_2, \ldots, s_p$ such that all of $G_1, G_2, \ldots, G_t$ are spanning subgraphs of $K_{s_1, \ldots, s_p}$.
We can derive the following result for the disjoint union of homological graphs.

\begin{theorem}\label{th:homology}
Let $G_1, G_2, \ldots, G_t$ be {\it $p$-homological} connected graphs.
Then for $k=\max\{t,p,3\}$, we have $f(G_1\cup G_2\cup \cdots \cup G_t,P_5)\leq R_{k}(G_1\cup G_2\cup \cdots \cup G_t)$.
\end{theorem}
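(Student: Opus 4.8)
The plan is to prove the statement in the form: every edge-coloring $c$ of $K_n$ with no rainbow $P_5$ and $n\ge R_k(H)$ admits a monochromatic copy of $H$, where $H=G_1\cup G_2\cup\cdots\cup G_t$ and $k=\max\{t,p,3\}$. Write $s=s_1+\cdots+s_p$ for the common order $|V(G_i)|$. Since the $G_i$ are $p$-homological, each is a spanning subgraph of $K_{s_1,\ldots,s_p}$ with one common profile $(s_1,\ldots,s_p)$; placing the $j$-th color class of every component into one common part shows $\chi(H)=p$ and $H\subseteq K_{ts_1,\ldots,ts_p}$. I would first clear away two ranges. If $p\le 2$ then each $G_i$, and hence $H$, is bipartite, so Theorem~\ref{th:LBMWW-1} gives $f(H,P_5)=R_3(H)\le R_k(H)$. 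If $t\ge p+1$ then $k\ge t\ge \chi(H)+1$, so Theorem~\ref{th:chro} gives $f(H,P_5)\le R_{\chi(H)+1}(H)=R_{p+1}(H)\le R_k(H)$. Thus the substantive case is $p\ge 3$ and $t\le p$, where $k=p$, and the goal sharpens to: improve the bound $R_{p+1}(H)$ of Theorem~\ref{th:chro} to $R_p(H)$.

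Now fix such a $c$ on $K_n$ with $n\ge R_p(H)$. If $c$ uses at most $p$ colors, the definition of $R_p(H)$ already furnishes a monochromatic $H$. So assume $c$ uses at least $p+1\ge 4$ colors and invoke the structural classification of rainbow-$P_5$-free colorings that underlies the proof of Theorem~\ref{th:chro}: such a coloring is either of \emph{star type}, i.e.\ there is a vertex $v$ with $K_n-v$ monochromatic, or of \emph{multipartite type}, i.e.\ there is a partition $V(K_n)=A_1\cup\cdots\cup A_m$ with $m\ge 2$ in which every $G[A_i]$ is a monochromatic clique and all edges between distinct parts carry one common color $\alpha$. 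In the star case, $K_n-v$ is a monochromatic clique on $n-1\ge ts=|V(H)|$ vertices (using $R_p(H)>|V(H)|$), hence contains $H$, and we are done.

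The multipartite case is the heart of the argument, and is where I expect to spend the real effort. Here color $\alpha$ spans the complete multipartite graph $K_{|A_1|,\ldots,|A_m|}$, while each part interior is a monochromatic clique. I would argue along three fronts. If some part has $|A_i|\ge ts$, its interior clique already contains $H$. If $m\ge ts$, then choosing one vertex from each of $ts$ distinct parts produces a monochromatic-$\alpha$ copy of $K_{ts}\supseteq H$. In the remaining range — every part smaller than $ts$ and fewer than $ts$ parts — one must embed $H$ into the color-$\alpha$ graph directly. Comparing $n\ge R_p(H)$ against the explicit $p$-coloring of $K_{(p-1)(ts-1)}$ by $p-1$ equal parts (which is multipartite-type, has no monochromatic $H$, and hence shows $R_p(H)>(p-1)(ts-1)$) forces enough parts and enough room to assign the $p$ color classes $U_1,\ldots,U_p$ of $H$, with $|U_j|=ts_j$, to the parts so that $H$-adjacent vertices fall in distinct parts. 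Because each $G_i$ is connected and distinct components are non-adjacent, components may reuse parts freely, so this reduces to a capacitated proper-coloring problem with capacities $|A_i|$, which I would solve by a Hall-type argument: placing the heavy classes into the larger parts and spreading the remaining vertices over the clique formed by representatives of the small parts. A concrete alternative is to pass to the subset $W$ consisting of the $p-1$ largest parts in full together with one vertex from each remaining part; the restriction of $c$ to $W$ uses at most $p$ colors (the $p-1$ interior colors plus $\alpha$), so whenever $|W|\ge R_p(H)$ we finish via $R_p$.

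The main obstacle is precisely this multipartite range and its calibration to $R_p(H)$ rather than $R_{p+1}(H)$. One must show that whenever $H$ embeds neither into an interior clique nor into the spanning $\alpha$-clique, the color-$\alpha$ complete multipartite graph (or a suitable $\le p$-colored subset) nevertheless contains $H$, and that every configuration avoiding all of these has fewer than $R_p(H)$ vertices. This is exactly where the $p$-homological hypothesis is indispensable: the \emph{common} profile $(s_1,\ldots,s_p)$ lets every component be embedded against the same partition of the parts, so that no more than $p$ heavy parts are ever needed — this is what saves one color over Theorem~\ref{th:chro} and yields $R_p$ in place of $R_{p+1}$ — while the connectivity of the $G_i$ keeps each component inside a single block of the embedding. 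Making the capacitated embedding go through so that the threshold matches $R_p(H)$ exactly is the delicate counting that the whole proof turns on.
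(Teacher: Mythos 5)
Your reductions for $p\le 2$ (via Theorem~\ref{th:LBMWW-1}) and for $t\ge p+1$ (via Theorem~\ref{th:chro}) are fine, but the core case $p\ge 3$, $t\le p$, $k=p$ rests on a structure theorem that is not correct. Under the hypotheses actually available (no rainbow $P_5$, no monochromatic $H$, $n\ge R_3(H)$), the correct statement is Lemma~\ref{le:Caseb}: the vertex set splits into parts $V_2,\ldots,V_{k'}$ with all edges \emph{between} parts of color $1$, but each part interior may use \emph{two} colors, namely $\{1,i\}$ --- the interiors are not monochromatic cliques. Your ``multipartite type'' assumes each $G[A_i]$ is a monochromatic clique, and your first line of attack (``if some part has $|A_i|\ge ts$, its interior clique already contains $H$'') collapses under the correct structure: a part is a $2$-colored complete graph and can have as many as $R_2(H)-1$ vertices, far more than $ts$, without containing any monochromatic $H$. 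The ``star type'' case likewise does not survive as a separate alternative; under the standing hypotheses it is already subsumed by the partition structure. Moreover, even granting a corrected structure, the decisive step --- embedding $H$ monochromatically when no part is huge and there are few parts --- is exactly what you leave as a sketch (``Hall-type argument'', ``delicate counting''), so the proof is not complete.

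The paper closes this gap with two ideas your proposal does not contain. First, it merges all colors $k+1,\ldots,k'$ into color $k$, obtaining a $k$-coloring $F'$ of $K_n$ with $n\ge R_k(H)$; the monochromatic $H$ guaranteed in $F'$ cannot be monochromatic in the original coloring, which forces its components to lie inside distinct parts of index at least $k$, and hence forces at least $k$ parts $V_2,\ldots,V_{k+1}$ to have size at least $s=|V(G_i)|$. Second, it partitions each of these $k$ large parts into blocks of sizes $s_1,\ldots,s_p$ and assembles $k$ pairwise disjoint sets $W_1,\ldots,W_k$ diagonally (Latin-square style), each consisting of $p$ blocks of sizes $s_1,\ldots,s_p$ taken from $p$ distinct parts; since all edges between parts have color $1$, each $F[W_\ell]$ contains a color-$1$ copy of $K_{s_1,\ldots,s_p}$, and since $k\ge t$ these yield $t$ disjoint such copies, i.e.\ a color-$1$ copy of $H$, a contradiction. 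This argument needs no case split, and the $p$-homological hypothesis enters exactly where you guessed it should (one common profile $(s_1,\ldots,s_p)$ for every component); but the mechanism that ``saves a color'' relative to Theorem~\ref{th:chro} is the merged-coloring step plus the diagonal construction, not a capacitated proper-coloring of the parts.
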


Note that when $p\geq t$, Theorem~\ref{th:homology} improves Theorem~\ref{th:chro} in the case $H=G_1\cup G_2\cup \cdots \cup G_t$.
Moreover, we have the following immediate consequence by Theorem~\ref{th:homology} and Inequality~(\ref{eq:lower bound}).

\begin{corollary}\label{cor:3G}
For any connected graph $G$ with $\chi(G)=3$, we have $f(3G,P_5)= R_{3}(3G)$.
\end{corollary}

\subsection{Disjoint unions of a connected graph and its subgraphs}
\label{subsec:subgraph}

Our first result in this direction concerns the disjoint union of a connected graph and one of its connected subgraphs.

\begin{theorem}\label{th:union-1}
Let $G_1$, $G_2$ be two connected graphs with $G_1\subseteq G_2$.
Then $f(G_1\cup G_2,P_5)= R_{3}(G_1\cup G_2)$.
\end{theorem}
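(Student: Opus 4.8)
The lower bound $f(G_1\cup G_2,P_5)\ge R_3(G_1\cup G_2)$ is immediate from Inequality~(\ref{eq:lower bound}), so the entire content is the upper bound. Set $H\colonequals G_1\cup G_2$ and $n\colonequals R_3(H)$, and fix an edge-coloring $c$ of $K_n$ that contains no rainbow $P_5$; the plan is to produce a monochromatic copy of $H$. If $c$ uses at most three colors we are done immediately, since $n=R_3(H)$ forces a monochromatic $H$. Hence the real work is the case where $c$ uses at least four colors, and here I would invoke the structure of edge-colorings of a complete graph with no rainbow $P_5$ (as used in the proof of the case $H=2G$ in \cite{LBMWW}, ultimately going back to the Thomason--Wagner description of complete graphs with no rainbow path). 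The outcome I want to extract from this structure is a single dominant color $\beta$ whose color class contains a monochromatic clique on all but a bounded number of vertices of $K_n$.

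With such a dominant color in hand, the strategy splits the target $H=G_1\cup G_2$ into its two connected pieces and embeds them one at a time, both in color $\beta$. First I would secure the larger piece $G_2$: since $G_2$ is connected and $G_2\subseteq H$ gives $R_3(G_2)\le R_3(H)=n$, Theorem~\ref{th:LBMWW-1} applied to $G_2$ (whose value equals $R_3(G_2)$) guarantees a monochromatic copy of $G_2$ inside $K_n$. The point of passing through the dominant-color structure is to ensure that this monochromatic $G_2$ can be taken in the color $\beta$, i.e. inside the large $\beta$-clique, rather than in some sporadic color that does not reappear.

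The heart of the argument---and the step I expect to be the main obstacle---is then to find a copy of $G_1$ that is monochromatic in the \emph{same} color $\beta$ and vertex-disjoint from the chosen $G_2$. This is exactly where the hypothesis $G_1\subseteq G_2$ is used: $G_1$ is connected and no larger than $G_2$, so it embeds into any portion of the $\beta$-clique that survives after deleting the $|V(G_2)|$ vertices of the first copy. Quantitatively, one needs the $\beta$-clique to have size at least $|V(G_1)|+|V(G_2)|$ plus the bounded exceptional set coming from the structure theorem; because $\chi(H)\ge 3$ (the bipartite case already being covered by Theorem~\ref{th:LBMWW-1}), the Ramsey number $R_3(H)=n$ comfortably exceeds $|V(H)|$, which supplies exactly the room required. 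The delicate part is the matching of colors: a naive Ramsey or pigeonhole argument would only yield \emph{some} monochromatic $G_1$ in an arbitrary color, whereas exactness of the bound $f(H,P_5)=R_3(H)$ forces the second piece to reuse the color $\beta$ of the first, and this is what the near-monochromatic structure of no-rainbow-$P_5$ colorings must be leveraged to guarantee.

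Finally, I would dispose of the genuinely degenerate cases separately. If $G_1$ has no edge (so $G_1=K_1$), then a monochromatic $H$ is just a monochromatic $G_2$ together with one further vertex, which is free once $n>|V(G_2)|$; and if one prefers, the very small cases of $G_1$ and $G_2$, for which the generic counting slack might be tight, can be checked by hand. Assembling the monochromatic $G_2$ and the disjoint monochromatic $G_1$, both in color $\beta$, yields the desired monochromatic $G_1\cup G_2$ and completes the upper bound.
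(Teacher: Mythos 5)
There is a genuine gap, and it sits at the very foundation of your argument: the structural claim you extract from the no-rainbow-$P_5$ characterization is false. What Lemma~\ref{le:Caseb} (via Thomason--Wagner) actually gives is a partition $V_2,\ldots,V_k$ of the vertex set such that all edges \emph{between} parts have color $1$ and the edges \emph{inside} $V_i$ have colors in $\{1,i\}$. The dominant color $1$ therefore contains a complete multipartite graph, \emph{not} a clique on all but a bounded number of vertices: in the extremal scenario $V_2$ contains almost every vertex and every edge inside $V_2$ has color $2$, so the color-$1$ graph is essentially $K_{|V_2|,\,|V_3|,\ldots,|V_k|}$ with one giant part and small others. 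Since the hard case has $\chi(G_1)=\chi(G_2)=3$ (the bipartite cases being dispatched by Theorem~\ref{th:LBMWW-1}), such a color-$1$ graph need not contain $G_1\cup G_2$ at all --- e.g.\ $K_{n-2,1,1}$ contains a triangle but not $2K_3$, no matter how large $n$ is. Consequently your plan of embedding $G_2$ and then $G_1$ ``inside the large $\beta$-clique'' has nothing to stand on, and the color-matching problem you correctly identify as the heart of the matter is not resolved by the structure theorem in the way you assert; your counting remark that $R_3(H)$ ``comfortably exceeds $|V(H)|$'' addresses room, but the obstruction is chromatic, not volumetric.

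For comparison, the paper's proof has to work much harder precisely because of this. It splits on $\chi(G_2)$: for $\chi(G_2)\ge 4$ and for $\chi(G_1)\le 2<\chi(G_2)=3$ it avoids the structure theorem entirely, instead proving $R_3(G_1\cup G_2)\ge R_2(\mathscr{C}(G_1\cup G_2))$ (Lemmas~\ref{le:R2}, \ref{le:R3}, \ref{le:union-1}) and invoking Lemma~\ref{le:CH}. In the remaining case $\chi(G_1)=\chi(G_2)=3$ it does use the partition of Lemma~\ref{le:Caseb+}, but then must control the sizes $|V_2|,|V_3|,|V_4\cup\cdots\cup V_k|$ via lower bounds on $R_3(G_1\cup G_2)$ (Claims~\ref{cl:union-1+1} and \ref{cl:union-1+3}), and --- crucially --- must find color-$1$ copies of members of the decomposition families $\mathcal{M}(G_2)$ and $\mathcal{M}(G_1)$ \emph{inside} the giant part $V_2$ (Claim~\ref{cl:union-1+2}, using Lemma~\ref{le:R-decom} and $R(G_1\cup G_2,\mathcal{M}(G_2))\le R_2(G_2)\le |V_2|$), which are then completed to color-$1$ copies of $G_2$ and $G_1$ using $V_3$ and $V_4$ respectively. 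This decomposition-family mechanism is exactly the missing idea needed to handle the case where color $1$ is sparse inside the dominant part, and it is absent from your proposal.
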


As a corollary, we have $f(2G,P_5)= R_{3}(2G)$ for any connected graph $G$.
For more graphs, we have the following result.

\begin{theorem}\label{th:union-2}
Let $G, G_1, \ldots, G_t$ be connected graphs with $t\leq \frac{(\chi(G)-2)(R_2(G)-1)+\sigma(G)-1}{\chi(G)|V(G)|}$ and $G_1, \ldots, G_t\subseteq G$.
Then $f(G\cup G_1\cup \cdots \cup G_t,P_5)= R_{3}(G\cup G_1\cup \cdots \cup G_t)$.
\end{theorem}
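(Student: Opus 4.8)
The inequality $f(H,P_5)\ge R_3(H)$ is free from Inequality~(\ref{eq:lower bound}), so the entire content is the reverse bound $f(H,P_5)\le R_3(H)$ for $H=G\cup G_1\cup\cdots\cup G_t$. The plan is to set $N:=R_3(H)$, fix an arbitrary edge-coloring $c$ of $K_N$ with no rainbow $P_5$, and exhibit a monochromatic copy of $H$, i.e.\ pairwise vertex-disjoint monochromatic copies of $G,G_1,\ldots,G_t$ sharing one color. A first simplification: since every $G_i$ is a subgraph of the connected graph $G$, it suffices to produce $t+1$ pairwise vertex-disjoint copies of $G$ in a single color, as any such copy of $G$ hosts a copy of each $G_i$. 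Throughout I write $\chi:=\chi(G)$, $n_G:=|V(G)|$ and $\sigma:=\sigma(G)$, noting $\chi(H)=\chi$.

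The engine is the structure theorem for colorings of $K_N$ with no rainbow $P_5$ (Thomason--Wagner), and I would branch on the number of colors used by $c$. If $c$ uses at most three colors, then because $N=R_3(H)$ we immediately get a monochromatic $H$ and are done. So assume $c$ uses at least four colors; then the structure theorem confines $c$ to a short list of types. In the simplest (``rainbow-star'') type, $K_N$ minus a single vertex is monochromatic, hence contains a clique of one color on $N-1$ vertices; since $R_3(H)$ far exceeds $|V(H)|\le(t+1)n_G$, that clique already contains $H$ and no constraint on $t$ is needed. The remaining types are the substantive ones: there is no single color spanning a large clique, and instead one isolates a large set $W$, with $|W|\ge N-O(1)$, on which the induced coloring collapses to at most two colors.

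For this binding case the plan is to pack the $t+1$ disjoint monochromatic copies of $G$ inside the $2$-colored region $W$ by a Burr--Erd\H{o}s--Spencer-type argument. The point is that an extremal $2$-coloring of a clique avoiding many disjoint monochromatic $G$'s is built from a $(\chi-1)$-partite skeleton whose blocks have size $R_2(G)-1$, with a final, partially filled block governed by the chromatic surplus $\sigma$; absorbing one further disjoint copy of $G$ into such a skeleton costs about $\chi\,n_G$ vertices. Consequently a $2$-coloring of $K_m$ yields the required $t+1$ disjoint monochromatic copies of $G$ in one color once $m$ exceeds a threshold of the form $R_2(G)+t\cdot\chi\,n_G-O(1)$. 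Feeding in $|W|\ge N-O(1)=R_3(H)-O(1)$ together with the matching lower bound for $R_3(H)$ coming from the same block construction (whose slack beyond the first copy is exactly $(\chi-2)(R_2(G)-1)+\sigma-1$) converts the packing condition into precisely the hypothesis $t\le\frac{(\chi-2)(R_2(G)-1)+\sigma-1}{\chi\,n_G}$. This also dovetails with Theorem~\ref{th:chro}, which already yields $f(H,P_5)\le R_{\chi+1}(H)$; the present argument extracts the stronger bound $R_3(H)$ by using the disjoint-union slack to drop the number of effective colors from $\chi+1$ to $3$, and it specializes to the $t=1$ instance of Theorem~\ref{th:union-1}.

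The main obstacle I anticipate is the binding case itself: when no color spans a large clique, the $t+1$ disjoint monochromatic copies must be harvested from a structured $2$-colored region rather than from a clique, and getting the constant in the packing threshold exactly right --- so that $R_2(G)$ sizes the blocks, $\chi-1$ counts them, and $\sigma$ lands in the last block --- is what reproduces the stated bound on $t$ with the chromatic surplus in the correct position. Two further points demand care: verifying that the structure theorem leaves no intermediate type uncovered (in particular the mixed types in which a special vertex's star introduces exactly one extra color over a two-colored remainder), and checking the small-$N$ boundary cases where the $O(1)$ corrections are not yet absorbed by $R_3(H)$.
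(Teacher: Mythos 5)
There is a genuine gap, and it sits exactly at the step you call the ``binding case.'' Your plan rests on the claim that when at least four colors are used (and no near-spanning monochromatic clique exists), the no-rainbow-$P_5$ structure theorem isolates a set $W$ with $|W|\ge N-O(1)$ on which at most two colors appear. That is false. The relevant structure (Lemma~\ref{le:Caseb}, distilled from Thomason--Wagner) is a partition of the vertex set into $k-1$ parts $V_2,\ldots,V_k$ with $C(V_i)\subseteq\{1,i\}$ and all edges between parts of color $1$: each part carries its \emph{own} second color. If $k$ is large and the parts are balanced --- say each $V_i$ induces a clique in color $i$ --- then any vertex set inducing at most two colors meets at most one part in more than one vertex, so it has size at most $\max_i|V_i|+(k-2)\approx N/(k-1)+k$, nowhere near $N-O(1)$. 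Even in the extreme case $k=4$ the largest two-colored set has size about $N/3$, while your packing argument needs roughly $R_2(G)+t\chi(G)|V(G)|\approx N$ vertices to run. The type you do describe (a special vertex whose star adds one color over a two-colored remainder) is among the Thomason--Wagner cases, but it is precisely one that the hypothesis ``no monochromatic $H$, $n\ge R_3(H)$'' eliminates; the partition case is the substantive one, and your argument has no arena in which to operate there.

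The missing idea is how to pass from the many-part structure to a two-color problem, and that passage requires \emph{connectivity}, which is absent from your proposal. Collapsing the coloring to ``color $1$ versus everything else'' does give a $2$-coloring of all of $K_N$, but $t+1$ disjoint copies of $G$ monochromatic in the collapsed second color need not be monochromatic in the original coloring: they may lie in different parts $V_i$, $V_j$ and carry different colors $i\neq j$. What does pull back is a monochromatic \emph{connected} subgraph of the collapsed coloring, since a connected subgraph avoiding color $1$ must stay inside a single part. This is exactly what Lemma~\ref{le:CH} encapsulates: it suffices to prove $R_3(H)\ge R_2(\mathscr{C}(H))$, i.e.\ to find in every genuine $2$-coloring a monochromatic connected graph \emph{containing} $G\cup G_1\cup\cdots\cup G_t$, not merely $t+1$ disjoint same-colored copies of $G$. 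The paper does this (Lemma~\ref{le:union-2}) by combining the Erd\H{o}s--Rado monochromatic spanning tree, greedy extraction of disjoint monochromatic copies of $G$, and Chv\'{a}tal's theorem $R(K_{\chi(G)},T_{t+1})=t(\chi(G)-1)+1$ applied to an auxiliary $2$-colored graph whose vertices are the extracted copies; the hypothesis on $t$ enters precisely to make that count work, and Lemma~\ref{le:R3}~(i) supplies the matching lower bound on $R_3(H)$. Your arithmetic relating $t\le\frac{(\chi(G)-2)(R_2(G)-1)+\sigma(G)-1}{\chi(G)|V(G)|}$ to that lower bound is sound, but without the connectivity reduction the proof does not go through.
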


Note that $R_2(K_r)\geq r^2+1$ for all $r\geq 4$.
Indeed, for $4\leq r\leq 11$, we have $R_2(K_r)\geq r^2+1$ from the known lower bounds on $R_2(K_r)$ (see \cite{Rad});
for $r\geq 12$, we have $R_2(K_r)\geq {r-1\choose 3}\geq r^2+1$ from the well-known Nagy's construction.
Hence, $\frac{(\chi(K_r)-2)(R_2(K_r)-1)+\sigma(K_r)-1}{\chi(K_r)|V(K_r)|}+1\geq \frac{(r-2)r^2}{r^2}+1=r-1$ when $r\geq 4$.
This together with Theorems~\ref{th:union-1} and \ref{th:union-2} implies that $f(tK_r,P_5)= R_{3}(tK_r)$ for $2\leq t\leq r-1$.
Moreover, we shall prove the following corollary of Theorem~\ref{th:union-2} in Section~\ref{sec:proof-subgraph}.

\begin{corollary}\label{cor:union-1}
Let $G, G_1, \ldots, G_t$ be connected graphs with $\chi(G)\geq \frac{t+4+\sqrt{(t+4)^2-12}}{2}$ and $G_1, \ldots, G_t\subseteq G$.
Then $f(G\cup G_1\cup \cdots \cup G_t,P_5)= R_{3}(G\cup G_1\cup \cdots \cup G_t)$.
\end{corollary}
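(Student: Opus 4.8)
The plan is to derive Corollary~\ref{cor:union-1} directly from Theorem~\ref{th:union-2} by checking that the stated lower bound on $\chi(G)$ forces the numerical hypothesis $t\leq \frac{(\chi(G)-2)(R_2(G)-1)+\sigma(G)-1}{\chi(G)|V(G)|}$ of that theorem; everything else is then immediate. Write $r=\chi(G)$ and $n=|V(G)|$, and recall the two trivial facts $n\geq r$ (an $r$-chromatic graph has at least $r$ vertices) and $\sigma(G)\geq 1$.

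First I would unwind the hypothesis. The number $\frac{t+4+\sqrt{(t+4)^2-12}}{2}$ is the larger root of $x^2-(t+4)x+3$, so the assumption $r\geq \frac{t+4+\sqrt{(t+4)^2-12}}{2}$ yields $r^2-(t+4)r+3\geq 0$, that is, $(r-1)(r-3)\geq tr$, or equivalently \[ t\leq \frac{(r-1)(r-3)}{r}. \] In particular this already forces $r\geq 5$ when $t\geq 1$, so all divisions below by $r-1$ and $r$ are legitimate.

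Next I would produce a lower bound on $R_2(G)$ that is uniform in $n$. Since $G$ is connected and $r$-chromatic, the standard Chv\'{a}tal--Harary construction applies: take $r-1$ vertex-disjoint copies of $K_{n-1}$, color the edges inside each copy with color $1$ and all remaining edges with color $2$. The color-$1$ graph has components of order $n-1<|V(G)|$ and so contains no copy of the connected graph $G$, while the color-$2$ graph is complete $(r-1)$-partite and hence cannot contain the $r$-chromatic $G$; thus this $2$-coloring of $K_{(r-1)(n-1)}$ has no monochromatic $G$, giving \[ R_2(G)-1\geq (r-1)(n-1). \] Together with $\sigma(G)\geq 1$ this yields \[ \frac{(r-2)(R_2(G)-1)+\sigma(G)-1}{rn}\geq \frac{(r-2)(r-1)(n-1)}{rn}, \] so it remains only to check that the right-hand side is at least $\frac{(r-1)(r-3)}{r}$. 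Cancelling the common factor $\frac{r-1}{r}$ reduces this to $(r-2)(n-1)\geq (r-3)n$, i.e.\ to $n\geq r-2$, which holds because $n\geq r$. Chaining these inequalities confirms the hypothesis of Theorem~\ref{th:union-2}, which then gives $f(G\cup G_1\cup\cdots\cup G_t,P_5)=R_3(G\cup G_1\cup\cdots\cup G_t)$.

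The step I expect to matter most is the choice of lower bound on $R_2(G)$: it must be clean and, crucially, independent of $n$, since the final reduction leaves only the trivial inequality $n\geq r-2$ to verify. The Chv\'{a}tal--Harary estimate is exactly strong enough here, as the factor $r-2$ in Theorem~\ref{th:union-2} together with $n\geq r$ absorbs the loss from the $\frac{n-1}{n}$ term; a bound on $R_2(G)$ scaling more weakly in $r$ would fail to match $\frac{(r-1)(r-3)}{r}$, so the only thing to watch is that no strength in $r$ is thrown away.
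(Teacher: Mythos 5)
Your proof is correct and follows essentially the same route as the paper: both derive the corollary by verifying the numerical hypothesis of Theorem~\ref{th:union-2}, using a lower bound on $R_2(G)$ of the form $(\chi(G)-1)(|V(G)|-1)+O(1)$ together with $|V(G)|\geq \chi(G)$ and the quadratic condition $\chi(G)^2-(t+4)\chi(G)+3\geq 0$. The only cosmetic difference is that you prove the Chv\'{a}tal--Harary-type bound $R_2(G)-1\geq(\chi(G)-1)(|V(G)|-1)$ from scratch, whereas the paper cites the slightly stronger Burr bound of Lemma~\ref{le:R2}~(i); the extra $\sigma(G)-1$ term is not needed, so your weaker bound suffices and the two arguments reduce to the same elementary inequality.
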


\subsection{Graphs of chromatic number 3}
\label{subsec:3}

In this subsection, we focus on graphs of chromatic number 3.
The first result concerns disconnected graphs whose components are either 3-color-critical or bipartite.

%\begin{theorem}\label{th:critical}
%For any $1\leq s\leq t$, let $G_1, \ldots, G_s$ be $s$ connected 3-color-critical graphs and $G_{s+1}, \ldots, G_t$ be $t-s$ connected bipartite graphs.
%Then $f(G_1\cup \cdots \cup G_t,P_5)= R_{3}(G_1\cup \cdots \cup G_t)$.
%\end{theorem}
%
%Note that $t-s=0$ is possible in Theorem~\ref{th:critical}.

\begin{theorem}\label{th:critical}
For any graph $H$ whose components are either 3-color-critical or bipartite, we have $f(H,P_5)= R_{3}(H)$.
\end{theorem}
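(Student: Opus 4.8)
The lower bound $f(H,P_5)\ge R_3(H)$ is immediate from Inequality~(\ref{eq:lower bound}), so the entire task is the upper bound $f(H,P_5)\le R_3(H)$. Write $H=F_1\cup\cdots\cup F_a\cup B_1\cup\cdots\cup B_b$, where each $F_i$ is $3$-color-critical and each $B_j$ is bipartite, and put $h\colonequals|V(H)|$. If $a=0$ then $H$ is bipartite and the statement is already contained in Theorem~\ref{th:LBMWW-1}; hence I will assume $a\ge 1$, so that $\chi(H)=3$ and, in particular, $H$ is disconnected. Set $n=R_3(H)$ and fix an edge-coloring $c$ of $K_n$ with no rainbow $P_5$; the goal is to exhibit a monochromatic copy of $H$.

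The engine is the structural description of rainbow-$P_5$-free colorings (Thomason--Wagner). If $c$ uses at most three colors, then $c$ is a $3$-edge-coloring of $K_{R_3(H)}$ and a monochromatic $H$ exists by the definition of $R_3(H)$; so I may assume at least four colors appear, in which case $c$ takes one of a few highly structured forms. All but one are disposed of at once: if some vertex $v$ has $K_n-v$ monochromatic, then that color contains $K_{n-1}$ with $n-1\ge h$, hence $H$; and if $c$ is monochromatic except on the three edges of a rainbow triangle $xyz$, then deleting $x$ leaves the dominant color as $K_{n-1}-yz$, and since the disconnected graph $H$ has a non-edge I may embed $H$ with that non-edge placed on $yz$, producing a monochromatic $H$ again.

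The one substantial case is the \emph{partition form}: there are a dominant color $\alpha$ and a partition $V(K_n)=V_1\cup\cdots\cup V_s$ with $s\ge 3$ such that every edge joining two different parts has color $\alpha$, while each $V_i$ is internally monochromatic in a color of its own. Then the color-$\alpha$ graph is the complete multipartite graph $K_{n_1,\dots,n_s}$ (with $n_i=|V_i|$), and each remaining color class is a single clique $K_{n_i}$. If some $n_i\ge h$, that clique contains $H$; so assume $n_i\le h-1$ for every $i$, with $\sum_i n_i=n=R_3(H)$. It then remains to embed $H$ into $K_{n_1,\dots,n_s}$, that is, to assign each vertex of $H$ to a part so that adjacent vertices of $H$ lie in different parts and part $j$ receives at most $n_j$ vertices. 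Here the component hypothesis enters decisively. For a $3$-color-critical component $F$ with critical edge $uv$, every proper $2$-coloring of $F-uv$ places $u$ and $v$ on the same side (otherwise it would properly $2$-color $F$); hence $F$ has a proper $3$-coloring whose third class is the single vertex $\{v\}$. Therefore $H$ admits a proper $3$-coloring in which the third class has size only $a$ (one vertex per $3$-color-critical component), the bipartite components use just two classes, and the remaining $h-a$ vertices are distributed between the first two classes according to the bipartitions of the $F_i-u_iv_i$ and the $B_j$.

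The crux is then to fit this $3$-coloring into the part sizes: one must group the parts $V_1,\dots,V_s$ into three bins whose capacities dominate the three class sizes, subject to $n_i\le h-1$ and $\sum_i n_i=R_3(H)$. I expect the main obstacle to be exactly this bin-covering feasibility when $s$ is small and the $n_i$ are unbalanced, for then there are few classes to absorb the color-class sizes. The plan to resolve it is to compare $(n_1,\dots,n_s)$ with the extremal complete-multipartite construction realizing the lower bound for $R_3(H)$: since the total $R_3(H)$ strictly exceeds that bound, the deficiency that would block the assignment is smaller than the slack provided by the small third class (of size $a$) and by the chromatic surplus $\sigma(H)$, which together force a feasible grouping. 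Making this quantitative comparison, and verifying it across all admissible part-size profiles, is the technical heart of the argument; the reductions above are comparatively routine.
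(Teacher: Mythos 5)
Your argument collapses at the structural step, and the error propagates through everything that follows. The Thomason--Wagner structure for rainbow-$P_5$-free colorings with at least four colors, as encapsulated in Lemma~\ref{le:Caseb}, does \emph{not} say that each part is internally monochromatic in its own color: it gives a partition $V_2,\dots,V_k$ with $\{i\}\subseteq C(V_i)\subseteq\{1,i\}$, so every part may contain an \emph{arbitrary mixture} of the dominant color $1$ and its private color $i$. Two consequences. First, your reduction ``if some $n_i\ge h$ then that clique contains $H$'' is false: a part is internally an arbitrary $2$-coloring, so it can have size up to $R_2(H)-1\gg |V(H)|$ while containing no monochromatic $H$ in either color; hence you cannot assume all parts have size at most $h-1$, and in the genuinely hard configurations one part is huge. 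Second, the color-$1$ graph is not $K_{n_1,\dots,n_s}$ but that graph \emph{plus} unknown color-$1$ edges inside the parts, and the paper's proof of this theorem succeeds precisely by exploiting those internal edges: since $F[V_2]$ is $2$-colored (colors $1$ and $2$) and contains no monochromatic $H$ in color $2$, Corollary~\ref{cor:matching} (via $R(sK_2,H)=|V(H)|+s-1$) forces a color-$1$ matching inside $V_2$, and those matching edges are used as the critical edges of the $3$-color-critical components, the rest of $H$ being completed by between-part color-$1$ edges; this is then combined with the lower bound $R_3(H)\ge 2|V(H)|+s-2$ from Lemma~\ref{le:R3}~(iv) in a long case analysis on $|V_2|$ and $|V_3|$. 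Your plan has no substitute for this mechanism, because in your (incorrect) structure there are no color-$1$ edges inside parts at all.

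Even if the structure were as you state, the proposal would still be incomplete where it matters: the ``bin-covering feasibility'' step --- grouping parts into three bins whose capacities dominate the class sizes of a proper $3$-coloring of $H$ --- is exactly the technical heart, and you only announce a plan (``I expect the main obstacle\dots'', ``The plan to resolve it is\dots'') rather than prove anything. That deferred step roughly corresponds to Case~2 of the paper's proof (the regime $|V_2|\le |V(H)|-1$, handled via the quantities $x=\max\{\sum_i a_i-s,\sum_i b_i\}$ and $y=\min\{\sum_i a_i-s,\sum_i b_i\}$ and a sequence of claims bounding $|V_2|$ and $|V_3|$), but Case~1 (one large, internally $2$-colored part) is entirely absent from your proposal, and it is the case that actually requires the critical-edge/matching idea rather than a pure multipartite embedding.
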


As an immediate consequence, we have $f(tG,P_5)= R_{3}(tG)$, where $t\geq 2$ and $G$ is a 3-color-critical graph.
Moreover, since odd cycles are 3-color-critical graph and even cycle are bipartite graphs, we have the following corollary.

\begin{corollary}\label{cor:cycles}
Let $H$ be any disjoint union of cycles. Then $f(H,P_5)= R_{3}(H)$.
\end{corollary}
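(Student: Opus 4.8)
The plan is to reduce Corollary~\ref{cor:cycles} directly to Theorem~\ref{th:critical} by verifying that every cycle satisfies the structural hypothesis required there, namely that each component is either 3-color-critical or bipartite. First I would split the components of $H$ according to the parity of their length. Every component of $H$ is a cycle $C_n$, and such a cycle is bipartite precisely when $n$ is even, since a proper 2-coloring is obtained by alternating two colors around the cycle. This disposes of all even-length components at once.

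For the odd components, I would verify the 3-color-critical property straight from its definition. An odd cycle $C_{2k+1}$ has chromatic number exactly $3$: it is not bipartite (it contains an odd closed walk, so two colors cannot suffice), yet three colors do suffice, as one may color the vertices alternately with two colors and assign the third color to the single remaining vertex. To check criticality, delete any one edge; the result is a path on $2k+1$ vertices, which is bipartite and hence has chromatic number $2 = 3-1$. Thus every edge of $C_{2k+1}$ is a critical edge, and every odd cycle is 3-color-critical.

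Combining the two cases, every component of a disjoint union of cycles $H$ is either 3-color-critical (when its length is odd) or bipartite (when its length is even), so $H$ meets the hypothesis of Theorem~\ref{th:critical}. Applying that theorem then yields $f(H,P_5)=R_{3}(H)$ immediately.

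Since the argument is nothing more than a case analysis on parity followed by an appeal to the already-established Theorem~\ref{th:critical}, there is no genuine obstacle to overcome. The only point requiring even minimal care is confirming the criticality condition for odd cycles, and this reduces to the elementary observation that deleting an edge from an odd cycle produces a bipartite path.
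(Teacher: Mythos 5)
Your proposal is correct and matches the paper's own justification exactly: the paper derives this corollary from Theorem~\ref{th:critical} by precisely the observation that odd cycles are 3-color-critical and even cycles are bipartite. Your extra verification that \emph{every} edge of an odd cycle is critical is slightly more than the definition requires (one critical edge suffices), but it is harmless.
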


Recall that $\sigma(G)$ is the chromatic surplus of a graph $G$.
Given a graph $G$ with $\chi(G)\leq 3$, let $\sigma_3(G)\colonequals \sigma(G)$ if $\chi(G)= 3$, and $\sigma_3(G)\colonequals 0$ otherwise.
In the following, we present two results related to $\sigma(G)$ or $\sigma_3(G)$.

\begin{theorem}\label{th:balanced}
For any $1\leq s\leq t$, let $G_1, \ldots, G_s$ be $s$ connected graphs of chromatic number 3 and $G_{s+1}, \ldots, G_t$ be $t-s$ connected bipartite graphs.
Suppose that for each $i\in [t]$, $G_i$ admits a proper $3$-vertex-coloring with color classes of sizes $a_i \geq b_i \geq c_i=\sigma_3(G_i)$ such that
\begin{itemize}
\item[{\rm (i)}] $a_i-b_i\leq 1$ for each $i\in [s]$, $a_i=b_i$ for each $i\in \{s+1, \ldots, t\}$, and
\item[{\rm (ii)}] $b_i+c_i\geq \sum_{j\in [t]\setminus \{i\}} c_j$ for each $i\in [t]$.
\end{itemize}
Then $f(G_1\cup \cdots \cup G_t,P_5)= R_{3}(G_1\cup \cdots \cup G_t)$.
\end{theorem}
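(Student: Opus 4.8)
The bound $f(H,P_5)\ge R_3(H)$ is immediate from inequality~(\ref{eq:lower bound}), so the entire task is the matching upper bound $f(H,P_5)\le R_3(H)$. Write $H=G_1\cup\cdots\cup G_t$ and $m=|V(H)|$, fix $N=R_3(H)$, and let $c$ be any edge-coloring of $K_N$ with no rainbow $P_5$; I must exhibit a monochromatic $H$. My plan is to feed $c$ into the structural description of rainbow-$P_5$-free colorings of a complete graph (Thomason--Wagner, in the form underlying Theorem~\ref{th:LBMWW-1}): after relabeling colors, one of the following holds: (a) $c$ uses at most three colors; (b) there is a vertex $v$ such that $K_N-v$ is monochromatic; or (c) there is a partition $V(K_N)=V_1\cup\cdots\cup V_p$ in which all edges between distinct parts carry a single common color $\delta$, while each $V_j$ is internally colored with no rainbow $P_4$. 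Only case (c) can beat the trivial three-color bound, and it is where hypotheses (i) and (ii) must do their work.

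Cases (a) and (b) are quick. In (a), $c$ is a $3$-edge-coloring of $K_{R_3(H)}$, so a monochromatic $H$ exists by the definition of $R_3(H)$. In (b), the color class of $K_N-v$ is a monochromatic clique on $N-1=R_3(H)-1$ vertices, and it already contains $H$ provided $R_3(H)-1\ge m$; thus (b) reduces to the elementary inequality $R_3(H)\ge m+1$. This holds for every $H$ with $\chi(H)\ge 3$: split $m$ vertices into two almost-equal parts $X$ and $Y$ and color the edges inside $X$, inside $Y$, and between $X$ and $Y$ with three distinct colors. The two monochromatic cliques have fewer than $m$ vertices, while the third color class is complete bipartite, hence triangle-free, whereas $H\supseteq G_1$ contains an odd cycle; so this $3$-coloring of $K_m$ has no monochromatic $H$, giving $R_3(H)>m$.

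The heart is case (c). I would first reduce to internally monochromatic parts: a rainbow-$P_4$-free coloring of a clique is, up to a bounded correction, a $2$-coloring, and any part large enough to force a monochromatic $H$ inside it (size at least $R_2(H)$) is handled directly, so one may assume each $V_j$ is a monochromatic clique with its own color. Then $\delta$ spans the complete multipartite graph $K_{|V_1|,\dots,|V_p|}$, each $V_j$ is a monochromatic clique, and $\sum_j|V_j|=N=R_3(H)$. Suppose there were no monochromatic $H$. Then every part satisfies $|V_j|\le m-1$ and the class $\delta$ does not contain $H$, so everything reduces to the following purely extremal statement about multipartite embeddings: if $s_1,\dots,s_p$ satisfy $s_j\le m-1$ for every $j$ and $K_{s_1,\dots,s_p}\not\supseteq H$, then $\sum_j s_j\le R_3(H)-1$. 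Proving this contradicts $\sum_j|V_j|=R_3(H)$ and completes the argument.

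To establish the extremal statement I would analyze exactly when $H$ embeds into a complete multipartite graph. Since $H\subseteq K_{s_1,\dots,s_p}$ amounts to partitioning $V(H)$ into independent sets whose sizes fit the profile $(s_1,\dots,s_p)$, and since by (i) each $3$-chromatic $G_i$ splits into two almost-equal classes $a_i\ge b_i$ together with its surplus $c_i=\sigma_3(G_i)$ (each bipartite $G_i$ into two equal classes), an $H$-avoiding profile is forced to keep its smallest coordinate below the total surplus $C=\sum_i c_i$; condition (ii), namely $b_i+c_i\ge\sum_{j\ne i}c_j$, is exactly what forbids the larger classes of any one component from absorbing the combined surpluses of the others, pinning the extremal $H$-avoiding profile down to ``two parts of size $m-1$ plus a controlled surplus remainder.'' One clean way to finish is to convert any $H$-avoiding multipartite structure into an honest $3$-coloring of $K_{\sum_j s_j}$ with no monochromatic $H$ --- keep $\delta$ as one color and, using (i) and (ii), split the part interiors into two balanced classes neither of which contains the odd-cycle-bearing component $G_1$ --- whence $\sum_j s_j\le R_3(H)-1$ by the definition of $R_3(H)$. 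I expect this embedding/extremal step to be the main obstacle: case (c) permits arbitrarily many parts, hence arbitrarily many colors, and each component may be properly $3$-colored in ways other than the prescribed $(a_i,b_i,c_i)$, so making the packing uniform over all part-profiles and all admissible class assignments --- with (i) governing the two large coordinates and (ii) the small one --- is precisely where both numerical hypotheses are consumed in full.
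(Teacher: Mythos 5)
There is a genuine gap, and it occurs exactly at the step you yourself flag as ``the main obstacle.'' First, your reduction in case (c) to internally monochromatic parts is invalid. In the actual structure (the paper's Lemma~\ref{le:Caseb}, derived from Thomason--Wagner), each part $V_i$ is internally colored with the \emph{two} colors $\{1,i\}$, where color $1$ is the same color that covers all edges between parts. You cannot replace the interior of a part by a fresh single color: doing so may create a monochromatic $H$ that the original coloring does not have, so it tells you nothing about the original coloring. Without that replacement, the honest per-part bound coming from ``no monochromatic $H$'' is $|V_j|\le R_2(H)-1$, not $|V_j|\le |V(H)|-1$, and moreover the color-$1$ edges \emph{inside} parts interact with the between-parts color-$1$ graph, so the problem does not cleanly reduce to your purely multipartite extremal statement at all. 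Second, even granting your (unjustified) profile $s_j\le m-1$, the extremal claim ``$K_{s_1,\ldots,s_p}\not\supseteq H$ and $s_j\le m-1$ for all $j$ imply $\sum_j s_j\le R_3(H)-1$'' is never proved; you only sketch intentions. The one concrete route you offer --- converting an $H$-avoiding multipartite profile into a $3$-coloring of $K_{\sum s_j}$ by splitting part interiors into two color classes --- breaks down precisely because $H$ is disconnected: each interior color class is a \emph{union of cliques across many parts}, and such a union can contain $H$ (packing one component per clique) even when every single part has fewer than $|V(H)|$ vertices. So the conversion cannot certify $\sum_j s_j\le R_3(H)-1$ in general.

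For contrast, the paper's proof never needs either of these claims. After invoking Lemma~\ref{le:Caseb} (with $|V_2|$ maximal), it merges all colors in $[k]\setminus[3]$ into color $3$; since the order is $R_3(H)$, the merged coloring has a monochromatic $H$, and because the original coloring has none, every component of that copy must lie inside a single part $V_i$ with $i\ge 3$, spread over at least two such parts. This localization yields strong \emph{lower} bounds on the sizes of the parts $V_3,\ldots,V_\ell$ (each is at least the total order of the components it hosts) and on $|V_2|$. The proof then embeds $H$ into the color-$1$ complete multipartite graph by a cyclic shift: the $a$-classes of the components hosted by $V_i$ go into $V_i$, their $b$-classes into $V_{i-1}$, and their $c$-classes into $V_2$ (or $V_\ell$ when $i=3$); hypothesis (i) (via $b_i\le a_i\le \frac12|V(G_i)|$) and hypothesis (ii) (via $b_i+c_i\ge\sum_{j\ne i}c_j$) are exactly what make these subsets fit disjointly. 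Your proposal contains no substitute for this localization step or for the shift embedding, so as written it does not establish the theorem.
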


%\begin{theorem}\label{th:K3}
%Let $G_1$, $G_2$ be two connected graphs of chromatic number at most 3 with $\min\{|V(G_1)|, |V(G_2)|\}\geq \sigma_3(G_1)+\sigma_3(G_2)$ and $K_3\subseteq G_1$ or $K_3\subseteq G_2$.
%Then $f(G_1\cup G_2,P_5)= R_{3}(G_1\cup G_2)$.
%\end{theorem}

\begin{theorem}\label{th:one3}
Let $H$ be a 3-chromatic graph satisfying
\begin{itemize}
\item[{\rm (i)}] $H$ has exactly one component of chromatic number 3, and
\item[{\rm (ii)}] every component of $H$ has order at least $\sigma(H)$.
\end{itemize}
Then $f(H,P_5)= R_{3}(H)$.
\end{theorem}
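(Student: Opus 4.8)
The lower bound $f(H,P_5)\ge R_3(H)$ is supplied by Inequality~(\ref{eq:lower bound}), so the whole task is the matching upper bound. Writing $N=R_3(H)$, I would fix an arbitrary edge-colouring $c$ of $K_N$ with no rainbow $P_5$ and prove that it contains a monochromatic copy of $H$. The engine of the proof is the structural description of rainbow-$P_5$-free colourings that underlies all the theorems of this section: up to relabelling, $c$ either (1) uses at most three colours, or (2) exhibits a dominant colour that covers every edge outside a bounded vertex set (in particular a monochromatic clique on all but boundedly many vertices), or (3) is built by substituting smaller rainbow-$P_5$-free colourings into a reduced base colouring on few parts that itself uses only a handful of colours. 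Cases (1) and (2) are quickly disposed of: in (1) the definition of $R_3(H)=N$ already yields a monochromatic $H$, while in (2) the monochromatic clique has at least $N-O(1)$ vertices, and since a $3$-chromatic graph satisfies $R_3(H)>|V(H)|$ with room to absorb the bounded deficit, that clique already hosts $H$.

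The substance lies in case (3), the genuine blow-up regime, where each colour class is (essentially) a complete multipartite graph and no single colour spans a clique large enough to contain $H$ on its own. Here I would exploit the component structure dictated by the hypotheses. By (i) there is a unique component $H_0$ with $\chi(H_0)=3$ and all remaining components are bipartite; correspondingly $\sigma(H)$ is controlled by $H_0$. A monochromatic copy of $H$ in colour $j$ exists exactly when $H$ is a subgraph of the complete multipartite graph induced by colour $j$, i.e.\ when $H$ admits a proper colouring whose class sizes fit inside the parts of that colour. The plan is therefore to convert ``monochromatic $H$'' into a packing problem and to show that the part sizes forced by $N=R_3(H)$ make the packing feasible for at least one colour.

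To run the packing I would first place a surplus $3$-colouring of $H_0$ --- the class of size $\sigma(H)$ being the bottleneck --- into three of the available parts, and then distribute the bipartite components over two parts each. Hypothesis (i) is precisely what keeps the problem tractable: only one component genuinely demands a $3$-partite host, so once $H_0$ is placed the remaining components can be assigned greedily using only two classes at a time. Hypothesis (ii), that every component has order at least $\sigma(H)$, is what the surplus counting needs: it prevents a component from being too small to be absorbed while the smallest class is held at size $\sigma(H)$, so that the cumulative demand of the bipartite components can be balanced against the slack guaranteed by $N=R_3(H)$. A comparison of $N$ with the total vertex demand across the (at most three) effective classes should then force some colour class to swallow all of $H$.

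I expect the main obstacle to be exactly this packing step in the blow-up case, and specifically the interaction between the parts rather than any single embedding. One cannot embed the components independently --- the part sizes are shared across all components --- so the argument must simultaneously reconcile the $\sigma(H)$-sized smallest class demanded by $H_0$ with the aggregate size of all the bipartite components, which is what (i) and (ii) are calibrated to permit. The real work will be to verify that $N=R_3(H)$ supplies enough room in \emph{every} branch of the structural dichotomy, and not merely generically; in the tight branches, where the reduced structure offers only three effective colours and the part sizes sit near the extremal threshold, this forces a careful, case-by-case accounting tied to the surplus $\sigma(H)$.
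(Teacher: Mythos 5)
Your high-level skeleton does coincide with the paper's (lower bound from Inequality~(\ref{eq:lower bound}); upper bound via the structure of rainbow-$P_5$-free colourings; embed the unique $3$-chromatic component first, then the bipartite components, with hypotheses (i) and (ii) feeding the counting), but the proposal has a genuine gap on two counts. First, the structural input you invoke is misremembered. The paper does not use a trichotomy featuring a ``near-spanning monochromatic clique'' case or a recursive substitution structure; for $n\geq R_3(H)$ it uses Lemma~\ref{le:Caseb+}: the vertex set splits into parts $V_2,\ldots,V_k$ ($k\geq 4$) such that inside $V_i$ only colours $1$ and $i$ appear, while \emph{all} edges between distinct parts receive colour $1$, and moreover $|V_2|\geq\cdots\geq|V_k|$, $|V_3|\geq\max_i|V(H_i)|$ and $|V_3\cup\cdots\cup V_k|\geq|V(H)|$. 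In particular only colour $1$ induces a complete multipartite graph; each colour $i\geq 2$ is an arbitrary graph confined to a single part, so your reduction ``a monochromatic $H$ in colour $j$ exists exactly when $H$ packs into the complete multipartite graph induced by colour $j$'' is false for $j\geq 2$, and the entire packing must be carried out in colour $1$ alone.

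Second, and more seriously, the step you defer --- ``a comparison of $N$ with the total vertex demand \ldots should then force some colour class to swallow all of $H$'' --- is precisely where all of the paper's work lies, and it is not a routine greedy or volume count. After placing a colour-$1$ copy of $K_{s_1,s_2,s_3}\supseteq H_1$ on $X\subseteq V_2$, $Y\subseteq V_3$, $Z\subseteq V_4\cup\cdots\cup V_k$, one must find a colour-$1$ copy of $K_{t_1,t_2}$ disjoint from $X\cup Y\cup Z$ (Fact~\ref{fa:one3}), where the part sizes are adversarial. The paper needs: the lower bound $R_3(H)\geq 2|V(H)|+\sigma(H)-2$ from Lemma~\ref{le:R3}~(iv); the upper bounds $|V_2|\leq s_2+t_2-1$ and $|V_2|+|V_3|\leq s_1+s_2+t_2-1$ (Claims~\ref{cl:one3-1} and \ref{cl:one3-2}), each proved by exhibiting a forbidden $K_{t_1,t_2}$; the consequence $|V_4\cup\cdots\cup V_k|\geq s_1+s_2+3s_3+2t_1+t_2-1$, which forces $k\geq 5$; and finally a balanced-bipartition argument on $\{V_4,\ldots,V_k\}$: choose $I\subseteq\{4,\ldots,k\}$ splitting $V_4\cup\cdots\cup V_k$ into $A_1,A_2$ with $|A_1|\geq|A_2|$ and $|A_1|-|A_2|$ minimum, note that $|A_2|\geq t_2$ yields the forbidden $K_{t_1,t_2}$ between $A_1\setminus Z$ and $A_2$, while $|A_2|<t_2$ contradicts the minimality of $|A_1|-|A_2|$ via a part-swapping computation. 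This exchange argument is the crux --- it is what lets the bipartite demand $t_1,t_2$ be split across the many unknown small parts --- and nothing in your proposal supplies it or a substitute for it; your own closing paragraph concedes as much.
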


%Note that if $H$ has exactly one component $H'$ of chromatic number 3, then $\sigma(H)=\sigma(H')$.
%Thus by Theorem~\ref{th:one3}, we have the following corollary.
%
%\begin{corollary}\label{cor:one3}
%Let $G_1$, $G_2$ be two connected graphs with $\chi(G_1)=2$, $\chi(G_2)=3$ and $|V(G_1)|\geq \sigma(G_2)$.
%Then $f(G_1\cup G_2,P_5)= R_{3}(G_1\cup G_2)$.
%\end{corollary}

If $H$ is a graph with $\chi(H)=3$ and $\sigma(H)=1$, then $H$ has exactly one component of chromatic number 3.
Thus by Theorem~\ref{th:one3}, we have the following corollary.

\begin{corollary}\label{cor:sigma1}
Let $H$ be a graph with $\chi(H)=3$ and $\sigma(H)=1$. Then $f(H,P_5)= R_{3}(H)$.
\end{corollary}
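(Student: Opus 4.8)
The plan is to derive Corollary~\ref{cor:sigma1} directly from Theorem~\ref{th:one3} by verifying its two hypotheses for a graph $H$ with $\chi(H)=3$ and $\sigma(H)=1$. Hypothesis (ii) is immediate: every component of $H$ contains at least one vertex, hence has order at least $1=\sigma(H)$. The substance of the argument therefore lies entirely in establishing hypothesis (i), namely that $H$ has exactly one component of chromatic number $3$.

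To prove (i), I would first record that the chromatic number of a graph equals the maximum chromatic number among its components, so the assumption $\chi(H)=3$ guarantees at least one component of chromatic number $3$. It then remains to rule out two or more such components. Suppose, for contradiction, that $H$ has two distinct components $C_1$ and $C_2$ with $\chi(C_1)=\chi(C_2)=3$, and fix an arbitrary proper $3$-vertex-coloring of $H$. Restricted to $C_1$ this is a proper coloring of a $3$-chromatic graph, so all three colors appear on $C_1$; the same holds for $C_2$. Consequently, for each color $j\in[3]$, the color class of $j$ in $H$ contains at least one vertex from $C_1$ and at least one from $C_2$, and so has size at least $2$. Since this bound holds for every color and every proper $3$-coloring, we obtain $\sigma(H)\geq 2$, contradicting $\sigma(H)=1$. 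Hence exactly one component of $H$ has chromatic number $3$, which is (i).

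With both hypotheses verified, Theorem~\ref{th:one3} yields $f(H,P_5)=R_3(H)$, completing the proof. I expect no genuine obstacle here, as the corollary is a short deduction. The only point requiring a little care is that $\sigma(H)$ is defined as a minimum over \emph{all} proper $3$-colorings, so the inequality $\sigma(H)\geq 2$ must be argued uniformly: the claim that each color class meets both $C_1$ and $C_2$ has to hold for every proper $3$-coloring, which it does precisely because each of $C_1$ and $C_2$, being $3$-chromatic, is forced to exhibit all three colors in any such coloring. The independence of the colorings across distinct components (each may be recolored by an arbitrary permutation of the three colors) does not help reduce a color class below size $2$ once two $3$-chromatic components are present.
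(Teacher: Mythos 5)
Your proposal is correct and follows exactly the paper's route: the paper also deduces the corollary from Theorem~\ref{th:one3} by noting that $\sigma(H)=1$ forces $H$ to have exactly one $3$-chromatic component (hypothesis (i)), with hypothesis (ii) trivial since $\sigma(H)=1$. Your only addition is spelling out the short contradiction argument (two $3$-chromatic components would force every color class in every proper $3$-coloring to have size at least $2$), which the paper leaves implicit.
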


The remainder of this paper is organized as follows.
In the next section, we provide some results that will be used in our proofs.
In Section~\ref{sec:proof-Rk}, we prove Theorems~\ref{th:chro} and \ref{th:homology}.
In Section~\ref{sec:proof-subgraph}, we present our proofs of Theorems~\ref{th:union-1}, \ref{th:union-2} and Corollary~\ref{cor:union-1}.
%In Section~\ref{sec:proof-3}, We prove Theorems~\ref{th:critical}, \ref{th:balanced}, \ref{th:K3} and \ref{th:one3}.
In Section~\ref{sec:proof-3}, we prove Theorems~\ref{th:critical}, \ref{th:balanced} and \ref{th:one3}.
Finally, in Section~\ref{sec:conclu}, we conclude this paper by presenting several open problems and making some remarks to illustrate the difficulty in solving Conjecture~\ref{conj:P5},
and in particular, we present several new results for a bipartite variation of the problem.

\section{Preliminaries}
\label{sec:pre}

Firstly, we point out that in order to study Conjecture~\ref{conj:P5}, it suffices to focus on graphs without isolated vertices.
This follows from the following proposition.

\begin{proposition}\label{prop:isolated}
Let $H'$ be obtained from a graph $H$ by adding an isolated vertex.
If $f(H, P_5)=R_3(H)$, then $f(H', P_5)=R_3(H')$.
\end{proposition}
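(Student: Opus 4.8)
The plan is to prove the two inequalities $f(H',P_5)\ge R_3(H')$ and $f(H',P_5)\le R_3(H')$ separately; the first is free and all the (modest) content lies in the second. For the lower bound I would simply invoke the general inequality~(\ref{eq:lower bound}) applied to $H'$, which gives $f(H',P_5)\ge R_3(H')$ with no further work.

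For the upper bound I would first record how $R_3$ reacts to the addition of a single isolated vertex. Since $H$ is a subgraph of $H'$, every monochromatic copy of $H'$ contains a monochromatic copy of $H$ (delete the isolated vertex), so any $3$-coloring that forces $H'$ also forces $H$, whence $R_3(H')\ge R_3(H)$. Moreover $K_n$ contains no copy of $H'$ whatsoever when $n<|V(H')|$, so $R_3(H')\ge |V(H')|=|V(H)|+1$. Combining these with the hypothesis $f(H,P_5)=R_3(H)$, the single integer $n\colonequals R_3(H')$ satisfies both $n\ge f(H,P_5)$ and $n\ge |V(H)|+1$.

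The core of the proof is then a direct extension argument. I would take an arbitrary edge-coloring of $K_n$ (with any number of colors) that contains no rainbow $P_5$ and aim to produce a monochromatic $H'$. Because $n\ge f(H,P_5)$, the absence of a rainbow $P_5$ forces a monochromatic copy of $H$, say in color $i$ on a vertex set $S$ with $|S|=|V(H)|$. Since $n\ge |V(H)|+1>|S|$, there is a vertex $v\in V(K_n)\setminus S$. The crucial observation is that the extra vertex of $H'$ is isolated and therefore imposes no constraint on edge colors: the copy of $H$ on $S$ together with $v$ (and no edges at $v$) is a subgraph isomorphic to $H'$ all of whose edges have color $i$, i.e.\ a monochromatic $H'$. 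Hence every such coloring of $K_n$ contains a monochromatic $H'$ or a rainbow $P_5$, giving $f(H',P_5)\le n=R_3(H')$ and, with the lower bound, the desired equality.

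I expect no genuine obstacle here, as the argument is a short counting reduction rather than a structural result; its only real content is recognizing that an isolated vertex places no demand on the coloring, so a spare vertex may always be appended to a monochromatic $H$. The single point deserving care is checking that $n=R_3(H')$ simultaneously dominates both $f(H,P_5)$ and $|V(H)|+1$, which is exactly what the two monotonicity facts for $R_3$ established above guarantee.
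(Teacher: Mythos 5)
Your proposal is correct and follows essentially the same argument as the paper: both reduce to the upper bound via Inequality~(\ref{eq:lower bound}), use the monotonicity facts $R_3(H')\geq R_3(H)=f(H,P_5)$ and $R_3(H')\geq |V(H')|$, and then extend a monochromatic copy of $H$ in an edge-colored $K_{R_3(H')}$ without rainbow $P_5$ by one spare vertex to obtain a monochromatic $H'$. The only difference is that you spell out the justification of the two monotonicity facts, which the paper states without proof.
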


\begin{proof} By Inequality~(\ref{eq:lower bound}), it suffices to prove that $f(H', P_5)\leq R_3(H')$.
Note that $R_3(H')\geq R_3(H)$ and $R_3(H')\geq |V(H')|$.
Let $F$ be an edge-colored $K_{R_3(H')}$.
Since $R_3(H')\geq R_3(H)=f(H, P_5)$, $F$ contains either a rainbow $P_5$ or a monochromatic $H$.
If $F$ contains a monochromatic $H$, then since $R_3(H')\geq |V(H')|$ and $H'$ is obtained from $H$ by adding an isolated vertex, $F$ also contains a monochromatic $H'$.
Hence, $f(H', P_5)\leq R_3(H')$.
\end{proof}

In the following, we introduce some Ramsey-type results and structural results that will be used in our proofs.

\subsection{Ramsey-type results}
\label{subsec:Ramsey}

For a set $\mathscr{H}$ of graphs, the Ramsey number $R_k(\mathscr{H})$ denotes the minimum integer $n$ such that every $k$-edge-colored $K_{n}$ contains a monochromatic copy of some graph in $\mathscr{H}$.
For a disconnected graph $H$, let $\mathscr{C}(H)$ be the set of all connected graphs containing $H$ as a subgraph (not necessarily spanning or induced).
Li, Besse, Magnant, Wang and Watts~\cite{LBMWW} obtained the following result linking $f(H, P_5)$, $R_3(H)$ and $R_2(\mathscr{C}(H))$.

\begin{lemma}{\normalfont (\cite[Lemma~3]{LBMWW})}\label{le:CH}
For any disconnected graph $H$, if $R_3(H)\geq R_2(\mathscr{C}(H))$, then $f(H, P_5)=R_3(H)$.
\end{lemma}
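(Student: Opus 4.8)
The lower bound $f(H,P_5)\ge R_3(H)$ is immediate from Inequality~(\ref{eq:lower bound}), so the whole task is to prove the matching upper bound $f(H,P_5)\le R_3(H)$ under the hypothesis $R_3(H)\ge R_2(\mathscr{C}(H))$. Concretely, I would fix $n=R_3(H)$, take an arbitrary edge-coloring $c$ of $K_n$ that contains no rainbow $P_5$, and aim to produce a monochromatic copy of $H$. The natural first split is on the number of colors used by $c$. If $c$ uses at most three colors, then by the very definition of $R_3(H)$ the coloring already contains a monochromatic $H$, and we are done; so the real content lies in the case where $c$ uses at least four colors.

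For the four-or-more-color case the plan is to exploit the rigidity forced by the absence of a rainbow $P_5$. I would invoke the structural classification of rainbow-$P_5$-free colorings of complete graphs (in the spirit of Thomason and Wagner): once at least four colors appear, there is a bounded set $S$ of exceptional vertices and a pair of colors, say red and blue, such that every edge of $K_n-S$ receives one of these two colors. In other words, the coloring restricted to $K_n-S$ is a genuine $2$-edge-coloring of a complete graph on at least $n-|S|$ vertices. Each structural family produced by the classification would have to be matched to this conclusion, and this verification is the part requiring care.

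With a large $2$-colored complete subgraph in hand, the hypothesis enters. Since $n=R_3(H)\ge R_2(\mathscr{C}(H))$, the $2$-colored host is (essentially) large enough that, by the definition of $R_2(\mathscr{C}(H))$, it must contain a monochromatic copy of some connected graph $F\in\mathscr{C}(H)$. Because every member of $\mathscr{C}(H)$ contains $H$ as a subgraph, this monochromatic $F$ yields a monochromatic $H$, completing the argument. The family $\mathscr{C}(H)$ is used rather than $H$ itself precisely because its members are \emph{connected}, which is what lets us absorb the exceptional vertices in $S$: after recoloring every edge meeting $S$ with the color red, one obtains an honest $2$-coloring of all of $K_n$ on $n\ge R_2(\mathscr{C}(H))$ vertices; a blue member of $\mathscr{C}(H)$ then automatically avoids $S$ (there are no blue edges at $S$ anymore) and so is genuinely blue in $c$, while a red member can be handled using its connectivity.

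I expect the main obstacle to be exactly this last reduction in the four-or-more-color case. First, one must pin down the structural classification in the precise form ``a bounded exceptional set $S$, two colors elsewhere'' and check each structural family against it. Second, there is the boundary bookkeeping between $n$ and $n-|S|$ against the tight hypothesis $R_3(H)\ge R_2(\mathscr{C}(H))$: one must guarantee that a monochromatic connected supergraph of $H$, which may run through the vertices of $S$, can always be converted into a genuine monochromatic $H$ in the original coloring $c$. Connectivity of the members of $\mathscr{C}(H)$ is the key feature that makes this conversion possible, and it is the reason the lemma is naturally phrased in terms of $R_2(\mathscr{C}(H))$ rather than $R_2(H)$.
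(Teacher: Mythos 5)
Your overall strategy---split on the number of colors, reduce the at-least-four-color case to a $2$-coloring, apply $R_2(\mathscr{C}(H))\le R_3(H)$, and use connectivity of the members of $\mathscr{C}(H)$ to pull a monochromatic copy back to the original coloring---is the right skeleton, and it is essentially how this lemma is proved in \cite{LBMWW} (the present paper only cites it, but states the needed structure as Lemma~\ref{le:Caseb}). However, there is a genuine gap: the structural classification you invoke is false. A rainbow-$P_5$-free coloring of $K_n$ with at least four colors does \emph{not} in general admit a bounded exceptional set $S$ such that $K_n-S$ is $2$-colored. The actual Thomason--Wagner/\cite{LBMWW} structure (Lemma~\ref{le:Caseb}) is a partition $V_2,\ldots,V_k$ with a dominant color $1$ on all edges between parts and with $C(V_i)\subseteq\{1,i\}$ inside each part; the parts can all be large. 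Concretely, split $V(K_n)$ into three parts of size $n/3$, color all edges inside the three parts with colors $2,3,4$ respectively and all edges between parts with color $1$: a rainbow $P_5$ would need at least three within-part edges of distinct colors, hence at least two between-part edges, both of color $1$, so none exists; yet four colors are used and any set $S$ with only two colors outside it must contain all but one part, so $|S|\geq 2n/3$. Thus your recoloring step (``recolor every edge meeting $S$ red'') has no valid starting point, and your treatment of the ``red'' member of $\mathscr{C}(H)$---which could pass through $S$ on edges of arbitrary original colors---was in any case left unresolved.

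The correct repair, using Lemma~\ref{le:Caseb}, is to merge colors globally rather than around an exceptional set: recolor every edge whose color lies in $\{2,\ldots,k\}$ with a single new color. This is an honest $2$-coloring of all of $K_n$ with $n=R_3(H)\geq R_2(\mathscr{C}(H))$, so it contains a monochromatic connected $F\in\mathscr{C}(H)$, and $F\supseteq H$. If $F$ has color $1$, its edges are untouched by the merging, so $F$ (hence $H$) is genuinely monochromatic, a contradiction. If $F$ has the merged color, then since every edge between distinct parts has color $1$, all edges of $F$ lie within parts, and connectivity of $F$ forces $F$ into a single part $V_i$; inside $V_i$ every non-$1$ edge has the one color $i$, so $F$ is genuinely monochromatic of color $i$, again a contradiction. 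Your instinct that connectivity of the members of $\mathscr{C}(H)$ is the crux is exactly right, but it operates through this localization to a single part of the partition, not through avoiding a bounded exceptional set.
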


%We shall also use the following general lower bound on Ramsey numbers due to Burr~\cite{Burr}.

For two graphs $G_1$ and $G_2$, the Ramsey number $R(G_1, G_2)$ denotes the minimum integer $n$ such that every $2$-edge-colored $K_{n}$ contains either a monochromatic $G_1$ of color 1 or a monochromatic $G_2$ of color 2.
We shall use the following results on 2-colored Ramsey numbers.

%\begin{lemma}{\normalfont (\cite{Burr})}\label{le:Burr}
%For any connected graph $H$, we have $R_2(H)\geq (\chi(H)-1)(|V(H)|-1)+\sigma(H)$.
%\end{lemma}
%
%\begin{lemma}{\normalfont (\cite{Chv})}\label{le:Chv}
%For any tree $T_{t+1}$ on $t+1$ vertices, we have $R(K_m, T_{t+1})=t(m-1)+1$.
%\end{lemma}

\begin{lemma}\label{le:R2}
The following results have been established.
\begin{itemize}
\item[{\rm (i)}]{\normalfont (\cite{Burr})} For any connected graph $H$, we have $R_2(H)\geq (\chi(H)-1)(|V(H)|-1)+\sigma(H)$.
\item[{\rm (ii)}]{\normalfont (\cite{Chv})} For any tree $T_{t+1}$ on $t+1$ vertices, we have $R(K_m, T_{t+1})=t(m-1)+1$.
\item[{\rm (iii)}]{\normalfont (\cite{FaSS})} For any graph $H$ with no isolated vertices, independence number $c$ and such that $K_c\cup K_{\left\lceil c/2 \right\rceil} \subseteq \overline{G}$, we have $R(mK_2, H)=\max\{|V(H)|+2m-c-1, |V(H)|+m-1\}$.
\end{itemize}
\end{lemma}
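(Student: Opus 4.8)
The plan is to treat the three parts as independent classical Ramsey evaluations, since each has a different flavor: parts (i) and (ii) follow from a single construction paired with a short argument, while part (iii) requires a genuine extremal analysis that I expect to be the crux. For part (i), writing $\chi=\chi(H)$, $n=|V(H)|$ and $\sigma=\sigma(H)$, I would exhibit a $2$-edge-coloring of $K_{(\chi-1)(n-1)+\sigma-1}$ with no monochromatic $H$. Partition the vertex set into $\chi-1$ blocks of size $n-1$ together with one extra block of size $\sigma-1$; color edges inside a block red and edges between blocks blue. Each red component then has at most $n-1$ vertices, so the connected graph $H$ cannot appear in red. The blue graph is complete $\chi$-partite, and a copy of $H$ in it would distribute the vertices of $H$ among the parts so that each part receives an independent set, i.e.\ it would induce a proper $\chi$-coloring of $H$. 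Since $\chi(H)=\chi$, all $\chi$ parts must be used; in particular the part of size $\sigma-1$ would host a color class of size at most $\sigma-1$, contradicting the fact that every color class in a proper $\chi$-coloring has size at least $\sigma(H)$. This shows $R_2(H)>(\chi-1)(n-1)+\sigma-1$, which is exactly the stated bound.

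For part (ii), the lower bound comes from partitioning $K_{t(m-1)}$ into $m-1$ cliques of order $t$, coloring within-clique edges with the $T_{t+1}$-color and across-clique edges with the $K_m$-color: the $K_m$-colored graph is $(m-1)$-partite and hence $K_m$-free, while each $T_{t+1}$-colored component has only $t<t+1$ vertices and so contains no tree on $t+1$ vertices. For the upper bound I would induct on $m$. Given a $2$-coloring of $K_{t(m-1)+1}$ containing no monochromatic $T_{t+1}$, I would invoke the greedy embedding fact that any graph of minimum degree at least $t$ contains every tree on $t+1$ vertices; hence some vertex $v$ has $T_{t+1}$-color degree at most $t-1$, so its $K_m$-color degree is at least $t(m-2)+1$. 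Restricting the coloring to this $K_m$-color neighborhood and applying the induction hypothesis yields either a forbidden monochromatic tree or a $K_m$-colored $K_{m-1}$, and the latter together with $v$ produces the required $K_m$.

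Part (iii) is where the real work lies. Writing $n=|V(H)|$ and $c=\alpha(H)$, the two lower-bound constructions are again transparent. To force the term $n+m-1$, take a set $S$ of $m-1$ vertices and color every edge meeting $S$ with the matching color: that color then admits no matching of size $m$, since every such edge uses a vertex of $S$, while the complementary clique on $n-1$ vertices is too small to contain $H$. To force $n+2m-c-1$, take a clique $K_{2m-1}$ in the matching color (whose matching number is $m-1$) joined by edges of the other color to a clique of order $n-c-1$ in that color; any copy of $H$ can place at most $c$ vertices in the independent part, leaving at least $n-c$ for the clique part of size only $n-c-1$, a contradiction.

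The obstacle is the upper bound matching these constructions. I would argue by induction on $m$, repeatedly extracting an edge of the matching color and controlling how the independence number $c$ constrains the residual structure, the delicate point being to rule out sporadic intermediate configurations that could exceed both main bounds. This is precisely where the hypothesis $K_c\cup K_{\lceil c/2\rceil}\subseteq\overline{H}$ enters: it guarantees that $H$ contains two disjoint independent sets of sizes $c$ and $\lceil c/2\rceil$, which is the structural input that makes the maximum of the two bounds exactly tight. Rather than reprove this direction from scratch, I would follow the Faudree--Schelp--Simonovits argument, verifying in particular that the stated side condition is exactly what forces equality; I expect the bookkeeping around the two competing bounds, and confirming that no third regime arises under this hypothesis, to be the hardest part of the whole lemma.
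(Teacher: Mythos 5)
The paper gives no proof of this lemma at all: it is a compendium of known results, cited to Burr \cite{Burr}, Chv\'{a}tal \cite{Chv}, and Faudree--Schelp--Sheehan \cite{FaSS}, so your proposal goes beyond the paper rather than diverging from it. Your proofs of (i) and (ii) are complete, correct, and are the standard ones: for (i), the Burr construction with $\chi(H)-1$ red cliques of order $|V(H)|-1$ plus one of order $\sigma(H)-1$, where you correctly use the definitional fact that in \emph{every} proper $\chi(H)$-coloring each class has at least $\sigma(H)$ vertices (so the small part cannot host a class of a blue copy of $H$, and all $\chi(H)$ parts must be used since fewer would give a proper coloring with too few classes); for (ii), Chv\'{a}tal's induction, whose only nontrivial ingredient is the greedy fact that minimum degree $t$ suffices to embed any tree with $t$ edges --- you should just record the trivial base case $m\leq 2$ to anchor the induction. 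For (iii), your two lower-bound constructions are correct and need no hypothesis on $\overline{H}$: the $(m-1)$-set covering all matching-colored edges forces $|V(H)|+m-1$, and the matching-colored $K_{2m-1}$ completely joined to a $K_{|V(H)|-c-1}$ in the other color forces $|V(H)|+2m-c-1$ (a copy of $H$ puts at most $c$ vertices in the independent side). You are also right that the condition $K_c\cup K_{\lceil c/2\rceil}\subseteq \overline{H}$ --- note that the $\overline{G}$ in the paper's statement is a typo for $\overline{H}$ --- enters only in the matching upper bound, and deferring that direction to the original source creates no gap relative to the paper, which defers all three parts. One small correction: the third author of \cite{FaSS} is Sheehan, not Simonovits.
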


Applying Lemma~\ref{le:R2}~(iii), we can derive the following corollary.

\begin{corollary}\label{cor:matching}
For any graph $H$ with no isolated vertices and with $c$ components, if $m\leq c$, then $R(mK_2, H)=|V(H)|+m-1$.
\end{corollary}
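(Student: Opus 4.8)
The plan is to establish matching lower and upper bounds. For the lower bound $R(mK_2,H)\ge |V(H)|+m-1$, I would exhibit a $2$-coloring of $K_{|V(H)|+m-2}$ with neither a red $mK_2$ nor a blue $H$: fix a set $W$ of $m-1$ vertices, color red every edge meeting $W$ and blue every other edge. Every red edge is covered by $W$, so the red graph has no matching of size $m$; the blue graph is a clique on only $|V(H)|-1$ vertices, hence cannot contain $H$. Thus $n=|V(H)|+m-1$ is the correct target and it remains to prove the upper bound $R(mK_2,H)\le n$.

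For the upper bound I would invoke Lemma~\ref{le:R2}(iii), with the lemma's independence-number parameter equal to $\alpha\colonequals\alpha(H)$. First observe that choosing one vertex from each of the $c$ components of $H$ produces an independent set of size $c$, so $m\le c\le\alpha$. Consequently $|V(H)|+2m-\alpha-1\le |V(H)|+m-1$, and the maximum appearing in Lemma~\ref{le:R2}(iii) collapses to the second term. Hence, once the structural hypothesis of that lemma is verified, we obtain exactly $R(mK_2,H)=|V(H)|+m-1$.

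The main obstacle is verifying the structural hypothesis $K_{\alpha}\cup K_{\lceil\alpha/2\rceil}\subseteq\overline H$, i.e.\ producing two vertex-disjoint independent sets of sizes $\alpha$ and $\lceil\alpha/2\rceil$ in $H$. I would attempt this component-wise: in each component take a maximum independent set together with a disjoint second independent set drawn from the remaining vertices, and then combine across components. The delicate point is that ``star-like'' components carry a large independence number concentrated on few vertices, so the required second independent set must be built with care; for such degenerate components one may have to argue directly rather than through the lemma.

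Should the structural hypothesis fail, I would fall back on a self-contained argument for the upper bound. Assuming no red $mK_2$, the red graph $F$ has matching number at most $m-1$; by the Berge--Tutte formula there is a set $S$ with $|S|=s\le m-1$ such that $F-S$ has many components, any two of which are joined entirely by blue edges. Thus the blue graph on $V\setminus S$ contains a complete multipartite graph whose parts are these components, with total order $n-s\ge |V(H)|$. Since $H$ has $c\ge m>s$ components (equivalently $\alpha(H)\ge m$), $H$ is spread out enough to be packed into this complete multipartite blue structure, each part receiving an independent set of $H$ of size at most the part size, which yields the desired blue copy of $H$. The crux of this route is the packing/embedding step, where the component count of $H$ and the capacity bound $n-s\ge|V(H)|$ must be balanced against the individual part sizes.
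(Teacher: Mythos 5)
Your lower bound construction is correct and is exactly the paper's. The gap is in the upper bound. Your primary route---applying Lemma~\ref{le:R2}~(iii) directly to $H$ with the independence parameter $\alpha(H)$---fails precisely where you suspect it might: the structural hypothesis $K_{\alpha}\cup K_{\lceil \alpha/2\rceil}\subseteq \overline{H}$ is simply false for many graphs covered by the corollary. For instance, if $H$ is a disjoint union of $c$ stars $K_{1,n}$ with $n\geq 3$, then $\alpha(H)=cn$ while $|V(H)|=c(n+1)$, so two disjoint independent sets of sizes $cn$ and $\lceil cn/2\rceil$ do not even fit inside $V(H)$; yet the corollary must cover this $H$ for every $m\leq c$. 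Your fallback via Berge--Tutte then leaves unproven precisely the step carrying all the difficulty: embedding $H$ into the blue complete multipartite graph on $V\setminus S$. This is not automatic from ``$H$ has $c\geq m$ components and the capacity is $n-|S|\geq |V(H)|$'': the parts (components of the red graph minus $S$) can be large (think of red stars), and each part may only receive an independent set of $H$. Making this work requires a Hall/Gale--Ryser-type counting argument exploiting that, for the Berge--Tutte extremal set $S$, the total excess $\sum_i (b_i-1)$ of the part sizes is at most $2(m-1-|S|)$. That can be pushed through, but as written your argument is a sketch whose crux is exactly the unproven step, so it is not a proof.

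The single idea you are missing---which makes the paper's proof three lines---is to apply Lemma~\ref{le:R2}~(iii) not to $H$ but to the graph $K\colonequals K_{|V(H_1)|}\cup\cdots\cup K_{|V(H_c)|}$, the disjoint union of cliques on the vertex sets of the components $H_1,\ldots,H_c$ of $H$. Since $H\subseteq K$ and $|V(K)|=|V(H)|$, monotonicity gives $R(mK_2,H)\leq R(mK_2,K)$. For $K$ the hypotheses of the lemma are immediate: its independence number is exactly $c$ (one vertex per clique), and because $H$ has no isolated vertices every clique has at least two vertices, so $2K_c\subseteq \overline{K}$ and in particular $K_c\cup K_{\lceil c/2\rceil}\subseteq \overline{K}$. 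The lemma then yields
$R(mK_2,K)=\max\{|V(H)|+2m-c-1,\,|V(H)|+m-1\}=|V(H)|+m-1$
since $m\leq c$. This substitution repairs both of your difficulties at once: it shrinks the independence parameter from $\alpha(H)$ down to $c$ (which is why $m\leq c$, rather than $m\leq\alpha(H)$, is the natural hypothesis), and it makes the structural condition of the lemma trivially verifiable.
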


\begin{proof}
We first show that $R(mK_2, H)> |V(H)|+m-2$ by constructing a 2-edge-colored $K_{|V(H)|+m-2}$ as follows.
We partition the vertex set into two subsets $V_1$ and $V_2$ with $|V_1|=m-1$ and $|V_2|=|V(H)|-1$.
We color all edges within $V_1$ and all edges between $V_1$ and $V_2$ with color 1,
and all edges within $V_2$ with color 2.
Then there is no monochromatic $mK_2$ of color 1 or monochromatic $H$ of color 2.

We next show that $R(mK_2, H)\leq |V(H)|+m-1$.
Let $H_1, \ldots, H_c$ be the components of $H$.
Since $H$ contains no isolated vertices, we have $|V(H_i)|\geq 2$ for all $i\in [c]$.
Let $K=K_{|V(H_1)|} \cup \cdots \cup K_{|V(H_c)|}$.
Then $|V(H)|=|V(K)|$ and $R(mK_2, H)\leq R(mK_2, K)$, so it suffices to show that $R(mK_2, K)\leq |V(K)|+m-1$.
Note that the independence number of $K$ is $c$ and $2K_c\subseteq \overline{G}$.
Since $m\leq c$ and by Lemma~\ref{le:R2}~(iii), we have $R(mK_2, K)=\max\{|V(K)|+2m-c-1, |V(K)|+m-1\}= |V(K)|+m-1$.
The proof is complete.
\end{proof}

We next state and prove a lemma related to lower bounds on $3$-colored Ramsey numbers.

\begin{lemma}\label{le:R3}
For connected nonempty graphs $G_1, \ldots, G_t$, the following statements hold.
\begin{itemize}
%\item[{\rm (i)}] $R_3(G_1\cup \cdots \cup G_t)\geq \max_{1\leq i,j,\ell\leq t}\left\{(\chi(G_i)-1)(R(G_j, G_{\ell})-1)+\sum_{k\in I_i}\sigma(G_k)\right\},$ where $I_i=\{k\in [t]\colon\, \chi(G_k)=\chi(G_i)\}$.
\item[{\rm (i)}] $R_3(G_1\cup \cdots \cup G_t)\geq \max_{1\leq i,j,\ell\leq t}\left\{(\chi(G_i)-1)(R(G_j, G_{\ell})-1)+\sum_{k\in [t], \chi(G_k)=\chi(G_i)}\sigma(G_k)\right\}.$
\item[{\rm (ii)}] $R_3(G_1\cup \cdots \cup G_t)\geq \max_{1\leq i,j\leq t}\left\{\left(R_2\left(K_{\omega(G_i)}\right)-1\right)(|V(G_j)|-1)+1\right\}.$
\item[{\rm (iii)}] If $\chi(G_i)=3$ for all $i\in [t]$, then $R_3(G_1\cup \cdots \cup G_t)\geq 3\sum_{i\in [t]}|V(G_i)|-2.$
\item[{\rm (iv)}] If $\max_{i\in [t]}\chi(G_i)= 3$, then $R_3(G_1\cup \cdots \cup G_t)\geq 2\sum_{i\in [t]}|V(G_i)|+\sum_{i\in [t]}\sigma_3(G_i)-2.$
\end{itemize}
\end{lemma}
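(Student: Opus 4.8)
The plan is to prove each of the four lower bounds by exhibiting an explicit $3$-edge-coloring of the complete graph on one fewer vertex than the claimed value, with no monochromatic copy of $H\colonequals G_1\cup\cdots\cup G_t$. Throughout I would lean on one elementary principle: a connected graph $F$ cannot embed into a color class that is either bipartite (more generally, of chromatic number $<\chi(F)$) or whose every connected component has fewer than $|V(F)|$ vertices. Since $H$ contains each $G_i$ as a connected component, in each case it suffices to guarantee that in every one of the three colors some prescribed component is excluded by one of these two obstructions.

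For (iii), write $n=\sum_i|V(G_i)|$ and partition $K_{3(n-1)}$ into parts $V_1,V_2,V_3$ of size $n-1$; color the edges inside $V_m$ with color $m$ and the edges between $V_m$ and $V_{m'}$ with the remaining third color, so that each color class is $K_{n-1}\cup K_{n-1,n-1}$. As every $G_i$ is $3$-chromatic, no component embeds in the bipartite part, so all of $H$ would have to lie inside the clique $K_{n-1}$; since $|V(H)|=n>n-1$ this is impossible, giving $R_3(H)\ge 3n-2$. For (ii), fix $i,j$ attaining the maximum, set $\omega=\omega(G_i)$, take a $2$-coloring of $K_{R_2(K_\omega)-1}$ with no monochromatic $K_\omega$, and blow up each vertex into an independent set of size $|V(G_j)|-1$, using the third color inside each blob and keeping the base color between blobs. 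The third color is then a disjoint union of cliques $K_{|V(G_j)|-1}$, each too small to contain the connected graph $G_j$; and each of the first two colors is a blow-up of a $K_\omega$-free graph, hence has clique number $<\omega$ and cannot contain $G_i\supseteq K_\omega$.

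Parts (i) and (iv) carry the chromatic-surplus term, and since (iv) is the case $\chi=3$ of (i) I would treat them together. Fix $i,j,\ell$ attaining the maximum in (i), put $\chi=\chi(G_i)$, $R=R(G_j,G_\ell)$ and $I_i=\{k:\chi(G_k)=\chi\}$. The structural heart is the surplus identity: the restriction of a proper $\chi$-coloring of $\bigcup_{k\in I_i}G_k$ to each $G_k$ is again a proper $\chi$-coloring, so every color class meets $G_k$ in at least $\sigma(G_k)$ vertices, whence every class has size at least $\sum_{k\in I_i}\sigma(G_k)$. I would take the third color to be a complete $\chi$-partite graph with $\chi-1$ parts of size $R-1$ and one further independent surplus part $W$ of size $\sum_{k\in I_i}\sigma(G_k)-1$: embedding all components indexed by $I_i$ would force more than $|W|$ of their vertices into the part $W$, a contradiction, so the third color avoids $H$. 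Inside each size-$(R-1)$ part I would place a $2$-coloring of $K_{R-1}$ with no color-$1$ copy of $G_j$ and no color-$2$ copy of $G_\ell$, which exists by the definition of $R$; then colors $1$ and $2$ globally avoid $G_j$ and $G_\ell$, hence avoid $H$. Counting vertices gives $(\chi-1)(R-1)+\sum_{k\in I_i}\sigma(G_k)$, and specializing to $\chi=3$ yields the tripartite third color $K_{n-1,n-1,\sigma-1}$ with $\sigma=\sum_i\sigma_3(G_i)$ and two cliques $K_{n-1}$ in colors $1$ and $2$, proving (iv).

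The hard part, and where essentially all the care resides, is the coloring of the edges inside the surplus part $W$. They cannot receive the third color without destroying the complete-multipartite structure that makes the surplus argument go through, so they must be distributed among colors $1$ and $2$, where they threaten to create small monochromatic cliques capable of absorbing a small component $G_k$. I would resolve this by choosing the $2$-coloring inside $W$ to avoid $G_j$ in color $1$ and $G_\ell$ in color $2$ as well, so that the global $G_j$-freeness of color $1$ (resp. $G_\ell$-freeness of color $2$) is preserved; failing that, one notes that $|W|=\sum_{k\in I_i}\sigma(G_k)-1$ is too small to host the maximizing component, which again forces that component into a part that already excludes it. Confirming that one of these options is always available for the maximizing triple $(i,j,\ell)$, and checking the exactly analogous point inside the part of size $\sigma-1$ in (iv), is the delicate step of the argument that I expect to absorb most of the work.
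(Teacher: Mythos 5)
Your constructions for parts (ii) and (iii) are correct and are exactly the paper's. For (i) and (iv), however, you deliberately deviate from the paper at one point, and that deviation is where your proof breaks. The paper gives \emph{all} remaining edges, including those inside the surplus part, color $3$; you instead insist that the surplus part $W$ stay independent in color $3$ (so that color $3$ is genuinely complete multipartite and the surplus count is airtight) and push the edges inside $W$ into colors $1$ and $2$. Your color-$3$ argument is then indeed valid, but the completion you propose for colors $1$ and $2$ does not work. The Ramsey coloring of $K_{|W|}$ avoiding $G_j$ in color $1$ and $G_\ell$ in color $2$ exists only when $|W|=\sum_{k\in I_i}\sigma(G_k)-1\le R(G_j,G_\ell)-1$, which fails whenever $H$ has many components (already for $H=tK_3$ with $t\ge 7$, where $|W|=t-1\ge 6=R(K_3,K_3)$). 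And precisely in that regime your fallback rests on a false premise: since $R(G_j,G_\ell)\ge\max\{|V(G_j)|,|V(G_\ell)|\}$, the inequality $|W|\ge R(G_j,G_\ell)$ forces $|W|\ge |V(G_j)|$, so $W$ is \emph{not} ``too small to host'' the excluded component.

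For part (iv) the failure is not merely a missing detail but irreparable within your framework. Take $H=tK_3$ with $t\ge 7$. In your construction the two big parts are monochromatic cliques $K_{3t-1}$ in colors $1$ and $2$, and $|W|=\sum_i\sigma_3(G_i)-1=t-1\ge 6$, so \emph{every} $2$-coloring of the edges inside $W$ contains a monochromatic triangle, say in color $1$; that triangle, together with $t-1$ further disjoint triangles inside the color-$1$ clique $K_{3t-1}$, gives a monochromatic $tK_3=H$. Hence no completion of your construction avoids a monochromatic $H$, whereas the paper's assignment of color $3$ to the inside of $W$ is exactly what saves this case: every color-$3$ triangle must then use a vertex of $W$, and there are only $t-1$ of them. (Your structural worry about the paper's choice is not baseless -- once $W$ is a clique in color $3$, the claim that a $\chi$-chromatic component needs $\sigma(G_k)$ vertices in $W$ requires that $G_k$ cannot be made $(\chi-1)$-partite by deleting fewer than $\sigma(G_k)$ vertices forming a possibly non-independent set, a point the paper passes over silently -- but your alternative provably cannot replace it.) As written, your argument establishes (i) and (iv) only under the additional hypothesis $\sum_{k\in I_i}\sigma(G_k)\le R(G_j,G_\ell)$ (respectively its analogue for (iv)), so the proposal has a genuine gap.
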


\begin{proof} We shall prove the results by constructing appropriate 3-edge-colored complete graphs without monochromatic copies of $G_1\cup \cdots \cup G_t$.

(i) We choose $i,j,\ell$ with $1\leq i,j,\ell \leq t$ arbitrarily.
Let $F$ be a 3-edge-colored $K_n$ defined as follows, where $n=(\chi(G_i)-1)(R(G_j, G_{\ell})-1)+\sum_{k\in I_i}\sigma(G_k)-1$ and $I_i\colonequals \{k\in [t]\colon\, \chi(G_k)=\chi(G_i)\}$.
We partition $V(F)$ into $\chi(G_i)$ subsets $V_1, V_2, \ldots, V_{\chi(G_i)}$ such that $|V_m|=R(G_j, G_{\ell})-1$ for each $1\leq m\leq \chi(G_i)-1$, and $|V_{\chi(G_i)}|=\sum_{k\in I_i}\sigma(G_k)-1$.
For each $1\leq m\leq \chi(G_i)-1$, we color the edges within $V_m$ using colors 1 and 2 so that it contains neither a monochromatic $G_j$ of color 1 nor a monochromatic $G_{\ell}$ of color 2.
We color all the remaining edges using color 3.
Note that $F$ contains neither a monochromatic $G_j$ of color 1 nor a monochromatic $G_{\ell}$ of color 2.
Thus $F$ contains no monochromatic $G_1\cup \cdots \cup G_t$ of color 1 or 2.
For each $k\in I_i$, since $\chi(G_k)=\chi(G_i)$, every monochromatic $G_k$ of color 3 must contains at least $\sigma(G_k)$ vertices in each of $V_1, V_2, \ldots, V_{\chi(G_i)}$.
But $|V_{\chi(G_i)}|=\sum_{k\in I_i}\sigma(G_k)-1$, so $F$ contains no monochromatic $G_1\cup \cdots \cup G_t$ of color 3.

(ii) We choose $i,j$ with $1\leq i,j \leq t$ arbitrarily.
Let $F$ be a 3-edge-colored $K_n$ defined as follows, where $n=\left(R_2\left(K_{\omega(G_i)}\right)-1\right)(|V(G_j)|-1)$.
We partition $V(F)$ into $R_2\left(K_{\omega(G_i)}\right)-1$ subsets $V_1, V_2, \ldots, V_{R_2\left(K_{\omega(G_i)}\right)-1}$ each of size $|V(G_j)|-1$.
By the definition, there exists a 2-edge-colored $K_{R_2\left(K_{\omega(G_i)}\right)-1}$ using colors 1 and 2 without monochromatic copies of $K_{\omega(G_i)}$.
Denoted such a 2-edge-colored $K_{R_2\left(K_{\omega(G_i)}\right)-1}$ by $K$, and let $V(K)=\big\{v_1, v_2, \ldots, v_{R_2\left(K_{\omega(G_i)}\right)-1}\big\}$.
We color the edges between the $R_2\left(K_{\omega(G_i)}\right)-1$ parts of $F$ so that it forms a blow-up of $K$, that is, for each $1\leq m_1<m_2 \leq R_2\left(K_{\omega(G_i)}\right)-1$, we color all edges between $V_{m_1}$ and $V_{m_2}$ using the color of edge $v_{m_1}v_{m_2}$ in $K$.
We color all the remaining edges in $F$ using color 3.
Note that $F$ contains no monochromatic $G_j$ of color 3, so it contains no monochromatic $G_1\cup \cdots \cup G_t$ of color 3.
Moreover, $F$ contains no monochromatic $K_{\omega(G_i)}$ of color 1 or 2, so it contains no monochromatic $G_i$ (and thus no monochromatic $G_1\cup \cdots \cup G_t$) of color 1 or 2.

(iii) Let $F$ be a 3-edge-colored $K_n$ defined as follows, where $n=3\sum_{i\in [t]}|V(G_i)|-3$.
We partition $V(F)$ into three subsets $V_1, V_2, V_3$ each of size $\sum_{i\in [t]}|V(G_i)|-1$.
We color all edges within $V_1$ and all edges between $V_2$ and $V_3$ with color 1,
all edges within $V_2$ and all edges between $V_3$ and $V_1$ with color 2,
and all edges within $V_3$ and all edges between $V_1$ and $V_2$ with color 3.
Since $\chi(G_i)=3$ for all $i\in [t]$, every monochromatic copy of $G_i$ of color $m$ ($m\in [3]$) in $F$ must be contained in $F[V_m]$.
Since $|V_1|=|V_2|=|V_3|=\sum_{i\in [t]}|V(G_i)|-1$, there is no monochromatic $G_1\cup \cdots \cup G_t$ in $F$.

(iv) Let $F$ be a 3-edge-colored $K_n$ defined as follows, where $n=2\sum_{i\in [t]}|V(G_i)|+\sum_{i\in [t]}\sigma_3(G_i)-3$.
We partition $V(F)$ into three subsets $V_1, V_2, V_3$ with $|V_1|=|V_2|=\sum_{i\in [t]}|V(G_i)|-1$ and $|V_3|=\sum_{i\in [t]}\sigma_3(G_i)-1$.
We color all edges within $V_1$ with color 1, all edges within $V_2$ with color 2, and all the remaining edges with color 3.
Note that $F$ contains no monochromatic $G_1\cup \cdots \cup G_t$ of color 1 or 2.
Without loss of generality, we may assume that $G_1, \ldots, G_m$ are all the graphs of chromatic number 3 in $G_1, \ldots, G_t$.
Then $|V_3|=\sum_{i\in [t]}\sigma_3(G_i)-1=\sum_{i\in [m]}\sigma(G_i)-1$, and every monochromatic $G_i$ ($i\in [m]$) of color 3 must contain at least $\sigma(G_i)$ vertices in $V_3$.
Hence, there is no monochromatic $G_1\cup \cdots \cup G_t$ of color 3 in $F$.
\end{proof}

In the study of extremal graph theory problems, Simonovits~\cite{Sim983} introduced the decomposition family of graphs.
Given a graph $H$ with $\chi(H)=p\geq 3$, the {\it decomposition family} $\mathcal{M}(H)$ of $H$ is the set of minimal graphs $M$ that satisfies the following:
%for each $M$, there exists an integer $t$ such that $H\subseteq M\vee K_{(p-2)\times t}$, where $K_{(p-2)\times t}$ is the complete $(p-2)$-partite graph with $t$ vertices in each partite set.
for each $M$, there exists an integer $t$ such that $H\subseteq \left(M\cup \overline{K}_{t-|V(M)|}\right)\vee K_{(p-2)\times t}$, where $K_{(p-2)\times t}$ is the complete $(p-2)$-partite graph with $t$ vertices in each partite set.
%
%In other words, a graph $M$ is in $\mathcal{M}(H)$ if the graph obtained from putting a copy of $M$ (but not any of its proper subgraphs) into one partite set of a complete $(p-1)$-partite graph with large partite sets contains $H$ as a subgraph.
%
%In other words, a graph $M$ is in $\mathcal{M}(H)$ if the graph obtained from putting a copy of $M$ (but not any of its proper subgraphs) into one partite set of a complete $(p-1)$-partite graph with large partite sets (e.g., each of size at least $|V(H)|$) contains $H$ as a subgraph.
%
%In other words, a graph $M$ is in $\mathcal{M}(H)$ if the graph obtained in the following way contains $H$ as a subgraph: putting a copy of $M$ (but not any of its proper subgraphs) into one partite set of a complete $(p-1)$-partite graph with large partite sets (e.g., each of size at least $|V(H)|$).
%
%In other words, a graph $M$ is in $\mathcal{M}(H)$ if the graph obtained in the following way contains $H$ as a subgraph: choose a complete $(p-1)$-partite graph with large partite sets (e.g., each of size at least $|V(H)|$), and put a copy of $M$ (but not any of its proper subgraphs) into one partite set.
%
In other words, a graph $M$ is in $\mathcal{M}(H)$ if the graph obtained from embedding $M$ (but not any of its proper subgraphs) into one partite set of a complete $(p-1)$-partite graph with large partite sets (e.g., each of size at least $|V(H)|$) contains $H$ as a subgraph.
We shall use the following Ramsey-type results related to the decomposition family of a graph.

\begin{lemma}\label{le:R-decom}
For two connected graphs $G$ and $H$, the following statements hold.
\begin{itemize}
\item[{\rm (i)}] $R_2(H)\geq R(H, \mathcal{M}(H))+(\chi(H)-2)(|V(H)|-1)$.
\item[{\rm (ii)}] $R(G\cup H, \mathcal{M}(H))\leq \max\{R(G, H), R(H, \mathcal{M}(H))+|V(G)|\}$.
\end{itemize}
\end{lemma}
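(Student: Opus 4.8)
The plan is to prove the two parts separately: a lower-bound construction for (i) and a deletion argument for (ii), both resting on one structural observation about the decomposition family. Write $p=\chi(H)$. Since $\chi(H)=p$, the graph $H$ cannot embed into a complete $(p-1)$-partite graph, so placing all of $H$ into a single part witnesses that $H$ itself has the defining property of $\mathcal{M}(H)$ (take $M=H$ in the definition); hence some minimal member $M\in\mathcal{M}(H)$ satisfies $M\subseteq H$. In particular, every copy of $H$, in any color, contains a copy of some $M\in\mathcal{M}(H)$. I would record this first, as it is used in both parts.

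For (i), I would exhibit a $2$-edge-coloring of $K_n$ with $n=(R(H,\mathcal{M}(H))-1)+(\chi(H)-2)(|V(H)|-1)$ having no monochromatic $H$. Partition the vertices into $V_0,V_1,\dots,V_{p-2}$ with $|V_0|=R(H,\mathcal{M}(H))-1$ and $|V_i|=|V(H)|-1$ for $i\ge 1$. On $V_0$ put a critical coloring avoiding a color-$1$ copy of $H$ and avoiding a color-$2$ copy of every $M\in\mathcal{M}(H)$, which exists by the definition of $R(H,\mathcal{M}(H))$. Color every edge inside each $V_i$ ($i\ge1$) with color $1$, and every edge joining two distinct parts with color $2$. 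I would then check both colors. In color $1$ the graph splits into the color-$1$ part of $V_0$ (no $H$ by construction) and the disjoint cliques $K_{|V(H)|-1}$ on the $V_i$ (too small for the connected graph $H$), so there is no monochromatic $H$. In color $2$, deleting the internal edges of $V_0$ leaves a complete $(p-1)$-partite graph, so any embedded $H$ must place a nonempty subgraph $H_0$ of the color-$2$ subgraph on $V_0$ inside $V_0$ and independent sets elsewhere, whence $H\subseteq H_0\vee K_{(p-2)\times t}$, and therefore $H_0$ contains some $M\in\mathcal{M}(H)$ — contradicting the choice of coloring on $V_0$. The size bookkeeping (each $|V_i|=|V(H)|-1$ is large enough because $|V(H_0)|\ge 2$) then yields the stated inequality.

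For (ii), set $N=\max\{R(G,H),R(H,\mathcal{M}(H))+|V(G)|\}$ and take any $2$-edge-coloring of $K_N$ with no color-$2$ copy of any $M\in\mathcal{M}(H)$; the goal is a color-$1$ copy of $G\cup H$. By the structural observation, the absence of a color-$2$ member of $\mathcal{M}(H)$ forces the absence of a color-$2$ copy of $H$. Since $N\ge R(G,H)$, the coloring must then contain a color-$1$ copy of $G$ on some vertex set $X$ with $|X|=|V(G)|$. Deleting $X$ leaves at least $N-|V(G)|\ge R(H,\mathcal{M}(H))$ vertices still free of color-$2$ copies of $\mathcal{M}(H)$, so by definition they contain a color-$1$ copy of $H$ disjoint from $X$; together these form the desired color-$1$ copy of $G\cup H$.

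The main obstacle is the color-$2$ verification in part (i): one must argue rigorously that an embedding of $H$ into the ``complete $(p-1)$-partite graph plus internal edges on $V_0$'' forces a member of $\mathcal{M}(H)$ to appear inside $V_0$. This means carefully translating such an embedding into the precise form $H\subseteq(M\cup\overline{K}_{t-|V(M)|})\vee K_{(p-2)\times t}$ from the definition, and confirming that the outer parts $V_1,\dots,V_{p-2}$, each of size $|V(H)|-1$, are large enough to receive the induced independent sets — this is exactly where the bound $|V(H_0)|\ge 2$, a consequence of $\chi(H)=p>p-1$, is essential. Everything else, including the two deletion steps in part (ii), is routine.
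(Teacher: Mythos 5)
Your proof is correct and follows essentially the same route as the paper's: part (i) uses the identical construction (one part of size $R(H,\mathcal{M}(H))-1$ carrying a critical two-coloring, $\chi(H)-2$ parts of size $|V(H)|-1$ as color-$1$ cliques, all cross edges in color $2$), and part (ii) is the same deletion argument, with your preliminary observation that some $M\in\mathcal{M}(H)$ is a subgraph of $H$ matching the paper's remark that members of $\mathcal{M}(H)$ are subgraphs of $H$. One small note: your closing worry about the outer parts being ``large enough to receive the induced independent sets'' is unfounded --- the color-$2$ check argues from an assumed embedding of $H$, and since the integer $t$ in the definition of $\mathcal{M}(H)$ is existentially quantified, the sizes of $V_1,\dots,V_{p-2}$ play no role there (they matter only for the color-$1$ check).
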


\begin{proof}
(i) Let $F$ be a 2-edge-colored $K_n$ defined as follows, where $n=R(H, \mathcal{M}(H))+(\chi(H)-2)(|V(H)|-1)-1$.
We partition $V(F)$ into $\chi(H)-1$ subsets $V_1, \ldots, V_{\chi(H)-1}$ with $|V_{\chi(H)-1}|=R(H, \mathcal{M}(H))-1$ and $|V_i|=|V(H)|-1$ for $i\in \{1, \ldots, \chi(H)-2\}$.
We color the edges within $V_{\chi(H)-1}$ so that it contains neither a monochromatic $H$ of color 1 nor a monochromatic graph in $\mathcal{M}(H)$ of color 2.
We color all edges within $V_i$ (for $i\in \{1, \ldots, \chi(H)-2\}$) using color 1, and all the remaining edges using color 2.
Since $H$ is connected, there is no monochromatic $H$ of color 1 in $F$.
Moreover, since $F[V_{\chi(H)-1}]$ contains no monochromatic graph in $\mathcal{M}(H)$ of color 2, there is also no monochromatic $H$ of color 2 in $F$.

(ii) Let $n=\max\{R(G, H), R(H, \mathcal{M}(H))+|V(G)|\}$.
For a contradiction, suppose that $F$ is a $2$-edge-colored $K_n$ with no monochromatic $G\cup H$ of color 1 or monochromatic graph in $\mathcal{M}(H)$ of color 2.
Note that for any graph $M\in \mathcal{M}(H)$, $M$ is a subgraph of $H$.
Then since $n\geq R(G, H)$, $F$ must contains a monochromatic copy $A$ of $G$ of color 1.
Let $F'=F-A$.
Then $|V(F')|=n-|V(G)|\geq R(H, \mathcal{M}(H))+|V(G)|-|V(G)|=R(H, \mathcal{M}(H))$.
Thus $F$ contains a monochromatic copy $B$ of $H$ of color 1.
But then $A\cup B$ is a monochromatic $G\cup H$ of color 1, a contradiction.
\end{proof}

Given a bipartite graph $H$ and a positive integer $k$, the {\it $k$-colored bipartite Ramsey number} $BR_k(H)$ is the minimum integer $n$ such that every $k$-edge-coloring of the complete bipartite graph $K_{n,n}$ contains a monochromatic copy of $H$.
Note that given a bipartite graph $H$, if $H$ is disconnected, then there possibly exist a variety of distinct proper 2-edge-colorings, and thus there are many distinct ways to define its partite sets.
We define
%$s(H)\colonequals \min \{|S|\colon\, \mbox{$S,T$ are the partite sets of $H$ with $|S|\leq |T|$}\},$ $t(H)\colonequals \max \{|T|\colon\, \mbox{$S,T$ are the partite sets of $H$ with $|S|\leq |T|$}\},$ $s^{\ast}(H)\colonequals \max \{|S|\colon\, \mbox{$S,T$ are the partite sets of $H$ with $|S|\leq |T|$}\}$ and $t^{\ast}(H)\colonequals \min \{T|\colon\, \mbox{$S,T$ are the partite sets of $H$ with $|S|\leq |T|$}\}.$
\begin{align*}
  &~s(H)\colonequals \min \{|S|\colon\, \mbox{$S$ and $T$ are the partite sets of $H$ with $|S|\leq |T|$}\}, \\
  &~t(H)\colonequals \max \{|T|\colon\, \mbox{$S$ and $T$ are the partite sets of $H$ with $|S|\leq |T|$}\}, \\
  &~s^{\ast}(H)\colonequals \max \{|S|\colon\, \mbox{$S$ and $T$ are the partite sets of $H$ with $|S|\leq |T|$}\}, \\
  &~t^{\ast}(H)\colonequals \min \{|T|\colon\, \mbox{$S$ and $T$ are the partite sets of $H$ with $|S|\leq |T|$}\}.
\end{align*}
Then $|V(H)|=s(H)+t(H)=s^{\ast}(H)+t^{\ast}(H)$, $s(H)\leq s^{\ast}(H)\leq \frac{1}{2}|V(H)|\leq t^{\ast}(H)\leq t(H)$.
Moreover, if $H$ is connected, then $s(H)=s^{\ast}(H)$ and $t(H)=t^{\ast}(H)$.
We have the following lower bound on $BR_k(H)$ for connected bipartite graphs.
Note that this lower bound is best possible since $BR_k(K_{1,n})=k(n-1)+1$ (see \cite{HaHe}).

\begin{lemma}\label{le:bipar}
For any connected bipartite graph $H$, we have $BR_k(H)\geq k(t(H)-1)+1.$
\end{lemma}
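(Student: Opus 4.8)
The plan is to prove the inequality by a direct construction: I will exhibit a $k$-edge-coloring of $K_{n,n}$ with $n=k(t(H)-1)$ that contains no monochromatic copy of $H$, which immediately gives $BR_k(H)>n$, that is, $BR_k(H)\geq k(t(H)-1)+1$. Write $t\colonequals t(H)$, and let $X$ and $Y$ be the two sides of $K_{n,n}$, each of size $n=k(t-1)$. The key structural feature I will exploit is that, since $H$ is connected and bipartite, its proper $2$-coloring is unique up to swapping the two classes; hence in any embedding of $H$ into a bipartite graph, one class of $H$ (of size $s(H)$) maps entirely into one side and the other class (of size $t$) maps entirely into the opposite side. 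In particular, any copy of $H$ inside a complete bipartite graph $K_{a,b}$ forces $\max\{a,b\}\geq t$.

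For the construction, partition $X=X_1\cup\cdots\cup X_k$ and $Y=Y_1\cup\cdots\cup Y_k$ into blocks, each of size exactly $t-1$. Assign to each pair of blocks $(X_a,Y_b)$ the color $c(a,b)\colonequals ((a+b-2)\bmod k)+1\in[k]$, and color every edge of the complete bipartite graph between $X_a$ and $Y_b$ with this color. The assignment $c(a,b)$ is a cyclic Latin square of order $k$: each color occurs exactly once in every row and every column of the $k\times k$ block grid. Consequently, for each fixed color $i\in[k]$, the block-pairs receiving color $i$ form a perfect matching between the $X$-blocks and the $Y$-blocks, so the color-$i$ graph is a vertex-disjoint union of $k$ copies of $K_{t-1,t-1}$.

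It remains to check that no color class contains a copy of $H$. Since $H$ is connected, any copy of $H$ in the color-$i$ graph must lie inside a single connected component, and each such component is a $K_{t-1,t-1}$. But by the observation in the first paragraph, a copy of $H$ in $K_{t-1,t-1}$ would require a side of size at least $t$, whereas both sides have size $t-1<t$; this is impossible. Hence the coloring is free of monochromatic copies of $H$, completing the argument. The only genuinely delicate point is the opening observation that connectedness pins down the bipartition and forces the larger class of $H$ into a single side of each component; everything else is a routine disjointness and counting check, so I do not anticipate a substantial obstacle.
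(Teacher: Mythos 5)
Your construction is correct and is essentially the paper's own proof: the cyclic Latin square coloring of the $k\times k$ block grid is exactly a blow-up (by factor $t(H)-1$) of the standard decomposition of $K_{k,k}$ into $k$ perfect matchings, which is precisely the coloring the paper uses, and both arguments finish by noting that each color class is a disjoint union of copies of $K_{t(H)-1,t(H)-1}$, into which the connected graph $H$ cannot embed since its larger part has size $t(H)$. Your explicit justification of why connectedness forces the larger class of $H$ into a single side is a nice touch that the paper leaves implicit, but the route is the same.
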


\begin{proof}
It is well-known that the complete bipartite graph $K_{k,k}$ can be decomposed into $k$ edge-disjoint perfect matchings.
Thus we can obtain a $k$-edge-coloring $G$ of $K_{k,k}$ such that each color induces a perfect matching.
We construct a $k$-edge-coloring $F$ of $K_{k(t(H)-1),k(t(H)-1)}$ by substituting each vertex of $G$ with $t(H)-1$ new vertices, i.e., $F$ is a blow-up of $G$.
Then every color induces a $kK_{t(H)-1, t(H)-1}$ in $F$.
Since $H$ is connected and $K_{t(H)-1, t(H)-1}$ contains no $H$, there exists no monochromatic copy of $H$ in $F$.
Hence, we have $BR_k(H)\geq k(t(H)-1)+1.$
\end{proof}

\subsection{Structural results}
\label{subsec:stru}

%In 2007, Thomason and Wagner~\cite{ThWa} obtained the following structural result for edge-colored complete graphs with no rainbow $P_5$.
%Given an edge-colored graph $G$, let $V^{(i)}\colonequals \{v\in V(G)\colon\, \mbox{$c(vu)=i$ for some $u\in V(G)$}\}$ and $E^{(i)}\colonequals \{e\in E(G)\colon\, c(e)=i\}$.
%
%\begin{theorem}{\normalfont (\cite{ThWa})}\label{th:ThWaP5}
%Let $K_n$ $(n \geq 5)$ be edge-colored such that it contains no rainbow $P_5$.
%Then, after renumbering the colors, one of the following holds:
%\begin{itemize}
%\item[{\rm (a)}] at most three colors are used;
%
%\item[{\rm (b)}] color 1 is dominant, meaning that the sets $V^{(j)}$, $j \geq 2$, are disjoint;
%
%\item[{\rm (c)}] $K_n-v$ is monochromatic for some vertex $v$;
%
%\item[{\rm (d)}] there are three vertices $v_1$, $v_2$, $v_3$ such that $E^{(2)}=\{v_1v_2\}$, $E^{(3)}=\{v_1v_3\}$, $E^{(4)}$ contains $v_2v_3$ plus perhaps some edges incident with $v_1$, and every other edge is in $E^{(1)}$;
%
%\item[{\rm (e)}] there are four vertices $v_1$, $v_2$, $v_3$, $v_4$ such that $\{v_1v_2\} \subset E^{(2)} \subset \{v_1v_2, v_3v_4\}$, $E^{(3)}=\{v_1v_3, v_2v_4\}$, $E^{(4)}=\{v_1v_4, v_2v_3\}$, and every other edge is in $E^{(1)}$;
%
%\item[{\rm (f)}] $n=5$, $V(K_n)=\{v_1, v_2, v_3, v_4, v_5\}$, $E^{(1)}=\{v_1v_4, v_1v_5, v_2v_3\}$, $E^{(2)}=\{v_2v_4, v_2v_5, v_1v_3\}$, $E^{(3)}=\{v_3v_4, v_3v_5, v_1v_2\}$ and $E^{(4)}= \{v_4v_5\}$.
%\end{itemize}
%\end{theorem}
%
%Using Theorem~\ref{th:ThWaP5}, Li, Besse, Magnant, Wang and Watts obtained the following technical lemma (see \cite[Lemma~1 and Remark~1]{LBMWW}).

In 2007, Thomason and Wagner~\cite{ThWa} characterized the structure of edge-colored complete graphs with no rainbow $P_5$ (the result is complicated, and we refer the interested readers to \cite[Theorem~2]{ThWa} for more information).
Using the result of Thomason and Wagner, Li, Besse, Magnant, Wang and Watts~\cite{LBMWW} derived the following technical lemma (see \cite[Lemma~1 and Remark~1]{LBMWW}).

\begin{lemma}{\normalfont (\cite{LBMWW})}\label{le:Caseb}
Let $H$ be a nonempty graph and $n\geq R_3(H)$.
Suppose that $K_{n}$ is edge-colored using exactly $k$ colors such that it contains neither a rainbow $P_5$ nor a monochromatic $H$.
Then we can partition the vertex set into $k-1$ parts $V_2, V_3, \ldots, V_k$ satisfying $($renumbering the colors if necessary$)$\footnote{We remark that there is no $V_1$ in this partition.}
\begin{itemize}
\item[{\rm (i)}] $\{i\}\subseteq C(V_i)\subseteq \{1,i\}$ for every $i\in \{2, 3, \ldots, k\}$, and
\item[{\rm (ii)}] $c(V_i, V_j)=1$ for every $2\leq i<j\leq k$.
\end{itemize}
\end{lemma}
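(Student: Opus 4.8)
The plan is to read off the claimed structure from the Thomason--Wagner classification of rainbow-$P_5$-free edge-colourings of $K_n$~\cite[Theorem~2]{ThWa}, using the two hypotheses---$n\ge R_3(H)$ and the absence of a monochromatic $H$---to discard every type on their list except the one yielding (i) and (ii), and then relabelling the colours. I would open with a reduction that already does much of the work: the colouring must use $k\ge 4$ colours. Indeed, if $k\le 3$ then it is a $k'$-edge-colouring with $k'\le 3$ of $K_n$, and since $n\ge R_3(H)\ge R_{k'}(H)$ it contains a monochromatic $H$, contrary to assumption. In particular every type on the Thomason--Wagner list that uses at most three colours in total is excluded outright.

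Among the types that can carry four or more colours, the two relevant shapes are a ``distinguished-vertex'' type, in which some vertex $v$ has $K_n-v$ coloured with at most two colours (all further colours appearing only on edges at $v$), and the ``substitution'' type, in which there is a colour $a$ and a partition of $V(K_n)$ such that every edge joining two distinct parts has colour $a$. The distinguished-vertex type is ruled out by a Ramsey argument: $K_n-v$ is a $2$-edge-coloured complete graph on $n-1$ vertices, and since $n-1\ge R_3(H)-1\ge R_2(H)$ it contains a monochromatic $H$, a contradiction. (Here I use $R_2(H)\le R_3(H)-1$, which holds for every nonempty graph without isolated vertices other than the single edge $K_2$; by Proposition~\ref{prop:isolated} we may assume $H$ has no isolated vertices, and the case $H=K_2$ makes the lemma vacuous.) Hence the colouring is of substitution type, and I rename the distinguished colour $a$ as colour~$1$.

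It then remains to upgrade the substitution structure to the flat partition required by (i) and (ii). The engine of this step is that, as $k\ge 4$ forces at least three colours other than $1$, a rainbow $P_5$ materialises the moment two distinct non-$1$ colours meet at a vertex inside one part while a third non-$1$ colour occurs elsewhere: if $c(xy)=b$ and $c(yz)=c$ with $b\ne c$ and $b,c\ne 1$, and $wt$ is an edge of some colour $d\notin\{1,b,c\}$ with $w$ lying in a different part, then the path $x\,{-}\,y\,{-}\,z\,{-}\,w\,{-}\,t$, whose third edge $zw$ is coloured $1$, is rainbow. Invoking this forbidden configuration, I would argue that each part may be taken to carry a single colour besides $1$; grouping the vertices according to this private colour, and absorbing the vertices incident only to colour-$1$ edges into these groups, produces exactly $k-1$ parts, one per colour in $\{2,\dots,k\}$, with all cross-part edges of colour $1$. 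After renumbering so that the private colour of $V_i$ is $i$, this is precisely conditions (i) and (ii).

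The main obstacle, I expect, is this final extraction, because the substitution structure is a priori recursive: many non-$1$ colours might crowd into a single part, with the other parts carrying only colour $1$. Here one argues that the union of those other parts is a colour-$1$ clique and hence contains a monochromatic $H$ unless it is very small, after which the classification is re-applied inside the large part and the forbidden rainbow configuration is used repeatedly, with explicit witnesses, to separate the colours. A second, purely bookkeeping, difficulty is to reconcile the few exceptional small cases of \cite[Theorem~2]{ThWa} and to check that the colour-$1$-only vertices can always be distributed so that the final number of parts is exactly $k-1$. Neither difficulty requires anything beyond the definitions of $R_2(H)$ and $R_3(H)$ and the elementary monotonicity $R_2(H)<R_3(H)$.
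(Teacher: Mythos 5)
Your route is the intended one: the paper itself does not prove Lemma~\ref{le:Caseb} but quotes it from \cite[Lemma~1 and Remark~1]{LBMWW}, where it is derived from the Thomason--Wagner classification \cite[Theorem~2]{ThWa}, exactly as you propose. Two of your steps are sound: excluding $k\le 3$ via $n\ge R_3(H)\ge R_k(H)$, and excluding the distinguished-vertex type via $n-1\ge R_3(H)-1\ge R_2(H)$ (the inequality $R_2(H)\le R_3(H)-1$ does hold for every $H$ with at least two edges and no isolated vertices). One small correction there: Proposition~\ref{prop:isolated} transfers the equality $f(H,P_5)=R_3(H)$, not this structural statement, so the reduction to isolated-vertex-free $H$ must be argued directly; it is easy (since $n\ge R_3(H)\ge |V(H)|$, a monochromatic copy of $H$ minus its isolated vertices extends to a monochromatic $H$), but it is not what that proposition says.

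The genuine gap is in the substitution case, which is precisely where conditions (i) and (ii) must come from. The partition case of \cite[Theorem~2]{ThWa} is stronger than the version you state: it asserts not only that all edges between parts have color $1$, but also that \emph{inside each part at most one color besides color $1$ occurs}. With the correct statement, the lemma follows from the short merging argument you gesture at (group parts by their private color, absorb the parts using only color $1$, renumber), and no recursion is needed. Having dropped the per-part condition, you are in effect re-proving that portion of Thomason--Wagner yourself, and your engine does not suffice: the displayed rainbow configuration only forbids the pattern ``two distinct non-$1$ colors meeting at a vertex of one part while a third non-$1$ color occurs in a different part.'' It says nothing about several non-$1$ colors sitting vertex-disjointly inside a single part, and at the bottom of your proposed recursion you can reach a part carrying three or more non-$1$ colors and no color $1$ at all, where global rainbow-$P_5$-freeness only yields rainbow-$P_4$-freeness inside that part, so yet another structure theorem would be required; the phrase ``used repeatedly, with explicit witnesses'' is not a proof of any of this. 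Separately, the exceptional case of \cite[Theorem~2]{ThWa} --- exactly four colors, with colors $2,3,4$ each spanning a single edge and these three edges forming a triangle --- is incompatible with (i)--(ii), so it must be eliminated rather than deferred as bookkeeping; it is killed by noting that such a coloring contains a color-$1$ clique on $n-2$ vertices together with the bound $R_3(H)\ge |V(H)|+2$ (valid for every $H$ with at least two edges and no isolated vertices), but neither this observation nor that bound appears in your write-up, and your closing claim that only the monotonicity $R_2(H)<R_3(H)$ is needed is therefore too optimistic.
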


Applying Lemma~\ref{le:Caseb}, we can further derive the following result.

\begin{lemma}\label{le:Caseb+}
Let $H$ be a nonempty graph with components $H_1, \ldots, H_t$, and let $n\geq R_3(H)$.
Suppose that $K_{n}$ is edge-colored with no rainbow $P_5$ or monochromatic $H$.
Then we can partition the vertex set into $k-1$ parts $V_2, V_3, \ldots, V_k$ for some $k\geq 4$ satisfying $($renumbering the colors if necessary$)$
\begin{itemize}
\item[{\rm (i)}] $\{i\}\subseteq C(V_i)\subseteq \{1,i\}$ for every $i\in \{2, 3, \ldots, k\}$,
\item[{\rm (ii)}] $c(V_i, V_j)=1$ for every $2\leq i<j\leq k$, and
\item[{\rm (iii)}] $|V_2|\geq |V_3|\geq \cdots \geq |V_k|$, $|V_3|\geq \max_{i\in [t]}|V(H_i)|$, $|V_4|\geq \min_{i\in [t]}|V(H_i)|$ and $|V_3\cup \cdots \cup V_k|\geq |V(H)|$.
\end{itemize}
\end{lemma}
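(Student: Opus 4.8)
The plan is to start from the partition supplied by Lemma~\ref{le:Caseb} and then upgrade it to satisfy the extra quantitative conditions in (iii). Let $k$ denote the number of colors actually used by the coloring. Since the coloring has no rainbow $P_5$ and no monochromatic $H$, Lemma~\ref{le:Caseb} applies and yields a partition $V_2,\ldots,V_k$ meeting (i) and (ii); I keep color $1$ as the distinguished ``cross'' color. The first easy point is that $k\ge 4$: the Ramsey numbers $R_j(H)$ are nondecreasing in $j$, so if $k\le 3$ then $n\ge R_3(H)\ge R_k(H)$ and the $k$-coloring would already contain a monochromatic $H$, contrary to assumption. Relabeling the colors $2,\ldots,k$ (color $1$ is left untouched, so (i) and (ii) are preserved) I may assume $|V_2|\ge |V_3|\ge\cdots\ge |V_k|$, which gives the ordering required in (iii).

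Two structural observations will drive everything else. First, the edges inside any $V_i$ use only colors from $\{1,i\}$, so the complete graph on $V_i$ is $2$-colored with no monochromatic $H$; hence $|V_i|\le R_2(H)-1$ for every $i$. Second, by (ii) every edge joining two different parts has color $1$, so the color-$1$ graph contains the complete $(k-1)$-partite graph $K[V_2,\ldots,V_k]$ as a spanning subgraph (and in fact more, namely the color-$1$ edges lying inside the parts). Consequently, \emph{any} copy of $H$ that embeds into $K[V_2,\ldots,V_k]$ is automatically a monochromatic $H$ in color $1$; embedding $H$ into this complete multipartite graph just means assigning the vertices of $H$ to the parts so that adjacent vertices land in distinct parts and each $V_i$ receives at most $|V_i|$ vertices, where the color classes of $H$ may be split freely across parts.

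With these in hand I would prove each of the three size bounds in (iii) by contradiction, in each case producing a monochromatic $H$ in color $1$ or contradicting $n\ge R_3(H)$. For the bound $|V_3\cup\cdots\cup V_k|\ge |V(H)|$, I would suppose the union is smaller than $|V(H)|$; then $V_2$ is very large while the remaining parts are few and small, and I would combine the complete multipartite structure with a large independent set of $H$ placed into $V_2$ (plus, if needed, color-$1$ edges inside $V_2$) to realize all of $H$ in color $1$. For $|V_3|\ge\max_{i}|V(H_i)|$ and $|V_4|\ge\min_{i}|V(H_i)|$, I would suppose the relevant part is too small and use that, with $k-1\ge 3$ parts and $n\ge R_3(H)$, there are then either enough moderately large parts to embed all components of $H$ simultaneously into $K[V_2,\ldots,V_k]$, or so few and small parts that $n\le (k-1)(R_2(H)-1)$ is forced below $R_3(H)$. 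In the latter regime the bound $|V_i|\le R_2(H)-1$ is exactly what converts ``few parts'' into a numerical contradiction with $n\ge R_3(H)$.

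The main obstacle is precisely this last embedding-or-counting step. The complete multipartite lower bound on the color-$1$ graph is clean, but a careful copy of $H$ may need to exploit the color-$1$ edges sitting inside the individual parts (these are only guaranteed to be $H$-free, not empty), and the analysis must simultaneously accommodate configurations with many singleton or near-singleton parts, where no single part can host an entire color class of a component. Making the case distinction uniform, so that every violation of a bound in (iii) yields either a genuine color-$1$ copy of $H$ or the inequality $n<R_3(H)$, is where the real work lies; the remainder is bookkeeping built on Lemma~\ref{le:Caseb} and the per-part inequality $|V_i|\le R_2(H)-1$.
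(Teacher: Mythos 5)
Your setup (applying Lemma~\ref{le:Caseb}, deducing $k\geq 4$ from $n\geq R_3(H)\geq R_k(H)$, and reordering the parts by size) matches the paper, but your treatment of part (iii) --- the entire new content of the lemma --- is only a strategy, and by your own admission the decisive step (``making the case distinction uniform \ldots\ is where the real work lies'') is not carried out. Worse, the two tools you name for that step cannot close the argument for a general $H$. The counting tool fails numerically: since all cross edges have color 1, a coloring with no monochromatic $H$ can have up to $|V(H)|-1$ parts (with $|V(H)|$ parts, one vertex per part would give a color-1 $K_{|V(H)|}\supseteq H$), so your bound $|V_i|\leq R_2(H)-1$ only yields $n\leq(|V(H)|-1)(R_2(H)-1)$, which need not contradict $n\geq R_3(H)$: for $H=2K_3$ this bound is $5\cdot 9=45$, while $R_3(2K_3)\leq 3\cdot 3+R_3(K_3)=26$ by the Lorimer--Segedin bound cited in Section~\ref{sec:conclu}. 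The embedding tool fails structurally: when (iii) is violated, the color-1 graph genuinely need not contain $H$ --- take two large parts with no internal color-1 edges and one singleton part; every color-1 triangle must pass through the singleton, so there is no color-1 $2K_3$. In such configurations the contradiction is a monochromatic $H$ in color 2 or in a part's own inside color (a large part with no internal color-1 edges is a monochromatic clique in its inside color), which your dichotomy ``color-1 copy of $H$, or $n<R_3(H)$ by counting'' never produces; extracting such copies for arbitrary $H$ would require two-color Ramsey analysis inside each part intertwined with the cross structure, i.e., an open-ended case analysis tailored to $H$.

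The idea you are missing makes all of this unnecessary and is the paper's whole proof. Merge the colors $\{4,\ldots,k\}$ into color 3 to obtain a 3-edge-coloring $F'$ of $K_n$. Since $n\geq R_3(H)$, the definition of $R_3(H)$ gives a monochromatic copy $A$ of $H$ in $F'$. Its color cannot be 1 or 2, because those color classes are identical in $F$ and $F'$ and $F$ has no monochromatic $H$; so $A$ has the merged color, whose edges lie only inside the parts $V_3,\ldots,V_k$ by (i) and (ii). Hence each component of $A$ lies inside a single part $V_i$ with $i\geq 3$, and the components cannot all lie in one part, since then $A$ would be monochromatic in $F$ in that part's own color. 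All three inequalities in (iii) now read off from $|V_2|\geq\cdots\geq|V_k|$: the part containing the largest component gives $|V_3|\geq\max_{i\in[t]}|V(H_i)|$; since at least two parts among $V_3,\ldots,V_k$ are used, the higher-indexed one gives $|V_4|\geq\min_{i\in[t]}|V(H_i)|$; and $V(A)\subseteq V_3\cup\cdots\cup V_k$ gives $|V_3\cup\cdots\cup V_k|\geq|V(H)|$.
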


\begin{proof} Let $F$ be an edge-colored $K_{n}$ using exactly $k$ colors such that it contains neither a rainbow $P_5$ nor a monochromatic $H$.
Then $k\geq 4$; otherwise $F$ contains a monochromatic copy of $H$ since $n\geq R_3(H)$.
By Lemma~\ref{le:Caseb}, we can partition $V(F)$ into $k-1$ parts $V_2, V_3, \ldots, V_{k}$ satisfying (i) and (ii).
Without loss of generality, we may assume that $|V_2|\geq |V_3|\geq \cdots \geq |V_{k}|$.
Let $F'$ be a $3$-edge-coloring of $K_n$ obtained from $F$ by recoloring all edges of colors in $\{4, \ldots, k\}$ with color $3$.
Since $n\geq R_{3}(H)$, there is a monochromatic copy of $H$ in $F'$.
Denote such a monochromatic $H$ by $A$.
Since $F$ contains no monochromatic copy of $H$, the color of $A$ must be $3$ and the components of $A$ must be contained in at least two distinct parts of $V_{3}, V_{4}, \ldots, V_{k}$.
Combining with $|V_2|\geq |V_3|\geq \cdots \geq |V_{k}|$, we have $|V_3|\geq \max_{i\in [t]}|V(H_i)|$, $|V_4|\geq \min_{i\in [t]}|V(H_i)|$ and $|V_3\cup \cdots \cup V_k|\geq |V(H)|$.
\end{proof}

In 2019, Li, Wang and Liu~\cite{LiWL} characterized the structures of edge-colored complete bipartite graphs without rainbow $P_4$ or $P_5$.\footnote{The initial assertions made by Li, Wang and Liu are somewhat complicated (see \cite[Theorems~1.4 and 1.5]{LiWL}). Lemmas~\ref{th:LiWLP4} and \ref{th:LiWLP5} serve as refined reformulations of their results.}

\begin{lemma}{\normalfont (\cite{LiWL})}\label{th:LiWLP4}
Let $K_{n,n}$ $(n \geq 2)$ be edge-colored with exactly $k$ $(k\geq 3)$ colors such that it contains no rainbow $P_4$.
Let $U$ and $V$ be the partite sets.
Then $U$ can be partitioned into $k$ parts $U_1, U_2, \ldots, U_k$ such that $c(U_i, V)=i$ for every $i\in [k]$ $($renumbering the colors or replacing $U$ and $V$ if necessary$)$.
\end{lemma}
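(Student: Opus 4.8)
The plan is to reduce the statement to a \emph{dichotomy}: in any edge-coloring of $K_{n,n}$ with exactly $k\geq 3$ colors and no rainbow $P_4$, at least one of the two partite sets has the property that every one of its vertices sends a monochromatic star to the opposite side. Once this is in hand, after possibly swapping $U$ and $V$ we may assume every $u\in U$ satisfies $|C(u,V)|=1$; writing $c_u$ for the unique color at $u$ and setting $U_i\colonequals\{u\in U\colon c_u=i\}$ gives a partition of $U$ with $c(U_i,V)=i$. Since in $K_{n,n}$ every edge is incident with $U$, each of the $k$ used colors occurs on some such star, so each class is nonempty; renumbering the colors then yields the claimed $U_1,\ldots,U_k$.

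The engine of the argument is a propagation lemma powered by the no-rainbow-$P_4$ hypothesis. Suppose some $u_0\in U$ is not monochromatic, witnessed by $v_1,v_2\in V$ with $a\colonequals c(u_0v_1)$ and $b\colonequals c(u_0v_2)$, $a\neq b$. I would first show that for every $v\in V$, setting $\gamma\colonequals c(u_0v)$, exactly one of the following holds: if $\gamma\notin\{a,b\}$ then $v$ is monochromatic of color $\gamma$, whereas if $\gamma\in\{a,b\}$ then $C(v,U)\subseteq\{a,b\}$. The first case comes from examining, for an arbitrary $u'\in U\setminus\{u_0\}$, the two paths $v_1u_0vu'$ and $v_2u_0vu'$: the absence of a rainbow $P_4$ forces $c(u'v)\in\{a,\gamma\}\cap\{b,\gamma\}=\{\gamma\}$. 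The second case is analogous, using the single path $u'vu_0v_j$, where $v_j$ is whichever of $v_1,v_2$ satisfies $c(u_0v_j)\neq\gamma$ (this choice makes $v\neq v_j$ automatic). These are short computations; the one point to get right is that the four vertices in each path are genuinely distinct, so that each really is a $P_4$.

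With this lemma, the dichotomy follows by a symmetric squeeze. Suppose for contradiction that \emph{neither} side is monochromatic: pick $u_0\in U$ with $|C(u_0,V)|\geq 2$, giving $a,b$ as above, and pick $v_0\in V$ with $|C(v_0,U)|\geq 2$. Applying the lemma to $v_0$, it cannot be monochromatic, so we must be in the case $c(u_0v_0)\in\{a,b\}$ and $C(v_0,U)\subseteq\{a,b\}$; together with $|C(v_0,U)|\geq 2$ this forces $C(v_0,U)=\{a,b\}$. Now I would run the same lemma from the $V$-side, taking $v_0$ as the non-monochromatic witness: since every $u\in U$ has $c(uv_0)\in\{a,b\}$, the $V$-side version yields $C(u,V)\subseteq\{a,b\}$ for all $u\in U$. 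Hence every edge of $K_{n,n}$ is colored within $\{a,b\}$, contradicting $k\geq 3$.

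I expect this symmetric collapse, rather than the local path estimates, to be the main obstacle. The delicate part is conceptual: bootstrapping a single non-monochromatic vertex on \emph{each} side into the global conclusion that only two colors are used. The $P_4$-extension computations in the propagation lemma are routine once the path-distinctness bookkeeping is handled, but the interplay that turns the two one-sided failures into the two-color contradiction is where the real content of the lemma lies.
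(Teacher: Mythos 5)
Your proof is correct, but note that there is nothing in the paper to compare it against: the paper does not prove this lemma, it imports it from Li--Wang--Liu~\cite{LiWL} (the footnote explains it is a reformulation of their Theorem~1.4), so your proposal supplies a self-contained argument where the paper has only a citation. Checking your steps: the propagation lemma is sound --- for $\gamma\notin\{a,b\}$ the two paths $v_1u_0vu'$ and $v_2u_0vu'$ force $c(u'v)\in\{a,\gamma\}\cap\{b,\gamma\}=\{\gamma\}$, and in the other case your choice of $v_j$ with $c(u_0v_j)\neq\gamma$ does make the four vertices of $u'vu_0v_j$ automatically distinct, which is the only bookkeeping hazard; both uses need $n\geq 2$ exactly where you invoke an arbitrary $u'\neq u_0$. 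The symmetric squeeze is also right: a non-monochromatic $v_0$ cannot land in the first case of the lemma, so $C(v_0,U)=\{a,b\}$, and re-running the lemma from the $V$-side with witness $v_0$ traps every $u\in U$ in the second case (since $c(uv_0)\in\{a,b\}$ always), giving $C(u,V)\subseteq\{a,b\}$ for all $u$, hence at most two colors in total, contradicting that exactly $k\geq 3$ colors are used. The final point --- that each class $U_i$ is nonempty because every edge of $K_{n,n}$ meets $U$, so each of the $k$ used colors is the star color of some $u\in U$ --- is what makes the partition have exactly $k$ parts, and you state it correctly. So the argument is complete; it is also pleasantly elementary, deriving the structure directly from two $P_4$-extension computations rather than from the longer case analysis in~\cite{LiWL}.
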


\begin{lemma}{\normalfont (\cite{LiWL})}\label{th:LiWLP5}
Let $K_{n,n}$ $(n \geq 3)$ be edge-colored with exactly $k$ $(k\geq 4)$ colors such that it contains no rainbow $P_5$.
Let $U$ and $V$ be the partite sets.
Then one of the following holds $($renumbering the colors or replacing $U$ and $V$ if necessary$)$:
\begin{itemize}
\item[{\rm (a)}] $U$ can be partitioned into two parts $U_1, U_2$ with $|U_1|\geq 1$ and $|U_2|\geq 0$, and $V$ can be partitioned into $k$ parts $V_1, V_2, \ldots, V_k$ with $|V_1|\geq 0$ and $|V_i|\geq 1$ for every $i\in \{2, 3, \ldots, k\}$, such that $c(U_2, V)=1$ and $c(U_1, V_i)=i$ for every $i\in [k]$;

\item[{\rm (b)}] $U$ can be partitioned into $k-1$ parts $U_2, U_3, \ldots, U_k$, and $V$ can be partitioned into $k-1$ parts $V_2, V_3, \ldots, V_k$, such that $\{i\}\subseteq C(U_i, V_i)\subseteq \{1,i\}$ for every $i\in \{2, 3, \ldots, k\}$, and $c(U_i, V_j)=1$ for every $2\leq i\neq j\leq k$;

\item[{\rm (c)}] $n\in \{3,4\}$, $k=4$, and each color induces a matching.
\end{itemize}
\end{lemma}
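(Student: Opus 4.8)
The plan is to split on whether the given coloring contains a rainbow $P_4$. First I would dispose of the case with \emph{no} rainbow $P_4$: here Lemma~\ref{th:LiWLP4} applies (its hypotheses $k\geq 3$ and $n\geq 2$ are met), partitioning $U$ into $U_1,\ldots,U_k$ with $c(U_i,V)=i$, so that every vertex of $U$ is monochromatic toward $V$ while every vertex of $V$ sees all realized colors. Interchanging the names of $U$ and $V$ and taking $U_2=\varnothing$, this is precisely configuration~(a). Hence it remains to treat the case in which a rainbow $P_4$ exists, and I expect this branch to produce configurations~(b) and~(c).

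So fix a rainbow $P_4$, say $w_1w_2w_3w_4$ with $w_1,w_3\in U$, $w_2,w_4\in V$ and colors $1,2,3$ (after relabeling). The first step is to extract the local constraints forced by the absence of a rainbow $P_5$: appending any vertex to either end of this path must repeat a color, which restricts the colors on the edges at $w_1$ and at $w_4$ to lie in $\{1,2,3\}$, and, applying the same argument across \emph{all} rainbow $P_4$'s in the coloring, imposes strong restrictions on the set of colors incident to each vertex. The target structure is to show that one color — which I will call color $1$ — is \emph{dominant}: every vertex is incident to color $1$ together with at most one further color, and any two vertices carrying distinct non-dominant colors $i\neq j$ are joined by a color-$1$ edge. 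Defining $U_i$ (respectively $V_i$) to be the vertices of $U$ (respectively $V$) incident to color $i$, this is exactly $C(U_i,V_i)\subseteq\{1,i\}$ together with $c(U_i,V_j)=1$ for $i\neq j$, i.e.\ configuration~(b).

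The hard part will be the case analysis needed to establish dominance of color $1$ and to isolate the exceptional colorings. Because ``no rainbow $P_5$'' is a condition on paths with four edges, forbidding a rainbow extension couples the colors of edges lying two steps apart, so one cannot argue vertex by vertex; instead the color degrees of vertices on \emph{both} sides of the bipartition must be tracked simultaneously. I expect the dominance argument to break down exactly when the coloring is highly symmetric — when every color class is a perfect matching and no single color can be singled out — and a direct check should confine such colorings to very small $n$. I would handle these residual colorings separately, verifying that they force $n\in\{3,4\}$, $k=4$ with each color inducing a matching, which is configuration~(c). Finally, since the statement only asserts that \emph{some} one of the configurations holds, I would not need to worry about colorings that simultaneously satisfy more than one of~(a)--(c).
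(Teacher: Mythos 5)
First, a point of comparison: the paper does not prove this lemma at all --- it is quoted from \cite{LiWL} (see the footnote attached to it, which describes Lemmas~\ref{th:LiWLP4} and \ref{th:LiWLP5} as reformulations of Theorems~1.4 and 1.5 of that reference). So your attempt can only be judged on its own terms. Your first branch is sound: if the coloring has no rainbow $P_4$, Lemma~\ref{th:LiWLP4} applies, and after exchanging the roles of $U$ and $V$ and taking $U_2=\varnothing$ one lands exactly in configuration~(a).

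The gap is in the second branch. You claim that once a rainbow $P_4$ exists, the outcome must be (b) or (c), and the entire plan rests on proving that some color is ``dominant'' (every vertex incident to color $1$ plus at most one further color), with the matching colorings of (c) as the only exceptions. This dichotomy is false: configuration~(a) with \emph{both} $U_1$ and $U_2$ nonempty contains rainbow $P_4$'s yet is neither (b) nor (c). Concretely, take $n=3$, $k=4$, $U=\{u_1,u_2,u_3\}$, $V=\{v_2,v_3,v_4\}$, color $u_1v_i$ with color $i$ for $i\in\{2,3,4\}$, and give every edge incident to $u_2$ or $u_3$ color $1$. This uses exactly four colors, has no rainbow $P_5$ (it is precisely configuration~(a) with $U_1=\{u_1\}$), and $v_2u_1v_3u_2$ is a rainbow $P_4$ with colors $2,3,1$. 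However, $u_1$ is incident to three colors, so no color is dominant in your sense and (b) cannot hold (neither renumbering colors nor swapping $U$ and $V$ changes color degrees), and color $1$ induces a $K_{2,3}$, not a matching, so (c) fails as well. Hence the target you set for the rainbow-$P_4$ branch is unattainable: that branch must also be able to output configuration~(a) with $U_2\neq\varnothing$, which means the case analysis has to distinguish ``hub'' structures (a vertex meeting at least three colors, which forces type (a)) from dominant-color structures (b) and from the small matching colorings (c). Since the proposal neither anticipates this case nor actually carries out the case analysis it defers --- the ``hard part'' is only described, not executed --- it does not constitute a proof, and as planned it would fail at exactly this point.
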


We shall also use the following structural result.

\begin{observation}\label{obs:embedding}
Let $G_1, G_2$ be two graphs of chromatic number 3.
%Let $s_1, s_2, s_3$ $($resp., $t_1, t_2, t_3$$)$ be the sizes of color classes in a proper 3-vertex-coloring of $G_1$ $($resp., $G_2$$)$ with $s_1\geq s_2\geq s_3=\sigma(G_1)$ $($resp., $t_1\geq t_2\geq t_3=\sigma(G_2)$$)$.
Let $s_1, s_2, s_3$ $($resp., $t_1, t_2, t_3$$)$ be the sizes of color classes in a proper 3-vertex-coloring of $G_1$ $($resp., $G_2$$)$ with $s_1\geq s_2\geq s_3$ $($resp., $t_1\geq t_2\geq t_3$$)$.
If one of the following two statements holds, then $K_{x,y,z}$ contains $G_1\cup G_2$ as a subgraph:
\begin{itemize}
\item[{\rm (i)}] $x\geq \max\{s_1+t_2, t_1+s_2\}$, $y\geq \min\{s_1+t_2, t_1+s_2\}$ and $z\geq s_3+t_3$, or
\item[{\rm (ii)}] $x\geq s_1+t_1$, $y\geq s_2+t_2$ and $z\geq s_3+t_3$.
\end{itemize}
\end{observation}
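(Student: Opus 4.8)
\emph{Plan.} The plan is to embed $G_1$ and $G_2$ into $K_{x,y,z}$ on disjoint vertex sets, using the fact that a proper $3$-vertex-coloring is exactly a homomorphism into a complete tripartite graph. Fix the given colorings, with color classes $A_1, A_2, A_3$ of $G_1$ of sizes $s_1 \geq s_2 \geq s_3$ and color classes $B_1, B_2, B_3$ of $G_2$ of sizes $t_1 \geq t_2 \geq t_3$. The first step is to record the embedding principle: since each coloring is proper, every edge of $G_i$ runs between two distinct classes, so whenever we assign the three classes of $G_i$ injectively to the three parts of $K_{x,y,z}$, all edges of $G_i$ land between distinct parts and are therefore present in $K_{x,y,z}$. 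Hence $G_i$ embeds as long as each part is large enough to hold the class assigned to it, and the two copies are vertex-disjoint precisely when, in every part, the total number of vertices assigned there (one class from $G_1$ plus one from $G_2$) does not exceed the size of that part.

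This reduces the statement to choosing, in each of the two configurations, a suitable pairing of the classes of $G_1$ with those of $G_2$ across the three parts of sizes $x$, $y$, $z$. For part~(ii), I would place $A_1, B_1$ in the part of size $x$, the classes $A_2, B_2$ in the part of size $y$, and $A_3, B_3$ in the part of size $z$; the resulting loads are $s_1+t_1$, $s_2+t_2$ and $s_3+t_3$, which are accommodated precisely by the hypotheses of~(ii). For part~(i), I would instead cross the two largest classes: keep $A_3$ and $B_3$ together, but pair $A_1$ with $B_2$ and $A_2$ with $B_1$. The three loads then become $s_1+t_2$, $t_1+s_2$ and $s_3+t_3$; putting the larger of the first two onto the part of size $x$ and the smaller onto the part of size $y$, these fit exactly under the hypotheses of~(i).

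There is no serious obstacle in this argument; it is a direct assignment computation once the embedding principle is in place. The only points requiring care are that properness forces each part to receive at most one class from each $G_i$ (so that the within-part non-edges of $K_{x,y,z}$ never conflict with an edge we must embed), and that the vertex-disjointness of the two copies is correctly identified with the additive load condition on each part. Checking that the two load vectors match the two hypotheses is then routine, and the freedom to send the max of $\{s_1+t_2, t_1+s_2\}$ to the part of size $x$ in case~(i) is exactly what the asymmetric constraints on $x$ and $y$ provide.
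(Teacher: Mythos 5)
Your proposal is correct and follows essentially the same route as the paper: both proofs embed each $G_i$ by assigning its three color classes to the three parts of $K_{x,y,z}$, using the crossed pairing ($s_1$ with $t_2$, $s_2$ with $t_1$, $s_3$ with $t_3$) for case (i) with the larger combined load sent to the part of size $x$, and the aligned pairing for case (ii). The paper writes out only case (i) and notes (ii) is analogous, exactly as your assignment computation shows.
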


\begin{proof} The proofs of (i) and (ii) follow from a similar argument.
For brevity, we only present the proof of (i).
Let $F$ be a $K_{x,y,z}$ with partite sets $X$, $Y$ and $Z$, where $|X|=x$, $|Y|=y$ and $|Z|=z$.
Without loss of generality, we may assume that $s_1+t_2\geq t_1+s_2$.
Then $|X|=x\geq s_1+t_2$ and $|Y|=y\geq t_1+s_2$.
We choose disjoint subsets $S_1, T_2\subseteq X$, $S_2, T_1\subseteq Y$ and $S_3, T_3\subseteq Z$ with $|S_i|=s_i$ and $|T_i|=t_i$ for each $i\in [3]$.
Then $F[S_1\cup S_2\cup S_3]$ contains $G_1$ as a subgraph, and $F[T_1\cup T_2\cup T_3]$ contains $G_2$ as a subgraph.
Hence, $F$ contains a $G_1\cup G_2$.
\end{proof}

\section{Proofs of results related to $f(H,P_5)\leq R_k(H)$}
\label{sec:proof-Rk}

In this section, we present our proofs of Theorems~\ref{th:chro} and \ref{th:homology}.

\vspace{0.3cm}
\noindent{\bf Proof of Theorem~\ref{th:chro}.}~
%\begin{proof}[Proof of Theorem~\ref{th:chro}]
For a contradiction, suppose that $F$ is an edge-colored $K_n$ without rainbow $P_5$ or monochromatic $H$, where $n=R_{\chi(H)+1}(H)$.
Let $[k]$ be the set of colors used on $E(F)$.
Then $k\geq \chi(H)+2$, since otherwise $F$ contains a monochromatic copy of $H$.
By Lemma~\ref{le:Caseb}, we can partition $V(F)$ into $k-1$ parts $V_2, V_3, \ldots, V_k$ satisfying
\begin{itemize}
\item[{\rm (i)}] $\{i\}\subseteq C(V_i)\subseteq \{1,i\}$ for every $i\in \{2, 3, \ldots, k\}$, and
\item[{\rm (ii)}] $c(V_i, V_j)=1$ for every $2\leq i<j\leq k$.
\end{itemize}

Choose a partition $I_1, \ldots, I_{\chi(H)}$ of $\{2, 3, \ldots, k\}$ and correspondingly define $A_1=\bigcup_{i\in I_1}V_i, \ldots,$ $A_{\chi(H)}=\bigcup_{i\in I_{\chi(H)}}V_i$ such that the following conditions hold:
%\begin{itemize}
%\item[(1)] $|A_1|\geq \cdots \geq |A_{\chi(H)}|$,
%\item[(2)] $|A_{\chi(H)}|$ is largest subject to (1),
%\item[(3)] $|A_{\chi(H)-1}|$ is largest subject to (1),(2),
%\item[$\vdots$]
%\item[($\chi(H)$)] $|A_{1}|$ is largest subject to $(1), \ldots,(\chi(H)-1)$.
%\end{itemize}
\begin{itemize}
\item[] (1) ~$|A_1|\geq \cdots \geq |A_{\chi(H)}|$,
\item[] (2) ~$|A_{\chi(H)}|$ is largest subject to (1),
\item[] (3) ~$|A_{\chi(H)-1}|$ is largest subject to (1) and (2),
\item[] ~~$\vdots$
\item[] ($\chi(H)$) ~$|A_{2}|$ is largest subject to $(1), (2), \ldots,(\chi(H)-1)$.
\end{itemize}

\begin{claim}\label{cl:chro-1}
If $|I_{\ell}|\geq 2$ for some $1\leq \ell \leq \chi(H)$, then $|A_{\chi(H)}|\geq \frac{1}{2}|A_{\ell}|$.
\end{claim}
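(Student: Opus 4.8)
The plan is to argue by extremality. Conditions (1)--($\chi(H)$) say precisely that, among all partitions of $\{2,3,\ldots,k\}$ into $\chi(H)$ bins, the chosen one makes the \emph{ascending}-sorted sequence of bin sizes lexicographically maximal: we first maximize the smallest size $|A_{\chi(H)}|$, then the next smallest $|A_{\chi(H)-1}|$, and so on. Thus the whole strategy is to show that if the conclusion failed, we could move a single part $V_i$ from one bin to another and produce a lexicographically larger size sequence, contradicting this maximality.

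Concretely, suppose toward a contradiction that $|A_{\chi(H)}|<\tfrac{1}{2}|A_\ell|$ for some $\ell$ with $|I_\ell|\geq 2$. Among the parts $V_i$ with $i\in I_\ell$, let $V_m$ be one of smallest size. Since $A_\ell$ is a union of at least two parts and $V_m$ is smallest among them, the remaining parts have total size at least $|V_m|$, whence $|A_\ell|\geq 2|V_m|$, i.e.\ $|V_m|\leq\tfrac{1}{2}|A_\ell|$. I would then move $V_m$ out of $A_\ell$ and into the smallest bin $A_{\chi(H)}$. After this exchange the bin that was $A_\ell$ has size $|A_\ell|-|V_m|\geq\tfrac{1}{2}|A_\ell|>|A_{\chi(H)}|$, the bin that was $A_{\chi(H)}$ grows to $|A_{\chi(H)}|+|V_m|>|A_{\chi(H)}|$, and every other bin is untouched.

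The step needing the most care — and the main obstacle — is verifying that this exchange strictly increases the ascending size vector even when several bins tie for the minimum. If $A_{\chi(H)}$ is the unique smallest bin, then after the move every bin exceeds the old minimum $|A_{\chi(H)}|$, so the new minimum is strictly larger, contradicting condition (2). If instead $p\geq 2$ bins share the minimum size $|A_{\chi(H)}|$, note that $A_\ell$ is not among them (since $|A_\ell|>2|A_{\chi(H)}|>|A_{\chi(H)}|$); hence the move leaves exactly $p-1$ bins at value $|A_{\chi(H)}|$ while the new $A_{\chi(H)}$, the reduced $A_\ell$, and all other bins strictly exceed $|A_{\chi(H)}|$. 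Therefore the ascending-sorted size vector agrees with the old one in its first $p-1$ entries and is strictly larger in the $p$-th entry, contradicting the lexicographic maximality encoded by conditions (2)--($\chi(H)$). Either way we obtain a contradiction, so $|A_{\chi(H)}|\geq\tfrac{1}{2}|A_\ell|$.

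Two routine checks I would record along the way: each $V_i$ is nonempty (indeed $|V_i|\geq 2$, since $i\in C(V_i)$ forces an edge of color $i$ inside $V_i$), so the moved part genuinely enlarges $A_{\chi(H)}$; and since $k-1\geq\chi(H)+1>\chi(H)$ there are enough parts for every bin to be nonempty, so the extremal partition is well-defined and the minimum bin is not vacuous. Beyond the tie-breaking bookkeeping, the exchange argument itself is immediate.
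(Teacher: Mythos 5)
Your proof is correct and is essentially the paper's own argument: both refute the hypothesis $|A_{\chi(H)}|<\tfrac{1}{2}|A_\ell|$ by an exchange against the lexicographic maximality encoded in conditions (2)--($\chi(H)$), moving a smallest part of $A_\ell$ (which has size at most $\tfrac{1}{2}|A_\ell|$) into a minimum-size bin and checking that both affected bins then strictly exceed the old minimum while the other tied minimum bins are untouched, so the ascending sorted size vector strictly improves. The only cosmetic difference is that the paper inserts the part into the \emph{first} tied minimum bin $A_{i_0}$ and formalizes the re-sorting with a permutation $\varphi$, whereas you insert it into $A_{\chi(H)}$ and handle the tie-counting directly.
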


\begin{proof}
Suppose for a contradiction that there exists an $\ell$ with $|I_{\ell}|\geq 2$ and $|A_{\chi(H)}|< \frac{1}{2}|A_{\ell}|$.
Let $i_0$ be the smallest index with $|A_{i_0}|=|A_{i_0+1}|=\cdots =|A_{\chi(H)}|$.
Then $\ell < i_0 \leq \chi(H)$, $|A_{i_0-1}|>|A_{i_0}|$ and $|A_{i_0}|< \frac{1}{2}|A_{\ell}|$.
Since $|I_{\ell}|\geq 2$, there exists a $j\in I_{\ell}$ with $|V_j|\leq \frac{1}{2}|A_{\ell}|$.
Let
$$I'_i=
\left\{
   \begin{aligned}
    &I_i & & \mbox{for $i\in \{1,2, \ldots, \chi(H)\}\setminus \{\ell,i_0\}$},\\
    &I_i\setminus \{j\} & & \mbox{for $i=\ell$},\\
    &I_i\cup \{j\} & & \mbox{for $i=i_0$}.
   \end{aligned}
   \right.$$
Let $\varphi$ be a permutation of $1,2, \ldots, \chi(H)$ such that $\big|\bigcup_{i\in I'_{\varphi(1)}}V_i\big|\geq \cdots \geq \big|\bigcup_{i\in I'_{\varphi(\chi(H))}}V_i\big|.$

If $|A_{\ell}\setminus V_j|\leq |A_{i_0}\cup V_j|$, then since $|A_{i_0}|< \frac{1}{2}|A_{\ell}|$ and $|V_j|\leq \frac{1}{2}|A_{\ell}|$, we have
$$|A_{i_0}\cup V_j|\geq |A_{\ell}\setminus V_j|= |A_{\ell}|-|V_j|\geq |A_{\ell}|-\frac{1}{2}|A_{\ell}|=\frac{1}{2}|A_{\ell}| > |A_{i_0}|.$$
If $|A_{\ell}\setminus V_j|> |A_{i_0}\cup V_j|$, then $|A_{\ell}\setminus V_j|> |A_{i_0}\cup V_j|> |A_{i_0}|.$
In both cases, we have $\big|\bigcup_{i\in I'_{\varphi(i_0)}}V_i\big|>|A_{i_0}|$, and $|A_{i_0+1}|=\cdots =|A_{\chi(H)}|=\big|\bigcup_{i\in I'_{\varphi(\chi(H))}}V_i\big|=\cdots=\big|\bigcup_{i\in I'_{\varphi(i_0+1)}}V_i\big|$ (when $\chi(H)\geq i_0+1$).
This contradicts the choice of the partition $I_1, \ldots, I_{\chi(H)}$ of $\{2, 3, \ldots, k\}$.
\end{proof}

Let $F'$ be a $(\chi(H)+1)$-edge-coloring of $K_n$ obtained from $F$ by recoloring all edges of colors in $I_i$ with color $i+1$ for every $1\leq i\leq \chi(H)$.
Since $n=R_{\chi(H)+1}(H)$, there is a monochromatic copy of $H$ in $F'$, say of color $m$.
Since $F$ contains no monochromatic copy of $H$, we have $m \geq 2$ and $|I_{m-1}|\geq 2$.
By Claim~\ref{cl:chro-1}, we have $|A_{\chi(H)}|\geq \frac{1}{2}|A_{m-1}|$.
Hence, $|A_1|\geq \cdots \geq |A_{m-1}|\geq |V(H)|$ and $|A_{m}|\geq \cdots \geq |A_{\chi(H)}|\geq \frac{1}{2}|A_{m-1}|\geq \frac{1}{2}|V(H)|$.
Let $s_1, s_2, \ldots, s_{\chi(H)}$ be the sizes of color classes in a proper $\chi(H)$-vertex-coloring of $H$, and assume that $s_1\geq s_2\geq \cdots\geq s_{\chi(H)}$.
Then $|A_1|\geq |V(H)| \geq s_1$ and $|A_i|\geq \frac{1}{2}|V(H)|\geq s_2$ for each $2\leq i\leq \chi(H)$.
Thus the edges between the parts $A_1, A_2, \ldots, A_{\chi(H)}$ in $F$ form a monochromatic graph of color 1 that contains $K_{s_1, s_2, \ldots, s_2}$ (and thus $H$) as a subgraph, a contradiction.
This completes the proof of Theorem~\ref{th:chro}.
%\end{proof}
\hfill$\square$
\vspace{0.2cm}

\vspace{0.3cm}
\noindent{\bf Proof of Theorem~\ref{th:homology}.}~
%\begin{proof}[Proof of Theorem~\ref{th:homology}]
Let $H=G_1\cup G_2\cup \cdots \cup G_t$.
Since $G_1, G_2, \ldots, G_t$ is $p$-homological, we may assume that for every $i\in [t]$, $G_i$ admits a proper $p$-vertex-coloring such that, for each $j\in [p]$, the number of vertices of color $j$ in $G_i$ is $s_j$.
Then $G_1, G_2, \ldots, G_t$ are subgraphs of $K_{s_1, \ldots, s_p}$.
Let $s=s_1+s_2+\cdots+s_p$, so $|V(G_1)|=|V(G_2)|=\cdots=|V(G_t)|=s$.

For a contradiction, suppose that $F$ is an edge-colored $K_n$ without rainbow $P_5$ or monochromatic $H$, where $n=R_{k}(H)$ and $k=\max\{t,p,3\}$.
Let $[k']$ be the set of colors used on $E(F)$.
Then $k'\geq k+1=\max\{t,p,3\}+1$, since otherwise $F$ contains a monochromatic copy of $H$.
By Lemma~\ref{le:Caseb}, we can partition $V(F)$ into $k'-1$ parts $V_2, V_3, \ldots, V_{k'}$ satisfying
\begin{itemize}
\item[{\rm (i)}] $\{i\}\subseteq C(V_i)\subseteq \{1,i\}$ for every $i\in \{2, 3, \ldots, k'\}$, and
\item[{\rm (ii)}] $c(V_i, V_j)=1$ for every $2\leq i<j\leq k'$.
\end{itemize}
Without loss of generality, we may assume that $|V_2|\geq |V_3|\geq \cdots \geq |V_{k'}|$.

Let $F'$ be a $k$-edge-coloring of $K_n$ obtained from $F$ by recoloring all edges of colors in $\{k+1, \ldots, k'\}$ with color $k$.
Since $n=R_{k}(H)$, there is a monochromatic copy of $H$ in $F'$.
Denote such a monochromatic $H$ by $A$.
Since $F$ contains no monochromatic copy of $H$, the color of $A$ must be $k$ and the components of $A$ must appear in at least two distinct parts from $V_k, V_{k+1}, \ldots, V_{k'}$.
Let $i_0$ be the largest index such that $A$ has a component within $V_{i_0}$.
Then $i_0\geq k+1$ and $|V_2|\geq |V_3|\geq \cdots \geq |V_{k+1}|\geq |V_{i_0}|\geq s.$

For each $i\in \{2, 3, \ldots, k+1\}$, let $V_{i,1}, \ldots, V_{i,p}$ be disjoint subsets of $V_i$ with $|V_{i,j}|=s_j$ for each $j\in [p]$.
Let\footnote{For clarity, we remark here that when $k=p$, only $W_{1}, W_2, \ldots, W_p(=W_k)$ exist, while all other $W_i$ do not exist.}
\begin{align*}
  &~W_1=V_{2,1}\cup V_{3,2}\cup \cdots \cup V_{p+1,p}, \\
  &~W_2=V_{2,2}\cup V_{3,3}\cup \cdots \cup V_{p,p}\cup V_{k+1,1}, \\
  &~W_3=V_{2,3}\cup V_{3,4}\cup \cdots \cup V_{p-1,p}\cup V_{k,1}\cup V_{k+1,2}, \\
  &~~~\vdots ~ \\
  &~W_p=V_{2,p}\cup V_{k-p+3,1}\cup V_{k-p+4,2}\cup \cdots \cup V_{k+1,p-1}, \\
  &~W_{p+1}=V_{k-p+2,1}\cup V_{k-p+3,2}\cup \cdots \cup V_{k+1,p}, \\
  &~W_{p+2}=V_{k-p+1,1}\cup V_{k-p+2,2}\cup \cdots \cup V_{k,p}, \\
  &~~~\vdots ~ \\
  &~W_{k}=V_{3,1}\cup V_{4,2}\cup \cdots \cup V_{p+2,p}.
\end{align*}
Then $W_1, W_2, \ldots, W_k$ are disjoint vertex subsets.
Moreover, for each $i\in [k]$, $F[W_i]$ contains a monochromatic $K_{s_1, \ldots, s_p}$ of color 1.
Since $k\geq t$, there is a monochromatic $G_1\cup G_2\cup \cdots \cup G_t$ in $F$.
This contradiction completes the proof of Theorem~\ref{th:homology}.
%\end{proof}
\hfill$\square$
%\vspace{0.2cm}

\section{Proofs of results for the disjoint unions of a connected graph and its subgraphs}
\label{sec:proof-subgraph}

In this section, we present our proofs of Theorems~\ref{th:union-1}, \ref{th:union-2} and Corollary~\ref{cor:union-1}.
We first prove Theorem~\ref{th:union-2} and Corollary~\ref{cor:union-1}.
In our proof of Theorem~\ref{th:union-2}, we shall apply Lemma~\ref{le:CH}.
To this ends, we first prove the following result on $R_2(\mathscr{C}(G\cup G_1\cup \cdots \cup G_t))$.

\begin{lemma}\label{le:union-2}
Let $G, G_1, \ldots, G_t$ be connected graphs with $t\leq \frac{(\chi(G)-2)(R_2(G)-1)+\sigma(G)-1}{\chi(G)|V(G)|}$ and $G_1, \ldots, G_t\subseteq G$.
Then $R_2(\mathscr{C}(G\cup G_1\cup \cdots \cup G_t))\leq (\chi(G)-1)(R_2(G)-1)+\sigma(G).$
\end{lemma}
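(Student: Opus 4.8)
\textbf{Proof plan for Lemma~\ref{le:union-2}.}

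The plan is to show that every $2$-edge-colored $K_n$ with $n=(\chi(G)-1)(R_2(G)-1)+\sigma(G)$ contains a monochromatic connected graph that has $G\cup G_1\cup\cdots\cup G_t$ as a subgraph; since every graph in $\mathscr{C}(G\cup G_1\cup\cdots\cup G_t)$ is connected and contains $G\cup G_1\cup\cdots\cup G_t$, producing one such monochromatic connected supergraph suffices to bound $R_2(\mathscr{C}(\cdot))$ from above. The natural first step is to invoke the Burr-type lower-bound machinery in reverse: the value $(\chi(G)-1)(R_2(G)-1)+\sigma(G)$ is exactly the bound appearing in Lemma~\ref{le:R2}~(i), so I expect the key is that $n$ is large enough to force a \emph{monochromatic connected copy of $G$ together with a large monochromatic leftover} in the same color. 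Concretely, I would first argue that in any $2$-edge-coloring of $K_n$ there is a monochromatic connected copy $A$ of $G$, say in color $1$, since $n\geq R_2(G)$.

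The heart of the argument is then to find, in color $1$ and \emph{connected to $A$}, enough additional monochromatic structure to embed $G_1,\ldots,G_t$. Here I would exploit that each $G_i$ is a subgraph of $G$, so it suffices to find $t$ further monochromatic copies of $G$ (or of the relevant $G_i$) in color $1$, all attached to $A$ so that the whole union is connected. The counting condition $t\leq \frac{(\chi(G)-2)(R_2(G)-1)+\sigma(G)-1}{\chi(G)|V(G)|}$ is precisely what should guarantee the room: rearranging, $\chi(G)|V(G)|\,t \leq (\chi(G)-2)(R_2(G)-1)+\sigma(G)-1 < n$, so after deleting the at most $\chi(G)|V(G)|\cdot(\text{a few})$ vertices used by $A$ and by the connecting edges, a complete subgraph of order at least $R_2(G)$ survives, letting me extract the next monochromatic $G$ and iterate. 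I would set this up as an iterative extraction: at each of the $t$ stages, delete the vertices already used, verify the remaining clique still has order at least $R_2(G)$ using the displayed inequality, and pull out a new monochromatic $G$ in the \emph{majority} color; a pigeonhole/clean-up step ensures all extracted copies can be taken in the same color $1$ as $A$ and joined to it through color-$1$ edges.

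The step I expect to be the main obstacle is controlling \emph{connectivity and color consistency simultaneously}: extracting $t+1$ monochromatic copies of $G$ is easy by repeated application of $R_2(G)$, but ensuring they all lie in a single color \emph{and} that the host graph provides color-$1$ edges linking them into one connected monochromatic subgraph is delicate, since a priori the successive copies could appear in alternating colors. I anticipate resolving this by a more careful structural accounting—rather than naively iterating, I would isolate a single large monochromatic-$1$ connected region (for instance via the partite structure underlying the Burr bound, where $\chi(G)-1$ blocks of size $R_2(G)-1$ are joined in one color) and embed the entire union $G\cup G_1\cup\cdots\cup G_t$ inside it at once, using the fact that $G$ and all $G_i$ are $\chi(G)$-colorable and the block sizes accommodate $t+1$ disjoint copies by the counting hypothesis. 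Finally, I would confirm the resulting connected monochromatic graph lies in $\mathscr{C}(G\cup G_1\cup\cdots\cup G_t)$, yielding the claimed upper bound on $R_2(\mathscr{C}(G\cup G_1\cup\cdots\cup G_t))$.
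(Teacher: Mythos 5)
Your plan correctly identifies the crux---forcing all the extracted copies of $G$ into one color \emph{and} linking them into a single connected monochromatic subgraph---but it does not resolve it, and the two mechanisms you gesture at do not work as stated. Repeatedly applying $R_2(G)$ only yields disjoint monochromatic copies in \emph{some} color each time; pigeonhole can make many of them the same color, but nothing then provides edges of that color joining them (two color-$1$ copies may have all cross edges in color $2$), so your ``pigeonhole/clean-up step'' is precisely the missing proof content. Your fallback---isolating ``the partite structure underlying the Burr bound, where $\chi(G)-1$ blocks of size $R_2(G)-1$ are joined in one color''---confuses a lower-bound \emph{construction} with a structure guaranteed to exist in an arbitrary $2$-coloring; an adversary's coloring need not contain any such blocks.

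The paper closes this gap with two ideas absent from your proposal. First, by the Erd\H{o}s--Rado observation, every $2$-edge-colored complete graph contains a monochromatic spanning tree, say in color $1$; hence connectivity in color $1$ is free, and avoiding every graph of $\mathscr{C}(G\cup G_1\cup\cdots\cup G_t)$ forces $F$ to contain no color-$1$ copy of $G\cup G_1\cup\cdots\cup G_t$ whatsoever. Deleting a \emph{maximal} family of disjoint color-$1$ copies of $G$ (there are at most $t$ of them) leaves a complete graph $F'$ with \emph{no} color-$1$ copy of $G$ at all---much stronger than pigeonhole---and the hypothesis on $t$ gives $|V(F')|\geq R_2(G)+t(\chi(G)-1)|V(G)|$, so one can extract $t(\chi(G)-1)+1$ disjoint monochromatic copies of $G$, necessarily all of color $2$. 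Second, connectivity in color $2$ is then obtained via an auxiliary $2$-colored complete graph on these copies (an auxiliary edge gets color $1$ iff \emph{all} cross edges between the two copies have color $1$) together with Chv\'{a}tal's theorem (Lemma~\ref{le:R2}~(ii)), $R(K_{\chi(G)},T_{t+1})=t(\chi(G)-1)+1$: a color-$1$ $K_{\chi(G)}$ in the auxiliary graph yields a color-$1$ complete $\chi(G)$-partite graph with parts of size $|V(G)|$, hence a color-$1$ copy of $G$ inside $F'$ (contradiction), while a color-$2$ tree on $t+1$ of the copies links them into a connected color-$2$ subgraph containing $(t+1)G\supseteq G\cup G_1\cup\cdots\cup G_t$ (contradiction). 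Note also that your counting only budgets for $t+1$ copies, whereas the argument genuinely needs $t(\chi(G)-1)+1$ of them to run this Ramsey step on the auxiliary graph.
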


\begin{proof} For a contradiction, suppose that $F$ is a $2$-edge-colored $K_n$ without monochromatic copies of any graph in $\mathscr{C}(G\cup G_1\cup \cdots \cup G_t)$, where $n=(\chi(G)-1)(R_2(G)-1)+\sigma(G)$.
An old observation of Erd\H{o}s and Rado states that every $2$-edge-colored complete graph contains a monochromatic spanning tree.
Hence, without loss of generality, we may assume that $F$ contains a monochromatic spanning tree of color 1.
Then $F$ contains no monochromatic $G\cup G_1\cup \cdots \cup G_t$ (and thus no monochromatic $(t+1)G$) of color 1.
We choose a set $\{A_1, \ldots, A_k\}$ of disjoint monochromatic copies of $G$ of color 1 in $F$ such that $k$ is maximal.
Then $k\leq t$.
Let $F'=F-(A_1\cup \cdots \cup A_k)$.
Then $F'$ contains no monochromatic $G$ of color 1.

Since $t\leq \frac{(\chi(G)-2)(R_2(G)-1)+\sigma(G)-1}{\chi(G)|V(G)|}$, we have
\begin{align*}
  |V(F')|= &~n-k|V(G)| ~\geq n-t|V(G)| \\
  = &~(\chi(G)-1)(R_2(G)-1)+\sigma(G)-t|V(G)| \\
  = &~R_2(G)+(\chi(G)-2)(R_2(G)-1)+\sigma(G)-1-t|V(G)| \\
  \geq &~R_2(G)+t\chi(G)|V(G)|-t|V(G)| \\
  = &~R_2(G)+t(\chi(G)-1)|V(G)|.
\end{align*}
Then $F'$ contains $t(\chi(G)-1)+1$ disjoint monochromatic copies $B_1, \ldots, B_{t(\chi(G)-1)+1}$ of $G$.
Since $F'$ contains no monochromatic $G$ of color 1, all of $B_1, \ldots, B_{t(\chi(G)-1)+1}$ are of color 2.

We define an auxiliary $2$-edge-colored complete graph $\Gamma$ as follows.
Let $V(\Gamma)=\{v_1, \ldots,$ $v_{t(\chi(G)-1)+1}\}$.
For $1\leq i< j \leq t(\chi(G)-1)+1$, the edge $v_iv_j$ is of color 1 if all edges between $V(B_i)$ and $V(B_j)$ are of color 1 in $F'$, and of color 2 if there exists an edge of color 2 between $V(B_i)$ and $V(B_j)$ in $F'$.
%It was shown by Chv\'{a}tal~\cite{Chv} that $R(K_m, T_{t+1})=t(m-1)+1$ for any tree $T_{t+1}$ on $t+1$ vertices.
%Thus $\Gamma$ contains either a monochromatic $K_{\chi(G)}$ of color 1 or a monochromatic $T_{t+1}$ of color 2.
By Lemma~\ref{le:R2}~(ii), $\Gamma$ contains either a monochromatic $K_{\chi(G)}$ of color 1 or a monochromatic tree $T_{t+1}$ on $t+1$ vertices of color 2.
If $\Gamma$ contains a monochromatic $K_{\chi(G)}$ of color 1, then $F'$ contains a monochromatic $G$ of color 1, a contradiction.
If $\Gamma$ contains a monochromatic $T_{t+1}$ of color 2, then $F'$ contains a monochromatic connected subgraph of color 2 that contains $(t+1)G$ (and thus $G\cup G_1\cup \cdots \cup G_t$) as a subgraph, a contradiction.
This completes the proof of Lemma~\ref{le:union-2}.
\end{proof}

Now we have all ingredients to present our proof of Theorem~\ref{th:union-2}.

\vspace{0.3cm}
\noindent{\bf Proof of Theorem~\ref{th:union-2}.}~
%\begin{proof}[Proof of Theorem~\ref{th:union-2}]
By Lemma~\ref{le:union-2}, we have $R_2(\mathscr{C}(G\cup G_1\cup \cdots \cup G_t))\leq (\chi(G)-1)(R_2(G)-1)+\sigma(G).$
By Lemma~\ref{le:R3}~(i), we have $R_3(G\cup G_1\cup \cdots \cup G_t)\geq (\chi(G)-1)(R_2(G)-1)+\sigma(G).$
Thus $R_3(G\cup G_1\cup \cdots \cup G_t)\geq R_2(\mathscr{C}(G\cup G_1\cup \cdots \cup G_t)).$
By Lemma~\ref{le:CH}, we have $f(G\cup G_1\cup \cdots \cup G_t, P_5)=R_3(G\cup G_1\cup \cdots \cup G_t)$.
This completes the proof of Theorem~\ref{th:union-2}.
%\end{proof}
\hfill$\square$
\vspace{0.2cm}

\vspace{0.3cm}
\noindent{\bf Proof of Corollary~\ref{cor:union-1}.}~
%\begin{proof}[Proof of Corollary~\ref{cor:union-1}]
By Lemma~\ref{le:R2}~(i), we have $R_2(G)\geq (\chi(G)-1)(|V(G)|-1)+\sigma(G)$.
Thus
\begin{align}\label{eq:Cor-union-1}
  &~(\chi(G)-2)(R_2(G)-1)+\sigma(G)-1-t\chi(G)|V(G)| \nonumber\\
  \geq &~(\chi(G)-2)((\chi(G)-1)(|V(G)|-1)+\sigma(G)-1)+\sigma(G)-1-t\chi(G)|V(G)| \nonumber\\
  = &~((\chi(G)-2)(\chi(G)-1)-t\chi(G))|V(G)|-(\chi(G)-2)(\chi(G)-1)+(\chi(G)-1)(\sigma(G)-1) \nonumber\\
  = &~(\chi(G)^2-(t+3)\chi(G)+2)|V(G)|+(\chi(G)-1)(\sigma(G)-\chi(G)+1) \nonumber\\
  = &~(\chi(G)^2-(t+4)\chi(G)+3)|V(G)|+(\chi(G)-1)(|V(G)|+\sigma(G)-\chi(G)+1) \nonumber\\
  \geq &~(\chi(G)^2-(t+4)\chi(G)+3)|V(G)|.
\end{align}
Since $\chi(G)\geq \frac{t+4+\sqrt{(t+4)^2-12}}{2}$, we have $\chi(G)^2-(t+4)\chi(G)+3\geq 0$.
Combining with Inequality~(\ref{eq:Cor-union-1}), we have $t\leq \frac{(\chi(G)-2)(R_2(G)-1)+\sigma(G)-1}{\chi(G)|V(G)|}$.
By Theorem~\ref{th:union-2}, the result follows.
%\end{proof}
\hfill$\square$
\vspace{0.2cm}

Next, we prove Theorem~\ref{th:union-1}.
We first prove the following result on $R_2(\mathscr{C}(G_1\cup G_2))$.

\begin{lemma}\label{le:union-1}
Let $G_1$, $G_2$ be two connected graphs with $G_1\subseteq G_2$ and $\chi(G_2)\geq 3$. Then the following statements hold.
\begin{itemize}
\item[{\rm (i)}] $R_2(\mathscr{C}(G_1\cup G_2))\leq R_2(G_2)+\chi(G_2)|V(G_2)|$.
\item[{\rm (ii)}] If $\chi(G_1)<\chi(G_2)$, then $R_2(\mathscr{C}(G_1\cup G_2))\leq R_2(G_2)+(\chi(G_2)-1)|V(G_2)|$.
\end{itemize}
\end{lemma}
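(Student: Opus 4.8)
The strategy is to prove (i) and (ii) by the same method used in Lemma~\ref{le:union-2}: take a $2$-edge-colored $K_n$ with $n$ equal to the claimed bound, use the Erd\H{o}s--Rado observation that one color class (say color 1) contains a spanning tree, and then argue that color 2 must contain a connected graph housing both $G_1$ and $G_2$. First I would set $n=R_2(G_2)+\chi(G_2)|V(G_2)|$ for part (i) and suppose toward a contradiction that $F$ is a $2$-edge-colored $K_n$ with no monochromatic member of $\mathscr{C}(G_1\cup G_2)$. Since color 1 carries a spanning tree, color 1 contains no $G_1\cup G_2$ (indeed no connected supergraph of it), which forces the color-1 copies of $G_2$ to be scarce. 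I would then peel off a maximal family of disjoint monochromatic color-1 copies of $G_2$; by the assumed nonexistence of a color-1 $G_1\cup G_2$ this family has at most one member, so after deleting its at most $|V(G_2)|$ vertices the remainder $F'$ has no color-1 $G_2$ and satisfies $|V(F')|\ge R_2(G_2)+(\chi(G_2)-1)|V(G_2)|$.

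The core of the argument is the same auxiliary-graph reduction as in Lemma~\ref{le:union-2}. In $F'$ I would extract $\chi(G_2)$ disjoint monochromatic color-2 copies $B_1,\ldots,B_{\chi(G_2)}$ of $G_2$ (possible since $|V(F')|\ge R_2(G_2)+(\chi(G_2)-1)|V(G_2)|$ and no color-1 $G_2$ survives), build the auxiliary $2$-colored complete graph $\Gamma$ on these $\chi(G_2)$ vertices exactly as before (color-1 edge iff all cross edges are color 1, else color 2), and split on its structure. A color-1 $K_{\chi(G_2)}$ in $\Gamma$ would, via the blow-up, produce a color-1 copy of the complete $\chi(G_2)$-partite graph with large parts, hence a color-1 $G_2$ in $F'$ --- contradiction. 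Otherwise $\Gamma$ has a color-2 edge, say $v_iv_j$; then $B_i$ and $B_j$ are joined by a color-2 edge in $F'$, so $B_i\cup B_j$ together with that edge is a connected color-2 subgraph containing $G_2\cup G_1$ (using $G_1\subseteq G_2$), again a contradiction. Either branch closes part (i).

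For part (ii) the extra hypothesis $\chi(G_1)<\chi(G_2)$ should let me shave one factor of $|V(G_2)|$ off the bound, taking $n=R_2(G_2)+(\chi(G_2)-1)|V(G_2)|$. The idea is that I no longer need to guarantee a full color-1 $G_2$ from $\Gamma$ --- it suffices to find a color-1 $K_{\chi(G_1)}$ in the auxiliary picture to embed the smaller graph $G_1$, while the color-2 branch still yields $G_2$ and then $G_1$ inside it. Concretely I expect to run a Ramsey-number argument on $\Gamma$ with target clique size $\chi(G_1)$ in color 1 against a connected structure in color 2, exploiting $\chi(G_1)\le\chi(G_2)-1$ so that a color-1 $K_{\chi(G_1)}$ forces a color-1 $G_1$ and lets me recover the disjoint $G_2$ separately. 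The main obstacle I anticipate is bookkeeping in part (ii): making the counts line up so that after removing the color-1 copies one still has enough vertices to run the auxiliary argument with the reduced clique threshold, and verifying that the blow-up of a color-1 clique of size $\chi(G_1)$ really embeds $G_1$ (which needs the proper $\chi(G_1)$-coloring of $G_1$ to fit into parts of size $\ge|V(G_2)|\ge|V(G_1)|$). Once those size inequalities are checked, both parts follow by the contradiction.
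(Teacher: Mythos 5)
Your part (i) is correct, but it is not the paper's argument for this lemma; it is the paper's proof of Lemma~\ref{le:union-2} transplanted here. The paper's own proof of Lemma~\ref{le:union-1}~(i) never invokes the Erd\H{o}s--Rado spanning-tree observation: it extracts $\chi(G_2)+1$ disjoint monochromatic copies of $G_2$ of \emph{arbitrary} colors, proves a separate claim (a four-copy case analysis) that at least $\chi(G_2)$ of them share one color, deduces that all edges between those copies carry the other color, and finally embeds \emph{two} disjoint copies of $G_2$ into that cross-edge blow-up via two shifted transversal partitions. Your route---spanning tree in color 1, hence at most one disjoint color-1 copy of $G_2$, peel it, extract $\chi(G_2)$ color-2 copies, then the dichotomy ``all cross edges color 1 (so the blow-up contains a color-1 $G_2$, contradicting the maximality of the peeled family) versus some color-2 cross edge (giving a connected color-2 graph containing $2G_2\supseteq G_1\cup G_2$)''---is valid and in fact shorter: it avoids both the same-color-majority claim and the double shifted embedding, at no cost in the bound.

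Part (ii), however, has a genuine gap. Your color-1 branch closes by pairing the color-1 copy of $G_1$ (obtained from an all-color-1 auxiliary clique on $\chi(G_2)-1\geq\chi(G_1)$ parts) with ``the disjoint $G_2$ recovered separately,'' which can only mean the peeled color-1 copy of $G_2$. But nothing guarantees such a copy exists: if $F$ contains no color-1 copy of $G_2$ whatsoever, the peeled family is empty, there is nothing to recover, and a color-1 copy of $G_1$ alone contradicts nothing (color 1 is only forbidden from containing $G_1\cup G_2$). As written, your plan fails in exactly this sub-case. The repair is immediate and stays inside your own framework: when the peeled family is empty you still have all $n=R_2(G_2)+(\chi(G_2)-1)|V(G_2)|$ vertices available, so you can extract $\chi(G_2)$ (not $\chi(G_2)-1$) disjoint color-2 copies and rerun your part (i) dichotomy, where an all-color-1 auxiliary clique now has $\chi(G_2)$ parts and forces a color-1 $G_2$, contradicting its assumed absence. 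So the case split ``peeled family of size one versus size zero'' must be made explicit, with the two sub-cases using auxiliary cliques of different sizes. (The paper's proof of (ii) avoids this issue by a different mechanism: it shows that \emph{all} $\chi(G_2)$ extracted copies share a color---its claim uses $\chi(G_1)<\chi(G_2)$ to embed a color-2 copy of $G_1$ in the cross-edge blow-up---and then reuses the shifted double embedding.)
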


\begin{proof}
(i) Let $n=R_2(G_2)+\chi(G_2)|V(G_2)|$.
For a contradiction, suppose that $F$ is a $2$-edge-colored $K_n$ containing no monochromatic copy of any graph in $\mathscr{C}(G_1\cup G_2)$.
Since $G_1\subseteq G_2$, $F$ contains no monochromatic copy of any graph in $\mathscr{C}(2G_2)$ either.
Since $n=R_2(G_2)+\chi(G_2)|V(G_2)|$, $F$ contains $\chi(G_2)+1$ disjoint monochromatic copies $A_1, \ldots, A_{\chi(G_2)+1}$ of $G_2$.

\begin{claim}\label{cl:union-1-1}
Among the graphs $A_1, \ldots, A_{\chi(G_2)+1}$, at least $\chi(G_2)$ of them are of the same color.
\end{claim}

\begin{proof}
For a contradiction, suppose that for each color $i\in [2]$, at most $\chi(G_2)-1$ of $A_1, \ldots, A_{\chi(G_2)+1}$ are of color $i$.
Since $\chi(G_2)+1\geq 4$, there exist four distinct indices $a,b,c,d$ such that $A_a$ and $A_b$ are of color 1, and $A_c$ and $A_d$ are of color 2.
In order to avoid a monochromatic graph in $\mathscr{C}(2G_2)$, we have $c(V(A_a), V(A_b))=2$ and $c(V(A_c), V(A_d))=1$.
Let $u\in V(A_a)$ and $v\in V(A_c)$.
Without loss of generality, we may assume that $c(uv)=1$.
For avoiding a connected monochromatic graph of color 1 containing $2G_2$ as a subgraph, we can further deduce that $c(v, V(A_b))=2$ and $c(V(A_d), V(A_b))=2$.
But then in $F[V(A_b)\cup V(A_c)\cup V(A_d)]$, there is a connected monochromatic graph of color 2 containing $2G_2$ as a subgraph, a contradiction.
\end{proof}

By Claim~\ref{cl:union-1-1}, we may assume that $A_1, \ldots, A_{\chi(G_2)}$ are of color 1 without loss of generality.
For avoiding a connected monochromatic graph of color 1 containing $2G_2$, we have $c(V(A_i), V(A_j))=2$ for every $1\leq i<j\leq \chi(G_2)$.
Let $s_1, \ldots, s_{\chi(G_2)}$ be the sizes of color classes in a proper $\chi(G_2)$-vertex-coloring of $G_2$.
For each $i\in [\chi(G_2)]$, we can partition $V(A_i)$ into subsets $V_{i,1}, \ldots, V_{i,\chi(G_2)}$ with $|V_{i,j}|=s_j$ for each $j\in [\chi(G_2)]$.
Then each of $F\left[V_{1,1}\cup V_{2,2}\cup \cdots \cup V_{\chi(G_2), \chi(G_2)}\right]$ and $F\left[V_{1,2}\cup V_{2,3}\cup \cdots \cup V_{\chi(G_2)-1, \chi(G_2)}\cup V_{\chi(G_2),1}\right]$ contains a monochromatic $G_2$ of color 2.
This implies that $F$ contains a connected monochromatic graph of color 2 containing $2G_2$ as a subgraph, a contradiction.

(ii) The proof of (ii) is analogous to that of (i), so we only present the different parts.
Suppose for a contradiction that $F$ is a $2$-edge-colored $K_n$ containing no monochromatic copy of any graph in $\mathscr{C}(G_1\cup G_2)$, where $n=R_2(G_2)+(\chi(G_2)-1)|V(G_2)|$.
Then $F$ contains $\chi(G_2)$ disjoint monochromatic copies $A_1, \ldots, A_{\chi(G_2)}$ of $G_2$.
If $\chi(G_2)=3$, then at least two of $A_1, A_2, A_3$ are of the same color by the pigeonhole principle;
if $\chi(G_2)\geq 4$, the by an analogous argument as the proof of Claim~\ref{cl:union-1-1}, we can also derive that at least $\chi(G_2)-1$ of $A_1, \ldots, A_{\chi(G_2)}$ are of the same color.
Next, we further show that all the $\chi(G_2)$ graphs are of the same color.

\begin{claim}\label{cl:union-1-2}
All of $A_1, \ldots, A_{\chi(G_2)}$ are of the same color.
\end{claim}

\begin{proof}
From the about argument, we know that at least $\chi(G_2)-1$ of $A_1, \ldots, A_{\chi(G_2)}$ are of the same color.
Suppose the claim does not hold.
Then without loss of generality, we may assume that $A_1, \ldots, A_{\chi(G_2)-1}$ are of color 1 and $A_{\chi(G_2)}$ is of color 2.
For avoiding a connected monochromatic graph of color 1 containing $2G_2$ as a subgraph, we have $c(V(A_i), V(A_j))=2$ for every $1\leq i<j\leq \chi(G_2)-1$.
%For the same reason, there exist at least $\chi(G_2)-2$ graphs, say $A_1, \ldots, A_{\chi(G_2)-2}$, such that there exists an edge of color 2 between $V(A_{\chi(G_2)})$ and each of them.
For the same reason, there exists an edge of color 2 between $V(A_{\chi(G_2)})$ and $V(A_1)\cup \cdots \cup V(A_{\chi(G_2)-1})$, say $c(uv)=2$ with $u\in V(A_{\chi(G_2)})$ and $v\in V(A_1)$.
Moreover, since $G_1\subseteq G_2$ and $\chi(G_1)<\chi(G_2)$, there is a monochromatic $G_1$ of color 2 containing $v$ in $F\left[V(A_1)\cup \cdots \cup V(A_{\chi(G_2)-1})\right]$.
Then $F$ contains a connected monochromatic graph of color 2 containing $G_1\cup G_2$ as a subgraph, a contradiction.
\end{proof}

By Claim~\ref{cl:union-1-2}, all of $A_1, \ldots, A_{\chi(G_2)}$ are of the same color.
Then, by an argument analogous to that in the last paragraph of the proof of (i), we can also derive a contradiction.
This completes the proof of (ii).
\end{proof}

Now we have all ingredients to present our proof of Theorem~\ref{th:union-1}.

\vspace{0.3cm}
\noindent{\bf Proof of Theorem~\ref{th:union-1}.}~
%\begin{proof}[Proof of Theorem~\ref{th:union-1}]
Note that $\frac{1+4+\sqrt{(1+4)^2-12}}{2}<5$.
Thus the case $\chi(G_2)\geq 5$ follows from Corollary~\ref{cor:union-1}.
If $\chi(G_2)\leq 2$, then $G_1\cup G_2$ is a bipartite graph, so the result follows from Theorem~\ref{th:LBMWW-1} or \ref{th:chro}.
Next, we consider the case $\chi(G_2)=4$.
By Lemma~\ref{le:R2}~(i), \ref{le:R3}~(i), \ref{le:union-1}~(i) and since $|V(G_2)|\geq \chi(G_2)=4$ and $\sigma(G_2)\geq 1$, we have
\begin{align*}
  &~R_3(G_1\cup G_2)-R_2(\mathscr{C}(G_1\cup G_2)) \\
  \geq &~(\chi(G_2)-1)(R_2(G_2)-1)+\sigma(G_2)-(R_2(G_2)+\chi(G_2)|V(G_2)|) \\
  = &~(\chi(G_2)-2)R_2(G_2)-(\chi(G_2)-1)+\sigma(G_2)-\chi(G_2)|V(G_2)| \\
  \geq &~(\chi(G_2)-2)((\chi(G_2)-1)(|V(G_2)|-1)+\sigma(G_2))-(\chi(G_2)-1)+\sigma(G_2)-\chi(G_2)|V(G_2)| \\
  \geq &~2(3(|V(G_2)|-1)+1)-3+1-4|V(G_2)| \\
  = &~2|V(G_2)|-6 ~> 0.
\end{align*}
Thus $R_3(G_1\cup G_2)> R_2(\mathscr{C}(G_1\cup G_2))$.
By Lemma~\ref{le:CH}, the result follows.

Now we consider the case $\chi(G_2)=3$.
If $\chi(G_1)\leq 2$ and $\sigma(G_2)=1$, then the result follows from Corollary~\ref{cor:sigma1}.
If $\chi(G_1)\leq 2$ and $\sigma(G_2)\geq 2$, then by Lemma~\ref{le:R2}~(i), \ref{le:R3}~(i) and \ref{le:union-1}~(ii), we have
\begin{align*}
  &~R_3(G_1\cup G_2)-R_2(\mathscr{C}(G_1\cup G_2)) \\
  \geq &~(\chi(G_2)-1)(R_2(G_2)-1)+\sigma(G_2)-(R_2(G_2)+(\chi(G_2)-1)|V(G_2)|) \\
  = &~2(R_2(G_2)-1)+\sigma(G_2)-(R_2(G_2)+2|V(G_2)|) \\
  = &~R_2(G_2)-2+\sigma(G_2)-2|V(G_2)| \\
  \geq &~(2(|V(G_2)|-1)+\sigma(G_2))-2+\sigma(G_2)-2|V(G_2)| \\
  = &~2\sigma(G_2)-4 ~\geq 0.
\end{align*}
Thus $R_3(G_1\cup G_2)\geq R_2(\mathscr{C}(G_1\cup G_2))$.
By Lemma~\ref{le:CH}, the result follows.

{\bf The remainder of the proof is devoted to the case $\chi(G_1)=\chi(G_2)=3$.}
Let $s_1, s_2, s_3$ (resp., $t_1, t_2, t_3$) be the sizes of color classes in a proper 3-vertex-coloring of $G_1$ (resp., $G_2$) with $s_1\geq s_2\geq s_3=\sigma(G_1)$ (resp., $t_1\geq t_2\geq t_3=\sigma(G_2)$).
By Inequality~(\ref{eq:lower bound}), it suffices to show that $f(G_1\cup G_2,P_5)\leq R_{3}(G_1\cup G_2)$.
For a contradiction, suppose that $F$ is an edge-colored $K_n$ without rainbow $P_5$ or monochromatic $G_1\cup G_2$, where $n=R_{3}(G_1\cup G_2)$.
By Lemma~\ref{le:Caseb+}, we can partition $V(F)$ into $k-1$ parts $V_2, V_3, \ldots, V_{k}$ for some $k\geq 4$ satisfying
\begin{itemize}
\item[{\rm (i)}] $\{i\}\subseteq C(V_i)\subseteq \{1,i\}$ for every $i\in \{2, 3, \ldots, k\}$,
\item[{\rm (ii)}] $c(V_i, V_j)=1$ for every $2\leq i<j\leq k$,
\end{itemize}
and in addition, we have
$|V_2|\geq |V_3|\geq \cdots \geq |V_{k}|,$
\begin{equation}\label{eq:union-1-1}
|V_2|\geq |V_3|\geq |V(G_2)|
\end{equation}
and
\begin{equation}\label{eq:union-1-2}
|V_4\cup \cdots \cup V_k|\geq |V_4|\geq |V(G_1)|.
\end{equation}

\begin{claim}\label{cl:union-1+1}
$|V_4\cup \cdots \cup V_k|\leq s_3+t_3-1.$
\end{claim}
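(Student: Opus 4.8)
The plan is to show that if $|V_4\cup \cdots \cup V_k|\geq s_3+t_3$, then $F$ contains a monochromatic $G_1\cup G_2$ of color $1$, contradicting our standing assumption. The structure from Lemma~\ref{le:Caseb+} is ideal for this: all edges \emph{between} distinct parts $V_i, V_j$ (for $2\leq i<j\leq k$) have color $1$, so the color-$1$ graph contains a complete multipartite graph with parts $V_2, V_3, \ldots, V_k$. The idea is to exhibit $G_1$ and $G_2$ as vertex-disjoint subgraphs living inside this color-$1$ complete multipartite structure, using Observation~\ref{obs:embedding} as the embedding engine.

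\textbf{Key steps.} First I would record what is already available: by (\ref{eq:union-1-1}) the two largest parts $V_2$ and $V_3$ each have at least $|V(G_2)|$ vertices, and by (\ref{eq:union-1-2}) we have $|V_4|\geq |V(G_1)|$. The plan is to embed $G_2$ (with color-class sizes $t_1\geq t_2\geq t_3$) using vertices drawn from $V_2, V_3$ and the smaller parts, and to embed $G_1$ (with color-class sizes $s_1\geq s_2\geq s_3$) disjointly. Since $G_1$ and $G_2$ are $3$-chromatic, the natural move is to view $V_2, V_3$ and $V_4\cup\cdots\cup V_k$ as the three "color classes" of a complete tripartite color-$1$ graph $K_{x,y,z}$ with $x=|V_2|$, $y=|V_3|$ and $z=|V_4\cup\cdots\cup V_k|$; all edges among these three blocks are color $1$. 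I would then invoke Observation~\ref{obs:embedding}(i): to find a color-$1$ copy of $G_1\cup G_2$ it suffices that $x\geq \max\{s_1+t_2,\,t_1+s_2\}$, $y\geq \min\{s_1+t_2,\,t_1+s_2\}$, and $z\geq s_3+t_3$. The first two inequalities should follow from (\ref{eq:union-1-1}), since $s_1+t_2\leq |V(G_1)|+|V(G_2)|$ is too crude, so instead I would use that $x,y\geq |V(G_2)|\geq t_1+t_2\geq \max\{s_1+t_2, t_1+s_2\}$ (as $G_1\subseteq G_2$ forces $s_i\leq t_i$ componentwise after appropriate ordering, hence $s_1\leq t_1$ and $s_2\leq t_2$, giving $\max\{s_1+t_2,t_1+s_2\}\leq t_1+t_2\leq |V(G_2)|$). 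The third inequality $z\geq s_3+t_3$ is exactly the hypothesis $|V_4\cup\cdots\cup V_k|\geq s_3+t_3$ that we are assuming toward contradiction.

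\textbf{The main obstacle} I expect is justifying the containment $s_i\leq t_i$ that lets $\max\{s_1+t_2,t_1+s_2\}\leq t_1+t_2\leq|V(G_2)|$ go through cleanly. Although $G_1\subseteq G_2$, a proper $3$-coloring of $G_1$ need not extend to or align with one of $G_2$, so the ordered color-class sizes may not satisfy $s_i\leq t_i$ term by term; one only has $s_1+s_2+s_3=|V(G_1)|\leq|V(G_2)|=t_1+t_2+t_3$. The careful point is therefore to argue that the relevant sums are bounded by $|V(G_2)|$, which holds because $s_1+t_2\leq (s_1+s_2+s_3)+(t_1+t_2+t_3)$ is again too weak—so the honest route is to bound $z$ from below only by $s_3+t_3$ and instead place the two larger classes of \emph{each} of $G_1,G_2$ into the two large parts $V_2,V_3$. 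Concretely, since $|V_2|,|V_3|\geq|V(G_2)|\geq t_1+t_2+t_3\geq t_1+t_2$ and also $\geq s_1+s_2$, each of $V_2,V_3$ can absorb one of $\{t_1,t_2\}$ together with one of $\{s_1,s_2\}$; the only truly scarce resource is the third block, which must hold $s_3+t_3$ vertices. Once $|V_4\cup\cdots\cup V_k|\geq s_3+t_3$ is granted, Observation~\ref{obs:embedding}(i) produces the color-$1$ copy of $G_1\cup G_2$, contradicting the choice of $F$ and establishing $|V_4\cup\cdots\cup V_k|\leq s_3+t_3-1$.
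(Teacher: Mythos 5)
Your overall strategy---assume $|V_4\cup\cdots\cup V_k|\geq s_3+t_3$ for contradiction, view $V_2$, $V_3$ and $V_4\cup\cdots\cup V_k$ as the blocks of a color-1 complete tripartite graph, and finish with Observation~\ref{obs:embedding}---is the same as the paper's, but your verification of the hypotheses of that observation has a genuine gap, one you half-noticed and then papered over. You correctly point out that $G_1\subseteq G_2$ does not give $s_i\leq t_i$ termwise, but your substitute claim is also false: from $|V_2|\geq|V_3|\geq|V(G_2)|$ (Inequality~(\ref{eq:union-1-1})) one does get $|V_3|\geq\max\{t_1+t_2,\,s_1+s_2\}\geq\min\{s_1+t_2,\,t_1+s_2\}$, but \emph{not} that $V_2$ can ``absorb one of $\{t_1,t_2\}$ together with one of $\{s_1,s_2\}$'', because $\max\{s_1+t_2,\,t_1+s_2\}$ (and likewise $s_1+t_1$, needed for Observation~\ref{obs:embedding}(ii)) can strictly exceed $|V(G_2)|$. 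Concretely, let $G_1=G_2=G$, where $G$ consists of an independent set $A_x\cup A_y$ with $|A_x|=|A_y|=4$, two vertices $x,y$, and edges $xy$, $xb_1$ for one fixed $b_1\in A_y$, together with all edges from $x$ to $A_x$ and from $y$ to $A_y$. Then $G$ is connected, $3$-chromatic, $\sigma(G)=1$, and both $(s_1,s_2,s_3)=(8,1,1)$ (classes $A_x\cup A_y$, $\{x\}$, $\{y\}$) and $(t_1,t_2,t_3)=(5,4,1)$ (classes $\{y\}\cup A_x$, $\{x\}\cup(A_y\setminus\{b_1\})$, $\{b_1\}$) are legitimate choices of color-class sizes whose smallest class equals the surplus, exactly as fixed at the start of the proof of Theorem~\ref{th:union-1}. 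Here $s_1+t_2=12$ and $s_1+t_1=13$, while $|V(G_2)|=10$, so the bounds $|V_2|,|V_3|\geq|V(G_2)|$ cannot verify the hypotheses of either part of Observation~\ref{obs:embedding}; the ``only truly scarce resource'' is not the third block but a block of size at least $s_1+t_1$, which your argument never produces.

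The missing ingredient is a lower bound on the \emph{total} number of vertices, which is exactly what the paper uses. Since both graphs are $3$-chromatic in this case, Lemma~\ref{le:R3}(iii) gives $|V_2|+|V_3|+|V_4\cup\cdots\cup V_k|=R_3(G_1\cup G_2)\geq 3(|V(G_1)|+|V(G_2)|)-2$. Setting $x=\max\{|V_2|,\,|V_4\cup\cdots\cup V_k|\}$, $y=|V_3|$ and $z=\min\{|V_2|,\,|V_4\cup\cdots\cup V_k|\}$ (note the max/min: the largest block may be $V_4\cup\cdots\cup V_k$ rather than $V_2$, a flexibility your fixed assignment forgoes), the largest block satisfies $x\geq\lceil R_3(G_1\cup G_2)/3\rceil\geq|V(G_1)|+|V(G_2)|\geq s_1+t_1$, while $y\geq|V(G_2)|\geq\frac{1}{2}|V(G_1)|+\frac{1}{2}|V(G_2)|\geq s_2+t_2$ and $z\geq\min\{|V(G_2)|,\,s_3+t_3\}=s_3+t_3$; then Observation~\ref{obs:embedding}(ii) yields the color-1 copy of $G_1\cup G_2$ and the contradiction. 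Your proof can be repaired by importing exactly this pigeonhole-plus-Ramsey-lower-bound step, but as written it does not go through.
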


\begin{proof}
Suppose $|V_4\cup \cdots \cup V_k|\geq s_3+t_3.$
Let $x=\max\{|V_2|, |V_4\cup \cdots \cup V_k|\}$, $y=|V_3|$ and $z=\min\{|V_2|, |V_4\cup \cdots \cup V_k|\}$.
By Inequality~(\ref{eq:union-1-1}) and since $G_1\subseteq G_2$, we have $y\geq |V(G_2)|\geq \frac{1}{2}|V(G_1)|+\frac{1}{2}|V(G_2)|\geq s_2+t_2$ and $z\geq \min\{|V(G_2)|, s_3+t_3\}\geq s_3+t_3.$
Note that $x+y+z=n=R_{3}(G_1\cup G_2)$, $x\geq y$, $x\geq z$ and $x$ is an integer.
Combining with Lemma~\ref{le:R3}~(iii), we have
%$x\geq \left\lceil\frac{1}{3}R_{3}(G_1\cup G_2)\right\rceil\geq \left\lceil\frac{1}{3}(3(|V(G_1)|+|V(G_2)|)-2)\right\rceil= |V(G_1)|+|V(G_2)|\geq s_1+t_1,$
\begin{align*}
  x\geq &~\left\lceil\frac{1}{3}R_{3}(G_1\cup G_2)\right\rceil ~\geq \left\lceil\frac{1}{3}\left(3(|V(G_1)|+|V(G_2)|)-2\right)\right\rceil\\
   = &~|V(G_1)|+|V(G_2)| ~\geq s_1+t_1.
\end{align*}
By Observation~\ref{obs:embedding}~(ii), there is a monochromatic $G_1\cup G_2$ of color 1 using edges between $V_2$, $V_3$ and $V_4\cup \cdots \cup V_k$, a contradiction.
\end{proof}

By Lemma~\ref{le:R3}~(i), Claim~\ref{cl:union-1+1} and Inequalities~(\ref{eq:union-1-1}) and (\ref{eq:union-1-2}), we have
\begin{align}\label{eq:union-1-3}
|V_2|\geq &~ \left\lceil\frac{1}{2}(n-|V_4\cup \cdots \cup V_k|)\right\rceil ~\geq \left\lceil\frac{1}{2}(R_{3}(G_1\cup G_2)-(s_3+t_3-1))\right\rceil \nonumber\\
\geq &~\left\lceil\frac{1}{2}(2(R_2(G_2)-1)+\sigma(G_1)+\sigma(G_2)-(s_3+t_3-1))\right\rceil \nonumber\\
= &~\left\lceil\frac{1}{2}(2R_2(G_2)-1)\right\rceil ~=R_2(G_2)
\end{align}
and
\begin{align}\label{eq:union-1-4}
|V(G_1)|= &~s_1+s_2+s_3\leq \frac{3}{2}(s_1+s_2) ~= \frac{3}{2}(|V(G_1)|-s_3) ~\leq \frac{3}{2}(|V_4\cup \cdots \cup V_k|-s_3) \nonumber\\
\leq &~\frac{3}{2}(s_3+t_3-1-s_3) ~= \frac{3}{2}(t_3-1)~\leq \frac{3}{2}\left(\frac{1}{3}|V(G_2)|-1\right) ~= \frac{1}{2}(|V(G_2)|-3). \nonumber
\end{align}
In particular, we have $|V(G_1)|\leq |V(G_2)|-1$.
Combining with Lemma~\ref{le:R2}~(i) and Inequality~(\ref{eq:union-1-3}), we have
\begin{equation}\label{eq:union-1-5}
|V_2|\geq R_2(G_2)\geq 2(|V(G_2)|-1)+1\geq |V(G_1)|+|V(G_2)|.
\end{equation}
Recall that $\mathcal{M}(H)$ is the decomposition family of a graph $H$.

\begin{claim}\label{cl:union-1+2}
$R_2(G_2)\geq R(G_1\cup G_2, \mathcal{M}(G_2))\geq R(G_1\cup G_2, \mathcal{M}(G_1)).$
\end{claim}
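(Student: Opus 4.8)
The plan is to prove the two inequalities in the claim separately, relying throughout on $G_1\subseteq G_2$ and $\chi(G_1)=\chi(G_2)=3$. For the right-hand inequality $R(G_1\cup G_2, \mathcal{M}(G_2))\geq R(G_1\cup G_2, \mathcal{M}(G_1))$, the crux is a domination relation between the two decomposition families: every member of $\mathcal{M}(G_2)$ contains some member of $\mathcal{M}(G_1)$ as a subgraph. For the left-hand inequality $R_2(G_2)\geq R(G_1\cup G_2, \mathcal{M}(G_2))$, I would invoke Lemma~\ref{le:R-decom}~(ii) with $G=G_1$ and $H=G_2$, and then bound both resulting terms by $R_2(G_2)$.

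First I would establish the domination relation. Fix any $M_2\in \mathcal{M}(G_2)$; by definition there is an integer $t$ with $G_2\subseteq (M_2\cup \overline{K}_{t-|V(M_2)|})\vee K_{(\chi(G_2)-2)\times t}$. Since $G_1\subseteq G_2$ and $\chi(G_1)=\chi(G_2)$, the same $t$ gives $G_1\subseteq (M_2\cup \overline{K}_{t-|V(M_2)|})\vee K_{(\chi(G_1)-2)\times t}$, so $M_2$ satisfies the defining condition of the decomposition family of $G_1$. This defining condition is monotone under subgraph inclusion (a supergraph of a witness, obtained by adding edges or vertices, remains a witness), so any witness contains a minimal one; hence there exists $M_1\in \mathcal{M}(G_1)$ with $M_1\subseteq M_2$. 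Consequently, in any $2$-edge-coloring, a color-$2$ copy of a member of $\mathcal{M}(G_2)$ contains a color-$2$ copy of a member of $\mathcal{M}(G_1)$, which forces $R(G_1\cup G_2, \mathcal{M}(G_1))\leq R(G_1\cup G_2, \mathcal{M}(G_2))$.

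For the left-hand inequality, Lemma~\ref{le:R-decom}~(ii) yields
\[
R(G_1\cup G_2, \mathcal{M}(G_2))\leq \max\bigl\{R(G_1, G_2),\, R(G_2, \mathcal{M}(G_2))+|V(G_1)|\bigr\}.
\]
I would bound the first term by monotonicity of Ramsey numbers: since $G_1\subseteq G_2$, every monochromatic $G_2$ contains a monochromatic $G_1$, whence $R(G_1, G_2)\leq R(G_2, G_2)=R_2(G_2)$. For the second term, Lemma~\ref{le:R-decom}~(i) with $\chi(G_2)=3$ gives $R_2(G_2)\geq R(G_2, \mathcal{M}(G_2))+(|V(G_2)|-1)$; combining this with the bound $|V(G_1)|\leq |V(G_2)|-1$ already established just before the claim, I obtain $R(G_2, \mathcal{M}(G_2))+|V(G_1)|\leq R_2(G_2)$. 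As both terms in the maximum are at most $R_2(G_2)$, the left-hand inequality follows.

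The main obstacle is the domination relation for the decomposition families. Extracting from the (rather technical) definition of $\mathcal{M}(\cdot)$ that the containment $G_1\subseteq G_2$ descends to a subgraph relation between minimal witnesses requires care: one must use that the witnessing condition is upward closed under edge and vertex addition, and that $\chi(G_1)=\chi(G_2)$ so that the same blow-up structure $K_{(\chi-2)\times t}$ serves both graphs. Once this is in place, the remaining steps are routine applications of Lemmas~\ref{le:R-decom}~(i) and (ii), the monotonicity of Ramsey numbers, and the already-available inequality $|V(G_1)|\leq |V(G_2)|-1$.
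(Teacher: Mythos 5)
Your proposal is correct and follows essentially the same route as the paper's own proof: the left-hand inequality via Lemma~\ref{le:R-decom}~(ii) with the two terms bounded by $R(G_1,G_2)\leq R_2(G_2)$ (monotonicity from $G_1\subseteq G_2$) and by Lemma~\ref{le:R-decom}~(i) together with $|V(G_1)|\leq |V(G_2)|-1$; and the right-hand inequality via the observation that every member of $\mathcal{M}(G_2)$ contains a member of $\mathcal{M}(G_1)$, which the paper states in one sentence and you justify in slightly more detail (upward closure of the witness condition plus minimality, using $\chi(G_1)=\chi(G_2)=3$).
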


\begin{proof}
On the one hand, we have $R_2(G_2)\geq R(G_1, G_2)$ since $G_1\subseteq G_2$.
On the other hand, by Lemma~\ref{le:R-decom}~(i) and since $|V(G_1)|\leq |V(G_2)|-1$, we have
$R_2(G_2)\geq R(G_2, \mathcal{M}(G_2))+|V(G_2)|-1 \geq R(G_2, \mathcal{M}(G_2))+|V(G_1)|.$
Hence, combining with Lemma~\ref{le:R-decom}~(ii), we have $R_2(G_2)\geq \max\{R(G_1, G_2), R(G_2, \mathcal{M}(G_2))+|V(G_1)|\}\geq R(G_1\cup G_2, \mathcal{M}(G_2)).$

Note that if $M\in \mathcal{M}(G_2)$, then embedding a copy of $M$ into one partite set of a complete bipartite graph of large partite sets creates a $G_2$ (and thus a $G_1$),
so $M$ or some of its subgraphs is a member of $\mathcal{M}(G_1)$.
Therefore, $R(G_1\cup G_2, \mathcal{M}(G_2))\geq R(G_1\cup G_2, \mathcal{M}(G_1))$.
\end{proof}

By Claim~\ref{cl:union-1+2} and Inequality~(\ref{eq:union-1-3}), we have $|V_2|\geq R_2(G_2)\geq R(G_1\cup G_2, \mathcal{M}(G_2))=R(\mathcal{M}(G_2), G_1\cup G_2).$
Since $F$ contains no monochromatic $G_1\cup G_2$ of color 2, $F[V_2]$ contains a monochromatic copy $M$ of some graph in $\mathcal{M}(G_2)$ of color 1.
Let $U_1\subseteq V_2$ with $V(M)\subseteq U_1$ and $|U_1|=|V(G_2)|$.

\begin{claim}\label{cl:union-1+3}
$|V_3|\leq |V(G_2)|-3+s_3.$
\end{claim}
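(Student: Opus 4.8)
The plan is to argue by contradiction: suppose instead that $|V_3|\geq |V(G_2)|-2+s_3$, and use this to assemble a monochromatic $G_1\cup G_2$ of color $1$, contradicting our standing assumption on $F$. The resources at hand are the color-$1$ copy of $M\in\mathcal{M}(G_2)$ sitting inside $U_1\subseteq V_2$, together with the fact that every edge between two distinct parts of $V_2,V_3,\ldots,V_k$ is colored $1$; in particular $V_2$, $V_3$ and $Z\colonequals V_4\cup\cdots\cup V_k$ span a complete $3$-partite graph in color $1$, enriched by the edges of $M$ inside $V_2$.

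First I would embed $G_2$ in color $1$ \emph{economically}, so that it barely touches $V_3$ and $Z$. By the defining property of the decomposition family (with $\chi(G_2)=3$), there is a partition $V(G_2)=A\cup I$ with $I$ independent and the edges of $G_2[A]$ contained in a copy of $M$; placing $A$ on the $M$-side inside $V_2$ (its edges realized by the color-$1$ copy of $M$, and $U_1$ has room since $|U_1|=|V(G_2)|$) and placing the independent class $I$ inside $V_3\cup Z$ (every such vertex is joined to $A$ by color-$1$ edges) yields a color-$1$ copy of $G_2$. Crucially $|I|\leq\alpha(G_2)\leq|V(G_2)|-2$, since a $3$-chromatic graph has independence number at most $|V(G_2)|-2$. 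Next I would embed a disjoint color-$1$ copy of $G_1$ into the complete $3$-partite color-$1$ structure (as in Observation~\ref{obs:embedding}), routing its class of size $s_1$ into the leftover of $V_2$, its class of size $s_2$ into $V_3$, and its smallest class of size $s_3=\sigma(G_1)$ into $Z$; this is legitimate since $|Z|\geq|V(G_1)|\geq s_3$ by Inequality~(\ref{eq:union-1-2}).

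The construction succeeds provided the vertex budgets in each part are respected. Since $|V_2|\geq R_2(G_2)\geq 2|V(G_2)|-1$ by Inequality~(\ref{eq:union-1-5}), the part $V_2$ comfortably accommodates the set $A$ and the class of $G_1$ of size $s_1$. The delicate bookkeeping is in $V_3$ and $Z$: there I must fit the independent class $I$ of $G_2$ together with $s_2$ vertices of $G_1$ in $V_3$ and $s_3$ vertices of $G_1$ in $Z$. Here I would exploit the freedom to split $I$ between $V_3$ and $Z$, and the structural inequality $s_1+s_2\leq\sigma(G_2)-1$, which follows by combining Claim~\ref{cl:union-1+1} (giving $|Z|\leq s_3+t_3-1$) with $|Z|\geq|V(G_1)|=s_1+s_2+s_3$ from Inequality~(\ref{eq:union-1-2}). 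A short count then shows that the combined free capacity $(|V_3|-s_2)+(|Z|-s_3)$ is at least $|V(G_2)|-2\geq|I|$ and that each of $V_3$ and $Z$ individually has nonnegative room, so the split is feasible; this produces the color-$1$ $G_1\cup G_2$, the desired contradiction. I expect the main obstacle to be exactly this footprint control: a naive embedding of $G_2$ would occupy too much of $V_3$, so the argument hinges on making $G_2$'s independent class small (via $M$) and letting it overflow into the small part $Z$, which is available precisely because $Z$ is not needed in full for the smallest class of $G_1$.
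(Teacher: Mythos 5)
Your proof is correct and takes essentially the same approach as the paper's: negate the claim, use the color-1 copy of $M\in\mathcal{M}(G_2)$ inside $U_1$ together with the color-1 cross-edges to complete a monochromatic $G_2$, and embed a disjoint monochromatic $G_1$ in the tripartite color-1 structure on $V_2\setminus U_1$, $V_3$ and $V_4\cup\cdots\cup V_k$. The paper's bookkeeping is slightly simpler (it places $G_2$'s non-$U_1$ vertices entirely in a set $W_1\subseteq V_3$ of size $|V(G_2)|-2$ and routes $G_1$ through $V_2\setminus U_1$, a set $W_2\subseteq V_3$ of size $s_3$, and $V_4$, rather than splitting $G_2$'s independent class across $V_3$ and $V_4\cup\cdots\cup V_k$), but this is a difference only in vertex allocation, not in the underlying idea.
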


\begin{proof}
Suppose that $|V_3|\geq |V(G_2)|-2+s_3.$
Then we can choose disjoint subsets $W_1, W_2\subseteq V_3$ with $|W_1|=|V(G_2)|-2$ and $|W_2|=s_3$.
Then $F[U_1\cup W_1]$ contains a monochromatic $G_2$ of color 1.
By Inequalities~(\ref{eq:union-1-2}) and (\ref{eq:union-1-5}), we have $|V_4|\geq |V(G_1)|$ and $|V_2\setminus U_1|\geq |V(G_1)|$.
Thus $F[(V_2\setminus U_1)\cup W_2\cup V_4]$ contains a monochromatic $G_1$ of color 1.
Then $F$ contains a monochromatic $G_1\cup G_2$ of color 1, a contradiction.
\end{proof}

Since $G_1\subseteq G_2$, we have $s_3=\sigma(G_1)\leq \sigma(G_2)=t_3$.
By Lemma~\ref{le:R3}~(i), Lemma~\ref{le:R2}~(i) and Claims~\ref{cl:union-1+1} and \ref{cl:union-1+3}, we have
\begin{align*}
|V_2|= &~ R_{3}(G_1\cup G_2)-|V_3|-|V_4\cup \cdots \cup V_k| \\
\geq &~2(R_2(G_2)-1)+\sigma(G_1)+\sigma(G_2)-(|V(G_2)|-3+s_3)-(s_3+t_3-1) \\
= &~2R_2(G_2)+2-|V(G_2)|-s_3 \\
\geq &~R_2(G_2) + (2(|V(G_2)|-1)+\sigma(G_2))+2-|V(G_2)|-s_3 \\
= &~R_2(G_2)+|V(G_2)|+\sigma(G_2)-s_3 ~\geq R_2(G_2)+|V(G_2)|.
\end{align*}
Thus $|V_2\setminus U_1|\geq R_2(G_2)$.
By Claim~\ref{cl:union-1+2}, we further have $|V_2\setminus U_1|\geq R(G_1\cup G_2, \mathcal{M}(G_1))=R(\mathcal{M}(G_1), G_1\cup G_2).$
Since $F$ contains no monochromatic $G_1\cup G_2$ of color 2, $F[V_2\setminus U_1]$ contains a monochromatic copy $M'$ of some graph in $\mathcal{M}(G_1)$ of color 1.
By Inequality~(\ref{eq:union-1-5}), we can choose $U_2\subseteq V_2\setminus U_1$ with $V(M')\subseteq U_2$ and $|U_2|=|V(G_1)|$.
Recall that $|V_3|\geq |V(G_2)|$ and $|V_4|\geq |V(G_1)|$.
Thus $F[U_1\cup V_3]$ (resp., $F[U_2\cup V_4]$) contains a monochromatic $G_2$ (resp., $G_1$) of color 1.
Then $F$ contains a monochromatic $G_1\cup G_2$ of color 1, a contradiction.
This completes the proof of Theorem~\ref{th:union-1}.
%\end{proof}
\hfill$\square$
%\vspace{0.2cm}

\section{Proofs of results for graphs of chromatic number 3}
\label{sec:proof-3}

In this section, we present our proofs of Theorems~\ref{th:critical}, \ref{th:balanced} and \ref{th:one3}.

\vspace{0.3cm}
\noindent{\bf Proof of Theorem~\ref{th:critical}.}~
%\begin{proof}[Proof of Theorem~\ref{th:critical}]
By Theorem~\ref{th:LBMWW-1} or \ref{th:chro}, it suffices to assume that $\chi(H)=3$, that is, $H$ contains at least one 3-color-critical component.
Let $H=G_1\cup \cdots \cup G_t$, where $G_1, \ldots, G_s$ are $s$ connected 3-color-critical graphs and $G_{s+1}, \ldots, G_t$ are $t-s$ connected bipartite graphs for some $1\leq s\leq t$.
Note that for $i\in [s]$, the graph $G_i$ can be obtained from a bipartite graph by adding one edge within one partite set.
Let $a_i$ and $b_i$ be the sizes of the two partite sets, and assume that the critical edge is contained in the partite set of size $a_i$.
For $i \in [t]\setminus [s]$, we can also assume that $a_i$ and $b_i$ are the sizes of the two partite sets of $G_i$.
By Proposition~\ref{prop:isolated}, it suffices to assume that $|V(G_i)|\geq 2$ for all $i\in [t]$.
Hence, we have $a_i\geq 2$ for $i\in [s]$, $a_i\geq 1$ for $i\in [t]\setminus [s]$ and $b_i\geq 1$ for $i\in [t]$.
Moreover, by Lemma~\ref{le:R3}~(iv), we have
\begin{equation}\label{eq:critical-Ram}
R_3(H)\geq 2|V(H)|+s-2.
\end{equation}

By Inequality~(\ref{eq:lower bound}), it suffices to show that $f(H,P_5)\leq R_{3}(H)$.
For a contradiction, suppose that there exists an edge-coloring $F$ of $K_{R_3(H)}$ without rainbow $P_5$ or monochromatic $H$.
By Lemma~\ref{le:Caseb+}, we can partition $V(F)$ into $k-1$ parts $V_2, V_3, \ldots, V_{k}$ for some $k\geq 4$ satisfying
\begin{itemize}
\item[{\rm (i)}] $\{i\}\subseteq C(V_i)\subseteq \{1,i\}$ for every $i\in \{2, 3, \ldots, k\}$,
\item[{\rm (ii)}] $c(V_i, V_j)=1$ for every $2\leq i<j\leq k$, and
\item[{\rm (iii)}] $|V_2|\geq |V_3|\geq \cdots \geq |V_k|$, $|V_3|\geq \max_{i\in [t]}|V(G_i)|$, $|V_4|\geq \min_{i\in [t]}|V(G_i)|$ and $|V_3\cup \cdots \cup V_k|\geq |V(H)|=\sum_{i\in [t]}(a_i+b_i)$.
\end{itemize}
We first prove a claim related to the upper bound on the size of $V_2$.

\begin{claim}\label{cl:critical-V2-1}
$|V_2|\leq |V(H)|+s-2.$
\end{claim}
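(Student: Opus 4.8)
The plan is to apply Corollary~\ref{cor:matching} to the $2$-edge-coloring that colors $1$ and $2$ induce inside $V_2$. Suppose for contradiction that $|V_2|\geq |V(H)|+s-1$. By property~(i), every edge inside $V_2$ has color $1$ or $2$, so $F[V_2]$ is a $2$-edge-colored complete graph on at least $|V(H)|+s-1$ vertices. Since $H$ has no isolated vertices and its $t$ components satisfy $s\leq t$, Corollary~\ref{cor:matching} (with $m=s$) gives $R(sK_2,H)=|V(H)|+s-1$. Hence, viewing color $1$ as the ``matching color'' and color $2$ as the ``$H$ color'', $F[V_2]$ must contain either a monochromatic $sK_2$ of color $1$ or a monochromatic copy of $H$ of color $2$.

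If $F[V_2]$ contains a monochromatic $H$ of color $2$, we are immediately done, as this contradicts the assumption that $F$ has no monochromatic $H$. So the main case is a color-$1$ matching $M=\{x_1y_1,\ldots,x_sy_s\}$ inside $V_2$, and the bulk of the argument is to upgrade this matching, together with the all-color-$1$ edges leaving $V_2$, into a full color-$1$ copy of $H$. Here I would use property~(ii), which guarantees that every edge between $V_2$ and $W\colonequals V_3\cup\cdots\cup V_k$ has color $1$, together with property~(iii), which gives $|W|\geq |V(H)|\geq \sum_{i\in[t]}b_i$.

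The embedding would go as follows. Write $H-\{e_1,\ldots,e_s\}$ for the bipartite graph obtained by deleting the $s$ critical edges, with its $a$-side $A=\bigsqcup_i A_i$ of size $\sum_i a_i$ and its $b$-side $B=\bigsqcup_i B_i$ of size $\sum_i b_i$; note that $A$ and $B$ are each independent in $H$. I would place $B$ into $W$ (possible since $|W|\geq \sum_i b_i$ and $B$ needs no internal edges) and $A$ into $V_2$ (possible since $|V_2|\geq |V(H)|\geq \sum_i a_i$), arranging for each critical component $i\in[s]$ that the pair $\{x_i,y_i\}$ lies in $A_i$; this is feasible because $a_i\geq 2$ and the matching is vertex-disjoint. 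All edges between $A$ and $B$ then cross from $V_2$ to $W$ and so have color $1$ by property~(ii), and each critical edge $e_i$ is realized by the color-$1$ matching edge $x_iy_i$. Thus $F$ contains a monochromatic $H$ of color $1$, again a contradiction. Either way the assumption fails, so $|V_2|\leq |V(H)|+s-2$.

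The step I expect to be the main obstacle is the verification of this last embedding: one must check that the only non-cross edges of $H$ that need to be realized inside $V_2$ are exactly the $s$ critical edges $e_1,\ldots,e_s$, so that the single color-$1$ matching $M$ suffices. This relies on the structural description of a $3$-color-critical $G_i$ as a bipartite graph plus one edge inside the $a_i$-part, and on $a_i\geq 2$ for $i\in[s]$. The remaining bookkeeping---confirming the size inequalities $|V_2|\geq\sum_i a_i$ and $|W|\geq\sum_i b_i$, and that the matching endpoints can be routed into the correct $A_i$---is routine once the matching and the all-color-$1$ cut are in place.
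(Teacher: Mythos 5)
Your proposal is correct and matches the paper's own proof essentially step for step: both invoke Corollary~\ref{cor:matching} with $m=s$ inside the $\{1,2\}$-colored clique $F[V_2]$ to extract a color-$1$ matching $M$ (the color-$2$ alternative being an immediate contradiction), and then embed a color-$1$ copy of $H$ by realizing each critical edge via a matching edge, placing the remaining $a$-side vertices in $V_2\setminus V(M)$, and placing the $b$-sides in $V_3\cup\cdots\cup V_k$ so that all bipartite edges are cross edges of color~$1$. The only difference is presentational: the paper selects explicit subsets $U_i$ and $W_i$ where you describe the same embedding globally via the sides $A$ and $B$.
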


\begin{proof}
Suppose that $|V_2|\geq |V(H)|+s-1.$
Then by Corollary~\ref{cor:matching}, we have $|V_2|\geq R(sK_2, H)$.
Since $F$ contains no monochromatic $H$ of color 2, there must be a monochromatic matching $M=\{u_1v_1, \ldots, u_sv_s\}$ of color 1 within $V_2$.
Since $|V_2\setminus V(M)|\geq |V(H)|-s-1 > \sum_{i\in [s]}(a_i-2)+\sum_{i\in [t]\setminus [s]}a_i$ and $|V_3\cup \cdots \cup V_k|\geq |V(H)|=\sum_{i\in [t]}(a_i+b_i)$,
we can choose disjoint subsets $U_1, \ldots, U_t \subseteq V_2\setminus V(M)$ and $W_1, \ldots, W_t\subseteq V_3\cup \cdots \cup V_k$ such that $|U_i|=a_i-2$ for $i\in [s]$, $|U_i|=a_i$ for $i\in [t]\setminus [s]$ and $|W_i|=b_i$ for $i\in [t]$.
Then for each $i\in [s]$ (resp., $i\in [t]\setminus [s]$), $F[U_i\cup W_i\cup \{u_i, v_i\}]$ (resp., $F[U_i\cup W_i]$) contains a monochromatic $G_i$ of color 1.
Hence, $F$ contains a monochromatic $H$ of color 1, a contradiction.
\end{proof}

We divide the rest of the proof into two cases based on the size of $V_2$.
\vspace{0.2cm}

\noindent {\bf Case 1.} $|V_2|\geq |V(H)|.$
\vspace{0.2cm}

By Claim~\ref{cl:critical-V2-1}, we may assume that $|V_2|=|V(H)|+m-1$ for some $1\leq m\leq s-1$.
Then by Corollary~\ref{cor:matching}, we have $|V_2|\geq R(mK_2, H)$.
Since $F$ contains no monochromatic $H$ of color 2, there must be a monochromatic matching $M=\{u_1v_1, \ldots, u_mv_m\}$ of color 1 within $V_2$.
Since
%$$|V_2\setminus V(M)|\geq |V(H)|-m-1 > \sum\nolimits_{i\in [m]}(a_i-2)+\sum\nolimits_{i\in [t]\setminus [m]}a_i $$
\begin{align}\label{eq:critical-V2}
|V_2\setminus V(M)|\geq &~|V(H)|-m-1 ~\geq \sum\nolimits_{i\in [m]}(a_i-2)+\sum\nolimits_{i\in [t]\setminus [m]}a_i+\sum\nolimits_{i\in [t]}b_i  \\
~>&~\sum\nolimits_{i\in [m]}(a_i-2)+\sum\nolimits_{i\in [t]\setminus [s]}a_i, \nonumber
\end{align}
we can choose disjoint subsets $U_1, \ldots, U_m, U_{s+1}, \ldots, U_t \subseteq V_2\setminus V(M)$ with $|U_i|=a_i-2$ for $i\in [m]$ and $|U_i|=a_i$ for $i\in [t]\setminus [s]$.
Let $V'_2\colonequals V_2\setminus (V(M)\cup U_1\cup \cdots \cup U_m\cup U_{s+1}\cup \cdots \cup U_t)$.

We next state and prove three claims related to the size of $V_3$.

\begin{claim}\label{cl:critical-V3-1}
We have that either $|V_3|\leq \sum_{i\in [t]}b_i-1$ or $|V_3|\geq |V(H)|.$
\end{claim}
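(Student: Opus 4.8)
The plan is to establish the stated dichotomy by ruling out the intermediate range: I will assume $\sum_{i\in[t]}b_i \le |V_3| \le |V(H)|-1$ and build a monochromatic $H$ of colour $1$, contradicting the choice of $F$. Throughout I will use that every edge between two distinct parts $V_i,V_j$ has colour $1$, together with the already-produced colour-$1$ matching $M=\{u_1v_1,\dots,u_mv_m\}$ inside $V_2$ and the reserved sets $U_i$.

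The first ingredient I would record is that each $3$-color-critical component $G_i$ ($i\in[s]$) satisfies $\sigma(G_i)=1$: deleting the critical edge $x_iy_i$ leaves a bipartite graph with parts $A_i\ni x_i,y_i$ of size $a_i$ and $B_i$ of size $b_i$, and moving $y_i$ to its own class yields the proper $3$-coloring with independent classes $A_i\setminus\{y_i\}$, $B_i$, $\{y_i\}$ of sizes $a_i-1,\,b_i,\,1$. Consequently such a $G_i$ embeds in colour $1$ whenever $A_i\setminus\{y_i\}$, $B_i$ and $\{y_i\}$ are placed into three pairwise distinct parts, since then every edge of $G_i$ — including the critical edge — becomes a cross-edge.

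With this in hand I would distribute the components as follows. For $i\in[m]$ keep $A_i=\{u_i,v_i\}\cup U_i\subseteq V_2$, using the matching edge $u_iv_i$ as the critical edge, and put $B_i\subseteq V_3$; for the bipartite components $s<i\le t$ put $A_i=U_i\subseteq V_2$ and $B_i\subseteq V_3$; and for the remaining critical components $m<i\le s$ place $A_i\setminus\{y_i\}$ in $V_2$, $B_i$ in $V_3$, and the single vertex $y_i$ somewhere in $V_4\cup\cdots\cup V_k$. Three capacity checks then finish the construction: $V_2$ receives $\sum_{i\in[t]}a_i-(s-m)\le|V(H)|\le|V_2|$ vertices, $V_3$ receives $\sum_{i\in[t]}b_i\le|V_3|$ vertices by the assumed lower bound, and $V_4\cup\cdots\cup V_k$ receives exactly the $s-m$ vertices $y_i$, which fit because
$$|V_4\cup\cdots\cup V_k|=R_3(H)-|V_2|-|V_3|\ge(2|V(H)|+s-2)-(|V(H)|+m-1)-(|V(H)|-1)=s-m,$$
using Inequality~(\ref{eq:critical-Ram}), the Case~1 value $|V_2|=|V(H)|+m-1$, and the assumed bound $|V_3|\le|V(H)|-1$. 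Since every edge used is a colour-$1$ cross-edge or a matching edge, this produces a monochromatic $H$ of colour $1$.

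The step I expect to be most delicate is the accounting in $V_4\cup\cdots\cup V_k$: the whole point of spending the matching $M$ on the components $i\in[m]$ is to let those components keep their critical edges inside $V_2$, thereby lowering the demand on $V_4\cup\cdots\cup V_k$ from $s$ singleton vertices down to only $s-m$ — which is exactly the quantity the displayed estimate can guarantee. Because Inequality~(\ref{eq:critical-Ram}) and both case hypotheses on $|V_2|$ and $|V_3|$ are consumed with essentially no slack, the main care lies in checking that these three bounds align; the disjointness and packing inside $V_2$ is then a routine count against $|V_2'|$.
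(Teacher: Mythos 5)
Your proof is correct and takes essentially the same approach as the paper: the paper also rules out the intermediate range $\sum_{i\in[t]}b_i\leq |V_3|\leq |V(H)|-1$ by embedding the $m$ matched critical components with their $a_i$-classes (and matching edge as critical edge) inside $V_2$ and their $b_i$-classes in $V_3$, the remaining $s-m$ critical components with $a_i-1$ vertices in $V_2'$, $b_i$ vertices in $V_3$ and one vertex in $V_4\cup\cdots\cup V_k$, and the bipartite components across $V_2$ and $V_3$, using exactly your estimate $|V_4\cup\cdots\cup V_k|\geq s-m$. The only difference is presentational: you make explicit the singleton-colour-class observation and the three-distinct-parts embedding principle, which the paper leaves implicit when asserting that each $F_i$ contains a monochromatic $G_i$ of colour $1$.
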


\begin{proof}
Suppose that $\sum_{i\in [t]}b_i \leq |V_3| \leq |V(H)|-1.$
By Inequality~(\ref{eq:critical-Ram}), we have
$$|V_4\cup \cdots \cup V_k|=R_3(H)-|V_2|-|V_3|\geq 2|V(H)|+s-2 - (|V(H)|+m-1) - (|V(H)|-1) = s-m.$$
Let $u_{m+1}, \ldots, u_s$ be $s-m$ distinct vertices in $V_4\cup \cdots \cup V_k$.
By Inequality~(\ref{eq:critical-V2}), we can choose $s-m$ disjoint subsets $U_{m+1}, \ldots, U_s$ of $V'_2$ with $|U_i|=a_i-1$ for each $i\in [s]\setminus [m]$.
Moreover, since $|V_3|\geq \sum_{i\in [t]}b_i$, we can choose $t$ disjoint subsets $W_1, \ldots, W_t$ of $V_3$ with $|W_i|=b_i$ for each $i\in [t]$.
Let $F_i\colonequals F[U_i\cup W_i\cup \{u_i, v_i\}]$ for $i\in [m]$, $F_i\colonequals F[U_i\cup W_i\cup \{u_i\}]$ for $i\in [s]\setminus [m]$, and $F_i\colonequals F[U_i\cup W_i]$ for $i\in [t]\setminus [s]$.
Then for each $i\in [t]$, $F_i$ contains a monochromatic $G_i$ of color 1.
Hence, $F$ contains a monochromatic $H$ of color 1, a contradiction.
\end{proof}

\begin{claim}\label{cl:critical-V3-2}
$|V_3|\leq \sum_{i\in [t]}b_i-1.$
\end{claim}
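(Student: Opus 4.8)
The plan is to establish the direction left open by Claim~\ref{cl:critical-V3-1}: assuming for contradiction that $|V_3|\geq |V(H)|$, I will build a monochromatic copy of $H$ in color $1$, contradicting the choice of $F$, which forces the remaining alternative $|V_3|\leq \sum_{i\in [t]}b_i-1$. The key first observation is that the branch $|V_3|\geq |V(H)|$ is in fact tightly sandwiched: since $|V_2|\geq |V_3|$ and (in Case~1) $|V_2|=|V(H)|+m-1$, we get $|V(H)|\leq |V_3|\leq |V(H)|+m-1$, so $|V_3|=|V(H)|+m'-1$ for some $1\leq m'\leq m$. By Corollary~\ref{cor:matching} (applicable since $m'\leq m\leq s-1\leq t$), we have $|V_3|=R(m'K_2,H)$; as $F[V_3]$ uses only colors $1$ and $3$ and contains no monochromatic $H$ of color $3$, it must contain a color-$1$ matching $M'$ of size $m'$, mirroring the matching $M$ of size $m$ already found in $V_2$.

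The guiding idea is that a $3$-color-critical component $G_i$, viewed as $K_{a_i,b_i}$ together with a critical edge inside its $a_i$-part, embeds monochromatically in color $1$ in three interchangeable ways, each exploiting that every edge between distinct parts $V_j$ is colored $1$: (a) place the whole $a_i$-side in $V_2$, using an edge of $M$ as the critical edge, and the $b_i$-side in $V_3$; (b) symmetrically, place the $a_i$-side in $V_3$ using an edge of $M'$, and the $b_i$-side in $V_2$; or (c) place $a_i-1$ vertices of the $a_i$-side in $V_2$, the $b_i$-side in $V_3$, and a single vertex $y_i\in V_4\cup\cdots\cup V_k$ as the last vertex of the $a_i$-side, so that the critical edge $x_iy_i$ and all bipartite edges run between distinct parts. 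Thus each edge of $M$, each edge of $M'$, and each vertex of $V_4\cup\cdots\cup V_k$ can serve as the ``critical resource'' for exactly one $3$-color-critical component, while the bipartite components $G_{s+1},\ldots,G_t$ are placed directly across $V_2$ and $V_3$ with no critical resource needed.

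It then remains to count the available resources. From $n=R_3(H)$ and Inequality~(\ref{eq:critical-Ram}) we obtain $|V_4\cup\cdots\cup V_k|=n-|V_2|-|V_3|\geq (2|V(H)|+s-2)-(|V(H)|+m-1)-(|V(H)|+m'-1)=s-m-m'$. Hence the total number of critical resources is at least $m+m'+\max\{0,\,s-m-m'\}\geq s$, exactly covering the $s$ critical components. A direct vertex count confirms that the embeddings sit on pairwise disjoint vertex sets: each matching edge and each single vertex is itself a genuine vertex of the component it serves, so nothing is wasted, and since each of $V_2,V_3$ has at least $|V(H)|$ vertices while $V_4\cup\cdots\cup V_k$ supplies the at most $s-m-m'$ needed single vertices, the $a$- and $b$-sides pack into the three regions. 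This yields a monochromatic $H$ of color $1$, a contradiction; therefore $|V_3|<|V(H)|$, and Claim~\ref{cl:critical-V3-1} gives $|V_3|\leq \sum_{i\in [t]}b_i-1$.

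The main obstacle is securing the full quota of $s$ color-$1$ critical edges when $m<s-1$: the matching $M$ alone gives only $m$ of them, and in this branch $V_3$ is too small to directly force a color-$1$ matching of size $s-m$ through Corollary~\ref{cor:matching}. The resolution is to play all three sources against one another — the two-sided matchings $M,M'$ together with the single-vertex gadgets from $V_4\cup\cdots\cup V_k$ — observing that the sandwich $|V(H)|\leq |V_3|\leq |V(H)|+m-1$ and the lower bound on $R_3(H)$ combine to make the three contributions sum to at least $s$. Verifying that the corresponding pieces pack disjointly into $V_2$, $V_3$ and $V_4\cup\cdots\cup V_k$ is routine but is the step demanding the most care.
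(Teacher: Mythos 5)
Your proof is correct and follows essentially the same route as the paper's: after reducing to the branch $|V_3|\geq |V(H)|$ via Claim~\ref{cl:critical-V3-1}, both arguments extract a second color-1 matching $M'$ inside $V_3$ using Corollary~\ref{cor:matching}, draw the remaining single vertices from $V_4\cup\cdots\cup V_k$, and embed the $3$-color-critical components in the same three ways (critical edge taken from $M$, from $M'$, or running between $V_2$ and a higher part), with the bipartite components placed across $V_2$ and $V_3$. The only cosmetic difference is that you fix the matching size as $m'=|V_3|+1-|V(H)|\leq m$ via the sandwich $|V_3|\leq |V_2|=|V(H)|+m-1$, whereas the paper caps it at $s-m$ by a case split; both choices give the same resource count and the same packing.
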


\begin{proof}
Suppose for a contradiction that $|V_3|\geq \sum_{i\in [t]}b_i,$ so $|V_3|\geq |V(H)|$ by Claim~\ref{cl:critical-V3-1}.
Let $n\colonequals |V_3|+1-|V(H)|$ if $|V_3|\leq |V(H)|+(s-m)-2$, and $n\colonequals s-m$ if $|V_3|\geq |V(H)|+(s-m)-1$.
Then by Corollary~\ref{cor:matching}, we have $|V_3|\geq R(nK_2, H)$.
Since $F$ contains no monochromatic $H$ of color 3, there must be a monochromatic matching $M'=\{u_{m+1}v_{m+1}, \ldots, u_{m+n}v_{m+n}\}$ of color 1 within $V_3$.
By Inequality~(\ref{eq:critical-Ram}), if $|V_3|\leq |V(H)|+(s-m)-2$, then
\begin{align*}
|V_4\cup \cdots \cup V_k|= &~R_3(H)-|V_2|-|V_3| \\
\geq &~2|V(H)|+s-2 - (|V(H)|+m-1) - (|V(H)|+n-1) \\
= &~s-m-n;
\end{align*}
if $|V_3|\geq |V(H)|+(s-m)-1$, then
$$|V_4\cup \cdots \cup V_k|>0= s-m-n.$$
Thus there eixst $s-m-n$ distinct vertices $u_{m+n+1}, \ldots, u_s$ in $V_4\cup \cdots \cup V_k$.
By Inequality~(\ref{eq:critical-V2}), we can choose $s-m$ disjoint subsets $U_{m+1}, \ldots, U_s$ of $V'_2$ with $|U_i|=b_i$ for $i\in [m+n]\setminus [m]$ and $|U_i|=a_i-1$ for $i\in [s]\setminus [m+n]$.
Moreover, since $|V_3|\geq |V(H)|$, we can choose $t$ disjoint subsets $W_1, \ldots, W_t\subseteq V_3\setminus V(M')$ with $|W_i|=a_i-2$ for $i\in \{m+1, \ldots, m+n\}$ and $|W_i|=b_i$ for $i\in [t]\setminus \{m+1, \ldots, m+n\}$.
Let $F_i\colonequals F[U_i\cup W_i\cup \{u_i, v_i\}]$ for $i\in [m+n]$, $F_i\colonequals F[U_i\cup W_i\cup \{u_i\}]$ for $i\in [s]\setminus [m+n]$, and $F_i\colonequals F[U_i\cup W_i]$ for $i\in [t]\setminus [s]$.
Then for each $i\in [t]$, $F_i$ contains a monochromatic $G_i$ of color 1.
Hence, $F$ contains a monochromatic $H$ of color 1, a contradiction.
\end{proof}

\begin{claim}\label{cl:critical-V3-3}
$|V_3|\leq s-m-1$.
\end{claim}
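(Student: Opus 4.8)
The plan is to argue by contradiction: assume $|V_3|\geq s-m$ and exhibit a monochromatic copy of $H$ in color $1$, contradicting the choice of $F$. The entire construction will live on the color-$1$ multipartite structure. By properties (i) and (ii) of the partition, every edge between two distinct parts among $V_2,V_3,\ldots,V_k$ has color $1$; hence color $1$ contains the complete multipartite graph with parts $V_2,V_3,\ldots,V_k$, and in addition it contains the matching $M=\{u_1v_1,\ldots,u_mv_m\}$ inside $V_2$. I would keep in mind that each $G_i$ ($i\in[t]$) is bipartite with parts of sizes $a_i,b_i$ (plus one critical edge inside the $a_i$-part when $i\in[s]$), and that the $b_i$-part is an independent set, so it may be distributed arbitrarily among the parts as long as it is kept away from $V_2$ and from any part containing one of its neighbours.

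First I would place the $a_i$-parts almost entirely in $V_2$: for $i\in[m]$ I use the matching edge $u_iv_i$ to realise the critical edge inside $V_2$ and put the whole $a_i$-part there; for $i\in[s]\setminus[m]$ I realise the critical edge as a cross edge by leaving one of its two endpoints outside $V_2$ (this is where $|V_3|\geq s-m$ is used, to accommodate the $s-m$ split endpoints) and putting the remaining $a_i-1$ vertices of the $a_i$-part in $V_2$; for $i\in[t]\setminus[s]$ I put the whole $a_i$-part in $V_2$. The resulting demand on $V_2$ is $\sum_{i\in[t]}a_i-(s-m)\leq |V(H)|\leq|V_2|$, so $V_2$ comfortably absorbs all of these. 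It then remains to place, inside $V_3\cup V_4\cup\cdots\cup V_k$, the $s-m$ split endpoints together with all the $b_i$-parts, and to assemble the pieces into a monochromatic $H$ of color $1$.

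For the counting I would use Inequality~(\ref{eq:critical-Ram}) to obtain $|V_3|+|V_4\cup\cdots\cup V_k|=R_3(H)-|V_2|\geq |V(H)|+s-m-1$, which exceeds the total demand $\sum_{i\in[t]}b_i+(s-m)$ outside $V_2$ by exactly $\sum_{i\in[t]}a_i-1\geq 0$; so there is always enough room in aggregate. Since each $b_i$-part is an independent set whose neighbours all lie in $V_2$ (or, for $i\in[s]\setminus[m]$, in $V_2$ together with the single split endpoint), it can be split freely between $V_3$ and $V_4\cup\cdots\cup V_k$, the only rigid constraint being that the $b_i$-part of $G_i$ with $i\in[s]\setminus[m]$ must avoid the one part holding its split endpoint.

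I expect the main obstacle to be turning this aggregate count into a placement that overflows neither $V_3$ nor $V_4\cup\cdots\cup V_k$ individually, given the forced pairing between each split endpoint and its $b_i$-part. The clean way around this is a short case split on the size of $V_3$: when $V_3$ is small I would route the split endpoints into $V_3$ and the corresponding $b_i$-parts into $V_4\cup\cdots\cup V_k$, and when $V_3$ is large I would do the reverse, pushing split endpoints into $V_4\cup\cdots\cup V_k$ so that their $b_i$-parts may sit in $V_3$. Here the upper bound $|V_3|\leq\sum_{i\in[t]}b_i-1$ from Claim~\ref{cl:critical-V3-2} (guaranteeing that $V_4\cup\cdots\cup V_k$ is not too small) together with the structural lower bound $|V_3|\geq\max_{i\in[t]}|V(G_i)|$ from Lemma~\ref{le:Caseb+} (guaranteeing that $V_3$ can swallow a whole $b_i$-part when needed) are exactly what make both regimes go through; balancing these two bounds is the crux of the argument.
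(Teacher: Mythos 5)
Your proposal shares the paper's general shape (argue by contradiction, build a monochromatic $H$ in color $1$, realize the critical edges of the components indexed by $[s]\setminus[m]$ as cross edges), but your placement creates a genuine gap. You insist on keeping the $b_i$-parts of \emph{all} components outside $V_2$, so the $s-m$ split components force $\sum_{i\in[s]\setminus[m]}b_i$ vertices into $V_3\cup\cdots\cup V_k$, each such $b_i$-part constrained to avoid the part holding its own split endpoint. Your global two-case split then requires either $|V_4\cup\cdots\cup V_k|\geq\sum_{i\in[s]\setminus[m]}b_i$ (Case A) or $|V_3|\geq\sum_{i\in[s]\setminus[m]}b_i$ (Case B), and neither follows from what is known at this point in the proof. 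Concretely, let $G_1=G_2=G_3$ be $K_{2,100}$ plus a critical edge in the $2$-side (so $t=s=3$, $a_i=2$, $b_i=100$, $|V(H)|=306$) and $m=1$; the size profile $|V_2|=306$, $|V_3|=199$, $|V_4|=108$ (with $k=4$) is consistent with Claim~\ref{cl:critical-V2-1}, Claims~\ref{cl:critical-V3-1} and~\ref{cl:critical-V3-2}, Lemma~\ref{le:Caseb+}, Inequality~(\ref{eq:critical-Ram}), and your contradiction hypothesis $|V_3|\geq s-m=2$, yet $\sum_{i\in[s]\setminus[m]}b_i=200$ exceeds both $199$ and $108$, so both of your cases fail. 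The two bounds you call the crux cannot repair this: Claim~\ref{cl:critical-V3-2} only yields $|V_4\cup\cdots\cup V_k|\geq\sum_{i\in[t]}a_i+s-m$ (here $8$), and $|V_3|\geq\max_{i\in[t]}|V(G_i)|$ lets $V_3$ swallow \emph{one} large $b_i$-part, not two; the approach breaks whenever $s-m\geq2$ and several split components have large $b$-sides. (A finer per-component assignment, sending one endpoint to $V_3$ and another to $V_4$, happens to survive this example, but that is a packing argument over parts of unknown sizes which you have not supplied, and it is not the case split you describe.)

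The missing idea---and what the paper actually does---is to swap the two sides for the split components: their $b_i$-parts go \emph{inside} $V_2$, into the leftover set $V_2'$ (which has roughly $\sum_i b_i$ spare vertices), while the \emph{entire} $a_i$-side goes outside $V_2$, with the critical edge taken as a matching edge $u_iv_i$ where $u_i\in V_3$ and $v_i\in V_4\cup\cdots\cup V_k$, hence of color $1$ by property (ii). The remaining $a_i-2$ vertices of each split component and all the other $b$-parts can then be scattered arbitrarily over $(V_3\cup\cdots\cup V_k)\setminus V(M')$, because every edge they need either goes to $V_2$ or is the cross edge $u_iv_i$. The only per-side requirements left are $|V_3|\geq s-m$ (your hypothesis) and $|V_4\cup\cdots\cup V_k|>s-m$ (from Inequality~(\ref{eq:critical-Ram}) and Claim~\ref{cl:critical-V3-2}), and the aggregate bound $|V_3\cup\cdots\cup V_k|\geq|V(H)|+s-m-1$ finishes the count. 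This eliminates the pairing tension you identified instead of trying to balance it.
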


\begin{proof}
Suppose that $|V_3|\geq s-m$.
By Inequality~(\ref{eq:critical-Ram}) and Claim~\ref{cl:critical-V3-2}, we have
\begin{align*}
|V_4\cup \cdots \cup V_k|= &~R_3(H)-|V_2|-|V_3| \\
\geq &~2|V(H)|+s-2 - (|V(H)|+m-1) - \left(\sum\nolimits_{i\in [t]}b_i-1\right) \\
> &~s-m.
\end{align*}
Thus we can choose $s-m$ distinct vertices $u_{m+1}, \ldots, u_{s}\in V_3$ and $s-m$ distinct vertices $v_{m+1}, \ldots, v_{s}\in V_4\cup \cdots \cup V_k$ such that $M'\colonequals \{u_{m+1}v_{m+1}, \ldots, u_sv_s\}$ is a monochromatic matching of color 1.
Moreover, since
$$|V_3\cup \cdots \cup V_k|=R_3(H)-|V_2|\geq 2|V(H)|+s-2 - (|V(H)|+m-1)\geq |V(H)|,$$
we can choose disjoint subsets $W_1, \ldots, W_t \subseteq (V_3\cup \cdots \cup V_k)\setminus V(M')$ such that $|W_i|=a_i-2$ for $i\in \{m+1, \ldots, s\}$ and $|W_i|=b_i$ for each $i\in [t]\setminus \{m+1, \ldots, s\}$.
By Inequality~(\ref{eq:critical-V2}), we can choose $s-m$ disjoint subsets $U_{m+1}, \ldots, U_s$ of $V'_2$ with $|U_i|=b_i$ for each $i\in \{m+1, \ldots, s\}$.
Then for each $i\in [s]$ (resp., $i\in [t]\setminus [s]$), $F[U_i\cup W_i\cup \{u_i, v_i\}]$ (resp., $F[U_i\cup W_i]$) contains a monochromatic $G_i$ of color 1.
Hence, $F$ contains a monochromatic $H$ of color 1, a contradiction.
\end{proof}

By Claim~\ref{cl:critical-V3-2}, we have $|V_3|\leq \sum_{i\in [t]}b_i-1.$
Recall that $|V_3\cup \cdots \cup V_k|\geq |V(H)|=\sum_{i\in [t]}(a_i+b_i).$
We now choose $W\subseteq V_3\cup \cdots \cup V_k$ with $|W|=\sum\nolimits_{i\in [t]}b_i$ starting with $V_3$, and all the vertices of $V_i$ are selected before taking vertices from $V_{i+1}$ for $i\geq 3$.
Assume that for some $j>3$ we have $|V_3\cup \cdots \cup V_{j-1}|\leq \sum_{i\in [t]}b_i-1$ and $|V_3\cup \cdots \cup V_{j}|\geq \sum_{i\in [t]}b_i$.
Recall that $|V_2|\geq |V_3|\geq \cdots \geq |V_k|$.
Thus by Claim~\ref{cl:critical-V3-3}, we have $|V_3\cup \cdots \cup V_{j}|\leq |V_3\cup \cdots \cup V_{j-1}|+|V_3|\leq \sum_{i\in [t]}b_i-1+s-m-1.$
Then combining with Inequality~(\ref{eq:critical-Ram}), we have
\begin{align*}
|V_{j+1}\cup \cdots \cup V_k|= &~R_3(H)-|V_2|-|V_3\cup \cdots \cup V_{j}| \\
\geq &~2|V(H)|+s-2 - (|V(H)|+m-1) - \left(\sum\nolimits_{i\in [t]}b_i-1+s-m-1\right) \\
= &~|V(H)|-\sum\nolimits_{i\in [t]}b_i+1 ~> \sum\nolimits_{i\in [t]}a_i-s.
\end{align*}
Now we can choose distinct vertex $u_1, \ldots, u_s\in V_2$ and disjoint subsets $W_1, \ldots, W_t \subseteq W$ and $U_1, \ldots, U_t \subseteq V_{j+1}\cup \cdots \cup V_k$ such that $|W_i|=b_i$ for $i\in [t]$, $|U_i|=a_i-1$ for $i\in [s]$, and $|U_i|=a_i$ for $i\in [t]\setminus [s]$.
Then for each $i\in [s]$ (resp., $i\in [t]\setminus [s]$), $F[U_i\cup W_i\cup \{u_i\}]$ (resp., $F[U_i\cup W_i]$) contains a monochromatic $G_i$ of color 1.
Hence, $F$ contains a monochromatic $H$ of color 1, a contradiction.
This completes the proof of Case~1.
\vspace{0.2cm}

\noindent {\bf Case 2.} $|V_2|\leq |V(H)|-1.$
\vspace{0.2cm}

Let $x\colonequals \max\left\{\sum\nolimits_{i\in [t]}a_i -s, \sum\nolimits_{i\in [t]}b_i\right\}$
and $y\colonequals \min\left\{\sum\nolimits_{i\in [t]}a_i -s, \sum\nolimits_{i\in [t]}b_i\right\}.$
Then $x\geq y\geq t\geq s$ and
%$2y\leq x+y =\sum\nolimits_{i\in [t]}a_i +\sum\nolimits_{i\in [t]}b_i -s=|V(H)|-s$.
\begin{equation}\label{eq:critical-x+y}
2y\leq x+y =\sum\nolimits_{i\in [t]}a_i -s+\sum\nolimits_{i\in [t]}b_i=|V(H)|-s.
\end{equation}

We next state and prove three claims related to the sizes of $V_2$ and $V_3$.

\begin{claim}\label{cl:critical-V2-2}
$|V_2|\leq x-1$.
\end{claim}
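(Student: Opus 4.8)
The plan is to argue by contradiction: assuming $|V_2|\geq x$, I will exhibit a monochromatic copy of $H$ in color $1$, contradicting the choice of $F$. The structural input is property (ii) of the partition, namely $c(V_i,V_j)=1$ for all $2\leq i<j\leq k$. Thus the color-$1$ subgraph of $F$ contains the complete multipartite graph with parts $V_2,V_3,\ldots,V_k$; in particular any two vertices lying in distinct parts are joined by a color-$1$ edge. Hence it suffices to realize $H$ inside this multipartite skeleton using only edges that cross between parts, since each part may be treated as an independent set regardless of its internal colors.

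The reduction goes through a suitable proper $3$-coloring of $H$. For each $i\in[s]$, color all but one endpoint of the critical edge of the $a_i$-side by a first color, that single remaining endpoint by a second color, and the entire $b_i$-side by a third; for each $i\in[t]\setminus[s]$, color the $a_i$-side and the $b_i$-side by the first and third colors. Since the only edge inside an $a_i$-side is the critical one and its two endpoints now receive distinct colors, this is a proper $3$-coloring, and its classes have sizes $\sum_{i\in[t]}a_i-s$, $s$, and $\sum_{i\in[t]}b_i$, i.e. sizes $x$, $s$, $y$ by the definitions of $x,y$; recall $s\leq t\leq y\leq x$. Every edge of $H$ therefore joins two of these three classes, so it is enough to find three pairwise disjoint vertex sets $X,Y,Z$ with $|X|=x$, $|Y|=y$, $|Z|=s$ such that all edges between different sets are color $1$: placing the three classes into $X$, $Y$, $Z$ then makes every edge of $H$ a color-$1$ edge, yielding a monochromatic $H$.

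I would locate these sets as follows. Take $X$ to be any $x$ vertices of $V_2$, available since $|V_2|\geq x$ (no internal structure is needed because the $x$-class is independent in $H$). For $Y$ and $Z$ I split $\{V_3,\ldots,V_k\}$ into two groups $\mathcal A,\mathcal B$ of whole parts, put $Y$ into the parts of $\mathcal A$ and $Z$ into the parts of $\mathcal B$, and require $\|\mathcal A\|\geq y$ and $\|\mathcal B\|\geq s$, where $\|\mathcal A\|$ denotes the number of vertices in the parts of $\mathcal A$. Because $\mathcal A$, $\mathcal B$, and $V_2$ use pairwise disjoint parts, every edge between two of $X,Y,Z$ runs between distinct parts and is therefore color $1$. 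The existence of such a split is the only quantitative point: by Inequality~(\ref{eq:critical-x+y}) and $x\geq y$ we have $\|V_3\cup\cdots\cup V_k\|\geq|V(H)|=x+y+s\geq 2y+s$. Ordering the parts by decreasing size and adding them to $\mathcal A$ until $\|\mathcal A\|$ first reaches $y$, I distinguish two cases: if $|V_3|\geq y$, set $\mathcal A=\{V_3\}$ and $\mathcal B=\{V_4,\ldots,V_k\}$, where the Case~2 bound $|V_2|,|V_3|\leq|V(H)|-1$ together with Inequality~(\ref{eq:critical-Ram}) gives $\|\mathcal B\|=R_3(H)-|V_2|-|V_3|\geq\big(2|V(H)|+s-2\big)-2\big(|V(H)|-1\big)=s$; if $|V_3|<y$, then every part has fewer than $y$ vertices, so the greedy collection $\mathcal A$ stops with $y\leq\|\mathcal A\|<2y$, whence $\|\mathcal B\|=\|V_3\cup\cdots\cup V_k\|-\|\mathcal A\|>(2y+s)-2y=s$.

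I expect the main obstacle to be exactly this partitioning step, i.e. securing $\|\mathcal A\|\geq y$ and $\|\mathcal B\|\geq s$ simultaneously using unions of whole parts. A single very large part $V_3$ could a priori absorb almost all vertices and leave too few for $Z$, and it is precisely the Case~2 hypothesis $|V_2|\leq|V(H)|-1$ (forcing $|V_3|\leq|V_2|\leq|V(H)|-1$) combined with the lower bound $R_3(H)\geq 2|V(H)|+s-2$ that forbids this. Everything else—the $3$-coloring bookkeeping and the translation of ``distinct parts'' into ``color $1$''—is routine, so the proof reduces to the short case analysis above.
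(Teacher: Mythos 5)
Your proof is correct and takes essentially the same approach as the paper: the paper likewise assumes $|V_2|\geq x$, lets $j$ be minimal with $|V_3\cup\cdots\cup V_j|\geq y$ (your greedy family $\mathcal{A}$), shows $|V_{j+1}\cup\cdots\cup V_k|\geq s$ by the same case distinction on whether $|V_3|\geq y$, and then finds a monochromatic $K_{x,y,s}$ (hence a monochromatic $H$) of color 1 across $V_2$, $V_3\cup\cdots\cup V_j$ and $V_{j+1}\cup\cdots\cup V_k$. The only differences are cosmetic: you spell out the embedding $H\subseteq K_{x,y,s}$ explicitly, and in the subcase $|V_3|<y$ you bound the leftover using $|V_3\cup\cdots\cup V_k|\geq|V(H)|$ rather than the lower bound $R_3(H)\geq 2|V(H)|+s-2$.
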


\begin{proof}
Suppose that $|V_2|\geq x$.
Let $j$ be the minimum index such that $|V_3\cup \cdots \cup V_j|\geq y$.
Note that $|V_3\cup \cdots \cup V_j|\leq |V(H)|-1$.
To see this, if $j=3$, then $|V_3\cup \cdots \cup V_j|=|V_3|\leq |V_2|\leq |V(H)|-1$; if $j\geq 4$, then $|V_j|\leq |V_3|\leq |V_3\cup \cdots \cup V_{j-1}|\leq y-1$, so $|V_3\cup \cdots \cup V_j|\leq 2(y-1)\leq |V(H)|-1$ by Inequality~(\ref{eq:critical-x+y}).
Combining with Inequality~(\ref{eq:critical-Ram}), we have
\begin{align*}
|V_{j+1}\cup \cdots \cup V_{k}|= &~R_3(H)-|V_2|-|V_3\cup \cdots \cup V_{j}| \\
\geq &~2|V(H)|+s-2 - (|V(H)|-1) - (|V(H)|-1) ~=s.
\end{align*}
Then there is a monochromatic $K_{x, y, s}$ (and thus a monochromatic $H$) of color 1 between $V_2$, $V_3\cup \cdots \cup V_{j}$ and $V_{j+1}\cup \cdots \cup V_{k}$, a contradiction.
\end{proof}

\begin{claim}\label{cl:critical-V2V3}
$|V_2|+|V_3|\leq x+s-1$.
\end{claim}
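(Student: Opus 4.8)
The plan is to argue by contradiction: assuming $|V_2|+|V_3|\geq x+s$, I would construct a monochromatic copy of $H$ in color $1$, contradicting the choice of $F$. The guiding principle is that all edges between distinct parts $V_i,V_j$ are colored $1$, so any family of pairwise part-disjoint vertex sets spans a complete multipartite graph in color $1$; hence within-class edges never need to be controlled. Since each $3$-color-critical component $G_i$ $(i\in[s])$ is a bipartite graph with sides of sizes $a_i,b_i$ together with one extra edge inside the $a_i$-side, the whole of $H$ embeds into color $1$ as follows. Take two part-disjoint groups $G_A,G_B$; put all the $A$-sides (total $\sum_i a_i$ vertices, including the $s$ critical ones) into $G_A$ and all the $B$-sides (total $\sum_i b_i$ vertices) into $G_B$. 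The bipartite backbones then use only $G_A$--$G_B$ edges, all of color $1$, while the $s$ critical edges require merely a color-$1$ matching of size $s$ inside $G_A$ (place the endpoint pairs $u_i,v_i$ on this matching). Thus it suffices to exhibit two part-disjoint groups, one of size $\sum_i a_i$ carrying a color-$1$ matching of size $s$, and the other of size $\sum_i b_i$.

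First I would record the size budget. By Claim~\ref{cl:critical-V2-2} we have $|V_2|,|V_3|\leq x-1$, so the hypothesis forces $|V_3|\geq s+1$; moreover, using Inequality~(\ref{eq:critical-Ram}) and Inequality~(\ref{eq:critical-x+y}),
\[
|V_4\cup\cdots\cup V_k| = R_3(H)-|V_2|-|V_3| \geq \bigl(2|V(H)|+s-2\bigr)-2(x-1) = 2y+3s .
\]
Since $x+s=\max\{\sum_i a_i,\ \sum_i b_i+s\}\geq \sum_i a_i$, the set $V_2\cup V_3$ always has room for all the $A$-sides, and because $\min\{|V_2|,|V_3|\}=|V_3|\geq s+1$ it supports a color-$1$ matching of size $s$ via $s$ disjoint edges between $V_2$ and $V_3$. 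Hence in the \emph{main case} $|V_4\cup\cdots\cup V_k|\geq \sum_i b_i$ I would take $G_A=V_2\cup V_3$ with its $V_2$--$V_3$ matching and $G_B=V_4\cup\cdots\cup V_k$, directly embed $H$ in color $1$ in the spirit of Observation~\ref{obs:embedding}, and reach a contradiction.

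The remaining case is $\sum_i b_i>|V_4\cup\cdots\cup V_k|\geq 2y+3s$, which together with $\sum_i b_i\leq x$ forces $x=\sum_i b_i$ to be the large class. Now $G_B$ is too big to fit inside $V_4\cup\cdots\cup V_k$ and must absorb one of the large parts $V_2,V_3$, so I would build $G_B$ greedily from the largest parts until its size reaches $\sum_i b_i$ and let $G_A$ consist of the remaining (smallest) parts, whose total is at least $\sum_i a_i=y+s$ by the budget above. Here one should also use that each bipartite component $G_i$ $(i>s)$ has no critical edge, so its two sides may be assigned to $G_A$ or $G_B$ independently, giving extra freedom to meet both size targets while keeping all $s$ critical $A$-sides (and their matching) in $G_A$. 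The main obstacle is certifying a color-$1$ matching of size $s$ inside $G_A$ in this regime: if $G_A$ contains two parts of size at least $s$ the matching is immediate from the inter-part edges, and otherwise the parts of $G_A$ are individually small relative to the total $|G_A|\geq 2s$, whence a matching of size $\lfloor |G_A|/2\rfloor\geq s$ still exists among the color-$1$ inter-part edges. I expect the bookkeeping that simultaneously achieves the two size targets $\sum_i a_i$ and $\sum_i b_i$ and guarantees the size-$s$ matching to be the most delicate part of the argument.
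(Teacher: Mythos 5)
Your main case ($|V_4\cup\cdots\cup V_k|\geq\sum_i b_i$) is correct and coincides with the paper's first step: $A$-sides into $V_2\cup V_3$ with the $s$ critical edges realized as a $V_2$--$V_3$ matching, $B$-sides into $V_4\cup\cdots\cup V_k$. The genuine gap is in the remaining case $|V_4\cup\cdots\cup V_k|<\sum_i b_i=x$. There your greedy necessarily produces $G_B=V_2\cup V_3$ (since $|V_2|\leq x-1$ by Claim~\ref{cl:critical-V2-2} while $|V_2|+|V_3|\geq x+s\geq \sum_i b_i$), hence $G_A=V_4\cup\cdots\cup V_k$, and you then need a color-$1$ matching of size $s$ \emph{inside} $G_A$. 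That matching is not guaranteed: color $1$ is only promised on edges \emph{between} distinct parts, and $G_A$ may be essentially one part. Indeed $k=4$ is possible, in which case $G_A=V_4$ is a single part whose internal edges may all have color $4$, so no cross-part matching exists at all; more generally one part can dominate $G_A$ (say $|V_5|=\cdots=|V_k|=1$ with $k-4<s$), and the maximum cross-part matching, of size $\min\{\lfloor|G_A|/2\rfloor,\,|G_A|-\max_i|V_i\cap G_A|\}$, falls below $s$. Your fallback claim --- that if no two parts of $G_A$ have size at least $s$ then every part is small relative to $|G_A|$ --- is a non sequitur: excluding two parts of size $\geq s$ does not prevent a single part from containing almost all of $G_A$. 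Configurations with $k=4$ and $|V_2|\approx|V_3|\approx|V_4|$ (possible when $\sum_i b_i$ is much larger than $\sum_i a_i$) satisfy every constraint you derived, so this case is not vacuous.

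The paper closes exactly this case with an idea your rigid $G_A$/$G_B$ framework cannot express: a critical edge need not have both endpoints in the group carrying the $A$-sides. The paper fixes $s$ anchor vertices $u_1,\ldots,u_s\in V_3$ (possible since $|V_3|\geq s+1$) and pairs each $u_i$ with a vertex $v_i$ in a \emph{different} group, so every critical edge straddles two groups and is automatically color $1$. Concretely, it first shows $|V_2|\leq\sum_{i\in[s]}b_i-1$ (otherwise embed with the critical components' $B$-sides in $V_2$ and their critical edges between $V_3$ and $V_4\cup\cdots\cup V_k$); then, combining Lemma~\ref{le:R3}~(iii) applied to $G_1\cup\cdots\cup G_s$ with Inequality~(\ref{eq:critical-1}), it gets $|V_2|>\sum_i a_i$ and $|V_4\cup\cdots\cup V_k|\geq 3\sum_{i\in[s]}a_i+\sum_{i\in[s]}b_i$, and finishes with an embedding whose critical edges go between $V_2$ and $V_3$, with $A$-sides in $V_2$, the critical components' $B$-sides in $V_4\cup\cdots\cup V_k$, and the bipartite components' $B$-sides in $V_3\cup\cdots\cup V_k$. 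Repairing your proof requires abandoning the demand that the size-$s$ matching lie inside one group and adopting this cross-group placement.
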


\begin{proof}
Suppose that $|V_2|+|V_3|\geq x+s\geq \max\left\{\sum\nolimits_{i\in [t]}a_i, \sum\nolimits_{i\in [t]}b_i+s\right\}$.
Then by Claim~\ref{cl:critical-V2-2}, we have $|V_3|\geq x+s-(x-1)\geq s+1$.
Let $u_1, \ldots, u_s\in V_3$ be $s$ distinct vertices.

We claim that $|V_4\cup \cdots \cup V_k|\leq \sum\nolimits_{i\in [t]}b_i-1$, $x=\sum\nolimits_{i\in [t]}b_i$ and $y=\sum\nolimits_{i\in [t]}a_i -s$.
To see this, we first suppose that $|V_4\cup \cdots \cup V_k|\geq \sum\nolimits_{i\in [t]}b_i$.
Then we can choose disjoint subsets $W_1, \ldots, W_t\subseteq V_4\cup \cdots \cup V_k$ with $|W_i|=b_i$ for each $i\in [t]$.
Let $v_1, \ldots, v_s \in V_2$ be $s$ distinct vertices, and let $U_1, \ldots, U_t\subseteq (V_2\cup V_3)\setminus \{u_1, \ldots, u_s, v_1, \ldots, v_s\}$ be $t$ disjoint subsets with $|U_i|=a_i-2$ for $i\in [s]$ and $|U_i|=a_i$ for $i\in [t]\setminus [s]$.
Then for each $i\in [s]$ (resp., $i\in [t]\setminus [s]$), $F[U_i\cup W_i\cup \{u_i, v_i\}]$ (resp., $F[U_i\cup W_i]$) contains a monochromatic $G_i$ of color 1.
This implies that $F$ contains a monochromatic $H$ of color 1, a contradiction.
Hence, $|V_4\cup \cdots \cup V_k|\leq \sum\nolimits_{i\in [t]}b_i-1$.
On the other hand, by Inequalities~(\ref{eq:critical-Ram}), (\ref{eq:critical-x+y}) and Claim~\ref{cl:critical-V2-2}, we have
$$|V_4\cup \cdots \cup V_k|\geq R_3(H)-2|V_2|\geq 2|V(H)|+s-2 -2(x-1)> y.$$
Thus $y<|V_4\cup \cdots \cup V_k|\leq \sum\nolimits_{i\in [t]}b_i-1,$ so $\sum\nolimits_{i\in [t]}b_i =x$ and $\sum\nolimits_{i\in [t]}a_i -s =y<|V_4\cup \cdots \cup V_k|$.

Now we have
\begin{align}\label{eq:critical-1}
|V_2|+|V_3|= &~ R_3(H)-|V_4\cup \cdots \cup V_k| ~\geq 2|V(H)|+s-2 -\left(\sum\nolimits_{i\in [t]}b_i-1\right) \nonumber\\
= &~2\sum\nolimits_{i\in [t]}a_i + \sum\nolimits_{i\in [t]}b_i +s-1.
\end{align}
We next show that $|V_2|\leq \sum\nolimits_{i\in [s]}b_i-1$.
Suppose for a contradiction that $|V_2|\geq \sum\nolimits_{i\in [s]}b_i$.
Recall that $|V_2|+|V_3|\geq x+s \geq \sum\nolimits_{i\in [t]}b_i+s$.
Thus we can choose disjoint subsets $W_1, \ldots, W_s\subseteq V_2$ and $W_{s+1}, \ldots, W_{t}\subseteq (V_2\cup V_3)\setminus (W_1\cup \cdots \cup W_s \cup \{u_1, \ldots, u_s\})$ with $|W_i|=b_i$ for $i\in [t]$.
Since $|V_4\cup \cdots \cup V_k|>\sum\nolimits_{i\in [t]}a_i -s$, we can choose $s$ distinct vertices $v_1, \ldots, v_s \in V_4\cup \cdots \cup V_k$ and $t$ disjoint subsets $U_1, \ldots, U_t\subseteq (V_4\cup \cdots \cup V_k)\setminus \{v_1, \ldots, v_s\}$ with $|U_i|=a_i-2$ for $i\in [s]$ and $|U_i|=a_i$ for $i\in [t]\setminus [s]$.
Then for each $i\in [s]$ (resp., $i\in [t]\setminus [s]$), $F[U_i\cup W_i\cup \{u_i, v_i\}]$ (resp., $F[U_i\cup W_i]$) contains a monochromatic $G_i$ of color 1.
This implies that $F$ contains a monochromatic $H$ of color 1, a contradiction.

Now by Lemma~\ref{le:R3}~(iii), we have
\begin{align*}
|V_4\cup \cdots \cup V_k|= &~R_3(H)-|V_2|-|V_3| ~\geq R_3(G_1\cup \cdots \cup G_s)-2|V_2| \\
\geq &~3\sum\nolimits_{i\in [s]}(a_i + b_i)-2 - 2\left(\sum\nolimits_{i\in [s]}b_i-1\right) \\
= &~3\sum\nolimits_{i\in [s]}a_i + \sum\nolimits_{i\in [s]}b_i.
\end{align*}
Moreover, by Inequality~(\ref{eq:critical-1}), we have $|V_2|\geq \frac{1}{2}(|V_2|+|V_3|)> \sum\nolimits_{i\in [t]}a_i.$
Recall that $|V_3\cup \cdots \cup V_k|\geq |V(H)|=\sum_{i\in [t]}(a_i+b_i)$.
Thus we can choose $s$ distinct vertices $v_1, \ldots, v_s \in V_2$, $t$ disjoint subsets $U_1, \ldots, U_t\subseteq V_2\setminus \{v_1, \ldots, v_s\}$, $s$ disjoint subsets $W_1, \ldots, W_s\subseteq V_4\cup \cdots \cup V_k$ and $t-s$ subsets $W_{s+1}, \ldots, W_t\subseteq (V_3\cup \cdots \cup V_k)\setminus (W_1\cup \cdots \cup W_s \cup \{u_1, \ldots, u_s\})$ with $|U_i|=a_i-2$ for $i\in [s]$, $|U_i|=a_i$ for $i\in [t]\setminus [s]$ and $|W_i|=b_i$ for $i\in [t]$.
Then for each $i\in [s]$ (resp., $i\in [t]\setminus [s]$), $F[U_i\cup W_i\cup \{u_i, v_i\}]$ (resp., $F[U_i\cup W_i]$) contains a monochromatic $G_i$ of color 1.
This implies that $F$ contains a monochromatic $H$ of color 1.
This contradiction completes the proof of Claim~\ref{cl:critical-V2V3}.
\end{proof}

\begin{claim}\label{cl:critical-V2-3}
$|V_2|\leq s-1$.
\end{claim}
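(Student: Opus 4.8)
The plan is to argue by contradiction: assume $|V_2|\geq s$ and derive a monochromatic copy of $H$ in color $1$, contradicting the choice of $F$. The key structural fact I will keep using is that every edge joining two distinct parts has color $1$, so any grouping of $V_2,\dots,V_k$ into three blocks automatically produces a color-$1$ complete tripartite graph on those blocks; embedding $H$ in color $1$ then reduces to distributing the color classes of the components of $H$ among three blocks of suitable sizes. Recall that each $3$-color-critical $G_i$ $(i\in[s])$ has a proper $3$-coloring with class sizes $(a_i-1,b_i,1)$, with the two endpoints of the critical edge lying in the classes of sizes $a_i-1$ and $1$, while each bipartite $G_i$ $(i>s)$ has classes $(a_i,b_i,0)$. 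Summing as in the definitions of $x$ and $y$, this gives $H\subseteq K_{x,y,s}$, the critical edge of each critical component running between the $\geq s$ block and one of the other two blocks. Thus it suffices to split $V_2,\dots,V_k$ into three blocks of sizes at least $x$, at least $y$ and at least $s$.

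The first move is to let $V_2$ serve as the $\geq s$ block, which is legitimate since $|V_2|\geq s$; it then remains to partition the tail $V_3\cup\cdots\cup V_k$ into two blocks of sizes at least $x$ and at least $y$. Here I will exploit the two preceding claims: by Claim~\ref{cl:critical-V2-2} every part has size at most $x-1$, and by Claim~\ref{cl:critical-V2V3} together with $|V_2|\geq s$ we obtain $|V_3|\leq x+s-1-|V_2|\leq x-1$, so every part of the tail is small relative to $x$. Building the $\geq x$ block greedily from the largest available tail parts, this bound on $|V_3|$ controls the overshoot, and the lower bound $R_3(H)\geq 2|V(H)|+s-2=2x+2y+3s-2$ from Inequality~(\ref{eq:critical-Ram}) guarantees that enough vertices remain to furnish the $\geq y$ block. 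In the balanced regime, where $x$ does not greatly exceed $y$, a short computation of these size inequalities closes the argument and yields the desired color-$1$ copy of $H$.

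The main obstacle is the unbalanced regime, in which one of $\sum_i a_i-s$ and $\sum_i b_i$ (namely $x$) greatly exceeds the other and, simultaneously, the parts cluster into a few blocks of intermediate size. In that situation a purely cross-part grouping can leave only a single part to cover both the $\geq y$ block and the $\geq s$ block, which fails because the critical edges must run between these two blocks, hence between distinct parts. To handle this I will recover the critical edges from within-part color-$1$ structure, in the spirit of Case~$1$: each part $V_i$ is internally $2$-colored with colors $1$ and $i$, and since $F$ contains no monochromatic $H$ in color $i$, the absence of a color-$1$ matching of the required size would force a large color-$i$ clique which, combined with property~(iii) (so that a single large part can absorb the large independent color class) and the size bounds, yields a color-$i$ copy of $H$. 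Placing the $s$ critical edges on such a matching reduces the remaining task to embedding the bipartite graph $H-\{u_1,\dots,u_s\}\subseteq K_{x,y}$ across the blocks using only cross-part color-$1$ edges.

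I expect the hardest point to be exactly this interface between the two mechanisms: showing that for every size profile $|V_2|\geq|V_3|\geq\cdots\geq|V_k|$ consistent with Claims~\ref{cl:critical-V2-2} and~\ref{cl:critical-V2V3} and with Inequality~(\ref{eq:critical-Ram}), either a clean three-block grouping into sizes $\geq x,\geq y,\geq s$ exists, or a within-part color-$1$ matching of size $s$ can be extracted to realize the critical edges. This will likely require a case analysis on the magnitude of $|V_2|$ (large $|V_2|$ forcing the remaining parts to be small, small $|V_2|$ forcing all parts to be small) and a careful bookkeeping of the resulting inequalities. Once $|V_2|\geq s$ is shown to force a monochromatic color-$1$ copy of $H$ in all these cases, the claim $|V_2|\leq s-1$ follows.
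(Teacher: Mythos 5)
Your overall plan---assume $|V_2|\geq s$, let $V_2$ serve as the block hosting the $s$ critical-edge endpoints, and split the tail $V_3\cup\cdots\cup V_k$ into two further blocks of sizes at least $x$ and at least $y$, so that the three pairwise color-1-joined blocks contain a color-1 copy of $H$---is sound. The genuine gap is your fallback for the ``unbalanced regime.'' There you propose to recover the $s$ critical edges from a color-1 matching \emph{inside} a part, arguing that its absence forces a monochromatic $H$ in that part's private color. That Ramsey-type step (Corollary~\ref{cor:matching}) requires the part to have at least $R(sK_2,H)=|V(H)|+s-1$ vertices; but we are in Case~2, where $|V_2|\leq|V(H)|-1$, and by Claim~\ref{cl:critical-V2-2} even $|V_2|\leq x-1<|V(H)|$, with every other part at most as large. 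So nothing forces even a single color-1 edge inside any part: each $V_i$ may be colored entirely with its own color $i$, and this produces no monochromatic $H$ because $|V_i|<|V(H)|$. Within-part color-1 structure is available only in Case~1 (where $|V_2|\geq|V(H)|$); in Case~2 it is exactly what the hypothesis denies you, so this mechanism cannot be repaired. Note also that in your own scheme $V_2$ is already reserved as the $\geq s$ block, so after forming the $\geq x$ block only \emph{one} further block is needed; the scenario you fear, of a single part having to serve as both the $\geq y$ and the $\geq s$ block, cannot arise.

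Fortunately the fallback is unnecessary: the unbalanced regime is an artifact of a lossy estimate, and your primary mechanism closes unconditionally. You bounded the overshoot part $V_j$ of the greedy $\geq x$ block by $|V_j|\leq|V_3|\leq x-1$, which discards the term $-|V_2|$ and is what produces the restriction that $x$ not greatly exceed $y$. Keep instead the bound you already wrote down from Claim~\ref{cl:critical-V2V3}: $|V_j|\leq|V_3|\leq x+s-1-|V_2|$. Take $j$ minimal with $|V_3\cup\cdots\cup V_j|\geq x$ (such $j$ exists since the tail has at least $(2x+2y+3s-2)-(x-1)=x+2y+3s-1$ vertices, and $j\geq 4$ since $|V_3|\leq x-1$), and set $B_1=V_3\cup\cdots\cup V_j$, $B_2=V_{j+1}\cup\cdots\cup V_k$. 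Then $|B_1|\leq (x-1)+(x+s-1-|V_2|)$, hence
\[
|B_2|=\bigl(R_3(H)-|V_2|\bigr)-|B_1|\geq \bigl(2x+2y+3s-2-|V_2|\bigr)-\bigl(2x+s-2-|V_2|\bigr)=2y+2s,
\]
the $|V_2|$ terms cancelling. So $B_2$ always has at least $y$ vertices, and the blocks $V_2$, $B_1$, $B_2$, of sizes at least $s$, $x$, $y$ respectively and pairwise joined in color 1, yield a monochromatic $H$ of color 1 with no case distinction. Completed this way, your argument is genuinely different from (and shorter than) the paper's proof, which never uses $V_2$ as the $s$-block: the paper first shows $|V_2|\leq\sum_{i\in[t]}a_i-s-1$ by letting $V_2$ receive the $a$-classes, and then, in the range $s\leq|V_2|\leq\sum_{i\in[t]}a_i-s-1$, takes a prefix $V_2\cup\cdots\cup V_\ell$ of size at least $\sum_{i\in[t]}a_i$, realizes the critical edges as a cross-part color-1 matching between $V_2$ and $V_3\cup\cdots\cup V_\ell$, and embeds the $b$-sides in the remaining parts. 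Both routes rely on the same key point your fallback missed: in Case~2 the critical edges must be cross-part edges, since those are the only edges guaranteed to have color 1.
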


\begin{proof}
Suppose that $|V_2|\geq s.$
We first show that $|V_2|\leq \sum\nolimits_{i\in [t]}a_i-s-1$.
To see this, if $|V_2|\geq \sum\nolimits_{i\in [t]}a_i-s$, then let $j$ be the minimum index with $|V_3\cup \cdots \cup V_j|\geq \sum\nolimits_{i\in [t]}b_i$.
Note that if $j\geq 4$, then $|V_3\cup \cdots \cup V_{j-1}|\leq \sum\nolimits_{i\in [t]}b_i-1.$
By Claim~\ref{cl:critical-V2V3}, we have
\begin{align*}
|V_2\cup \cdots \cup V_j|\leq &~|V_2|+\max\left\{|V_3|, \sum\nolimits_{i\in [t]}b_i-1+|V_j|\right\} \\
\leq &~|V_2|+\sum\nolimits_{i\in [t]}b_i-1+|V_3| ~\leq x+s+\sum\nolimits_{i\in [t]}b_i-2.
\end{align*}
Then
\begin{align*}
|V_{j+1}\cup \cdots \cup V_k|= &~R_3(H)-|V_2\cup \cdots \cup V_j| \\
\geq &~2|V(H)|+s-2- \left(x+s+\sum\nolimits_{i\in [t]}b_i-2\right) ~> s.
\end{align*}
Now there is a monochromatic $K_{x, y, s}$ (and thus a monochromatic $H$) of color 1 between $V_2$, $V_3\cup \cdots \cup V_{j}$ and $V_{j+1}\cup \cdots \cup V_{k}$, a contradiction.
Hence, $|V_2|\leq \sum\nolimits_{i\in [t]}a_i-s-1$.

Let $\ell$ be the minimum index with $|V_2\cup \cdots \cup V_{\ell}|\geq \sum\nolimits_{i\in [t]}a_i$.
Then $\ell\geq 3$, $|V_3\cup \cdots \cup V_{\ell}|\geq s+1$, $|V_2\cup \cdots \cup V_{\ell-1}|\leq \sum\nolimits_{i\in [t]}a_i-1$.
Combining with Claim~\ref{cl:critical-V2-2}, we have
\begin{align*}
|V_{\ell+1}\cup \cdots \cup V_k|= &~R_3(H)-|V_2\cup \cdots \cup V_{\ell-1}|-|V_{\ell}| ~\geq R_3(H)-|V_2\cup \cdots \cup V_{\ell-1}|-|V_2| \\
\geq &~2|V(H)|+s-2- \left(\sum\nolimits_{i\in [t]}a_i-1\right)- (x-1) ~> \sum\nolimits_{i\in [t]}b_i.
\end{align*}
Let $M=\{u_1v_1, \ldots, u_sv_s\}$ be a monochromatic matching with $u_1, \ldots u_s\in V_2$ and $v_1, \ldots, v_s\in V_3\cup \cdots \cup V_{\ell}$.
Let $U_1, \ldots, U_t\subseteq (V_2\cup \cdots \cup V_{\ell})\setminus V(M)$ and $W_1, \ldots, W_t\subseteq V_{\ell+1}\cup \cdots \cup V_k$ be disjoint subsets with $|U_i|=a_i-2$ for $i\in [s]$, $|U_i|=a_i$ for $i\in [t]\setminus [s]$ and $|W_i|=b_i$ for $i\in [t]$.
Then for each $i\in [s]$ (resp., $i\in [t]\setminus [s]$), $F[U_i\cup W_i\cup \{u_i, v_i\}]$ (resp., $F[U_i\cup W_i]$) contains a monochromatic $G_i$ of color 1.
This implies that $F$ contains a monochromatic $H$ of color 1.
This contradiction completes the proof of Claim~\ref{cl:critical-V2-3}.
\end{proof}

Let $j_1$ be the minimum index with $|V_2\cup \cdots \cup V_{j_1}|\geq x$.
Combining with Claim~\ref{cl:critical-V2-3}, we have $j_1\geq 3$, $|V_2\cup \cdots \cup V_{j_1-1}|\leq x-1$ and $|V_{j_1}|\leq |V_2|\leq s-1$.
Let $j_2$ be the minimum index with $|V_{j_1+1}\cup \cdots \cup V_{j_2}|\geq y$.
Combining with Claim~\ref{cl:critical-V2-3}, we have $j_2\geq j_1+2$, $|V_{j_1+1}\cup \cdots \cup V_{j_2-1}|\leq y-1$ and $|V_{j_2}|\leq |V_2|\leq s-1$.
Then by Inequalities~(\ref{eq:critical-Ram}) and (\ref{eq:critical-x+y}), we have
\begin{align*}
|V_{j_2+1}\cup \cdots \cup V_k|= &~R_3(H)-|V_2\cup \cdots \cup V_{j_1-1}|-|V_{j_1}|- |V_{j_1+1}\cup \cdots \cup V_{j_2-1}|-|V_{j_2}| \\
\geq &~2|V(H)|+s-2- (x-1)-(s-1)- (y-1)-(s-1) ~\geq s.
\end{align*}
Then there is a monochromatic $K_{x, y, s}$ (and thus a monochromatic $H$) of color 1 between $V_2\cup \cdots \cup V_{j_1}$, $V_{j_1+1}\cup \cdots \cup V_{j_2}$ and $V_{j_2+1}\cup \cdots \cup V_k$, a contradiction.
This completes the proof of Theorem~\ref{th:critical}.
%\end{proof}
\hfill$\square$
\vspace{0.2cm}

\vspace{0.3cm}
\noindent{\bf Proof of Theorem~\ref{th:balanced}.}~
%\begin{proof}[Proof of Theorem~\ref{th:balanced}]
Let $H\colonequals G_1\cup G_2\cup \cdots \cup G_t$.
By Inequality~(\ref{eq:lower bound}), it suffices to show that $f(H,P_5)\leq R_{3}(H)$.
For a contradiction, suppose that $F$ is an edge-colored $K_{R_3(H)}$ using exactly $k$ colors such that it contains no rainbow $P_5$ or monochromatic $H$.
By Lemma~\ref{le:Caseb}, we can partition $V(F)$ into $k-1$ parts $V_2, V_3, \ldots, V_{k}$ satisfying
\begin{itemize}
\item[{\rm (i)}] $\{i\}\subseteq C(V_i)\subseteq \{1,i\}$ for every $i\in \{2, 3, \ldots, k\}$, and
\item[{\rm (ii)}] $c(V_i, V_j)=1$ for every $2\leq i<j\leq k$.
\end{itemize}
Without loss of generality, we may assume that $|V_2|=\max_{2\leq i\leq k}|V_i|$.

Let $F'$ be a $3$-edge-coloring of $K_n$ obtained from $F$ by recoloring all edges of colors in $[k]\setminus [3]$ with color $3$.
Since $n=R_{3}(H)$, there is a monochromatic copy of $H$ in $F'$.
Denote such a monochromatic $H$ by $A$.
Since $F$ contains no monochromatic copy of $H$, the color of $A$ must be $3$ and the components of $A$ must be contained in at least two distinct parts of $V_{3}, V_{4}, \ldots, V_{k}$.
Without loss of generality, let $V_3, \ldots, V_{\ell}$ be the parts containing components of $A$ for some $4\leq \ell\leq k$, and assume that $|V_3|\geq \cdots \geq |V_{\ell}|$.
For each $i\in \{3, \ldots, \ell\}$, let $x_i$ be the number of components of $A$ contained in $V_i$, and let $G_{i_1}, \ldots, G_{i_{x_i}}$ be these components.
Then $\sum^{\ell}_{i=3}x_i=t$ and $|V_2|\geq |V_i|\geq \sum_{j=1}^{x_i}|V(G_{i_j})|= \sum_{j=1}^{x_i}(a_{i_j}+b_{i_j}+c_{i_j})$ for each $i\in \{3, \ldots, \ell\}$.

By the assumptions of the theorem, we have $a_i\geq b_i$ and $b_i+c_i\geq \sum_{j\in [t]\setminus \{i\}} c_j$ for all $i\in [t]$.
Then
\begin{align*}
|V_2|\geq  &~|V_3| ~\geq \sum\nolimits_{j=1}^{x_3}(a_{3_j}+b_{3_j}+c_{3_j}) ~\geq a_{3_1}+c_{3_1}+\sum\nolimits_{j=1}^{x_3}b_{3_j} ~\geq b_{3_1}+c_{3_1}+\sum\nolimits_{j=1}^{x_3}b_{3_j}\\
\geq &~\sum\nolimits_{j\in [t]\setminus \{3_1\}} c_j+\sum\nolimits_{j=1}^{x_3}b_{3_j} ~\geq \sum\nolimits_{i=4}^{\ell}\sum\nolimits_{j=1}^{x_i}c_{i_j}+\sum\nolimits_{j=1}^{x_3}b_{3_j}.
\end{align*}
Thus we can choose disjoint subsets $V_{2,2}, U_4, \ldots, U_{\ell}$ of $V_2$ with $|V_{2,2}|=\sum\nolimits_{j=1}^{x_3}b_{3_j}$ and $|U_i|=\sum\nolimits_{j=1}^{x_i}c_{i_j}$ for each $i\in \{4, \ldots, \ell\}$.
Moreover, by the assumptions of the theorem, we have $b_i\leq a_i\leq \frac{1}{2}(a_i+b_i+c_i)=\frac{1}{2}|V(G_i)|$ for all $i\in [t]$.
Combining with $|V_3|\geq \cdots \geq |V_{\ell}|$, we can deduce that for $i\in \{3, \ldots, \ell-1\}$,
\begin{align*}
\sum\nolimits_{j=1}^{x_i}a_{i_j}+\sum\nolimits_{j=1}^{x_{i+1}}b_{(i+1)_j} \leq &~\frac{1}{2}\sum\nolimits_{j=1}^{x_i}|V(G_{i_j})|+\frac{1}{2}\sum\nolimits_{j=1}^{x_{i+1}}|V(G_{(i+1)_j})| \\
\leq &~\frac{1}{2}|V_i|+\frac{1}{2}|V_{i+1}| ~\leq \frac{1}{2}|V_i|+\frac{1}{2}|V_{i}| ~= |V_i|.
\end{align*}
Thus for each $i\in \{3, \ldots, \ell-1\}$, we can choose disjoint subsets $V_{i,1}, V_{i,2}$ of $V_i$ with $|V_{i,1}|=\sum\nolimits_{j=1}^{x_i}a_{i_j}$ and $|V_{i,2}|=\sum\nolimits_{j=1}^{x_{i+1}}b_{(i+1)_j}$.
Furthermore, since $b_i+c_i\geq \sum_{j\in [t]\setminus \{i\}} c_j$ for all $i\in [t]$, we have
\begin{align*}
|V_{\ell}|\geq  &~\sum\nolimits_{j=1}^{x_{\ell}}(a_{\ell_j}+b_{\ell_j}+c_{\ell_j}) ~\geq \left(\sum\nolimits_{j=1}^{x_{\ell}}a_{\ell_j}\right)+(b_{\ell_1}+c_{\ell_1}) \\
\geq &~\sum\nolimits_{j=1}^{x_{\ell}}a_{\ell_j}+\sum\nolimits_{j\in [t]\setminus \{\ell_1\}} c_j ~\geq \sum\nolimits_{j=1}^{x_{\ell}}a_{\ell_j}+\sum\nolimits_{j=1}^{x_3}c_{3_j}.
\end{align*}
Thus we can choose disjoint subsets $V_{\ell,1}, U_3$ of $V_{\ell}$ with $|V_{\ell,1}|=\sum\nolimits_{j=1}^{x_{\ell}}a_{\ell_j}$ and $|U_3|=\sum\nolimits_{j=1}^{x_3}c_{3_j}$.
Now for each $i\in \{3, \ldots, \ell\}$, $F[V_{i,1}\cup V_{i-1,2}\cup U_i]$ contains a monochromatic $G_{i_1}\cup \cdots \cup G_{i_{x_i}}$ of color 1.
Thus $F$ contains a monochromatic copy of $H$, a contradiction.
This completes the proof of Theorem~\ref{th:balanced}.
%\end{proof}
\hfill$\square$
\vspace{0.2cm}

\vspace{0.3cm}
\noindent{\bf Proof of Theorem~\ref{th:one3}.}~
%\begin{proof}[Proof of Theorem~\ref{th:one3}]
Let $H_1, H_2, \ldots, H_t$ be the components of $H$ with $\chi(H_1)=3$, $\chi(H_2)=\cdots =\chi(H_t)=2$ and $\min_{i\in [t]}|V(H_i)|\geq \sigma(H)=\sigma(H_1)$.
%Let $H'=H_2\cup \cdots \cup H_t$.
Let $s_1, s_2, s_3$ (resp., $t_1, t_2$) be the sizes of color classes in a proper 3-vertex-coloring of $H_1$ (resp., a proper 2-vertex-coloring of $H_2\cup \cdots \cup H_t$) with $s_1\geq s_2\geq s_3=\sigma(H_1)$ (resp., $t_1\geq t_2$).

By Inequality~(\ref{eq:lower bound}), it suffices to show that $f(H,P_5)\leq R_{3}(H)$.
Suppose, for the sake of contradiction, that there exists an edge-coloring $F$ of $K_{R_3(H)}$ without rainbow $P_5$ or monochromatic $H$.
By Lemma~\ref{le:Caseb+}, we can partition $V(F)$ into $k-1$ parts $V_2, V_3, \ldots, V_{k}$ for some $k\geq 4$ satisfying
\begin{itemize}
\item[{\rm (i)}] $\{i\}\subseteq C(V_i)\subseteq \{1,i\}$ for every $i\in \{2, 3, \ldots, k\}$,
\item[{\rm (ii)}] $c(V_i, V_j)=1$ for every $2\leq i<j\leq k$, and
\item[{\rm (iii)}] $|V_2|\geq |V_3|\geq \cdots \geq |V_k|$, $|V_3|\geq \max_{i\in [t]}|V(H_i)|\geq s_1+s_2+s_3$, $|V_4|\geq \min_{i\in [t]}|V(H_i)|\geq \sigma(H_1) = s_3$ and $|V_3\cup \cdots \cup V_k|\geq |V(H)|$.
\end{itemize}
%and in addition, we have
%$|V_2|\geq |V_3|\geq \cdots \geq |V_{k}|,$
%$$|V_2|\geq |V_3|\geq \max\nolimits_{i\in [t]}|V(H_i)|\geq s_1+s_2+s_3$$
%and
%$$|V_4\cup \cdots \cup V_k|\geq |V_4|\geq \min\nolimits_{i\in [t]}|V(H_i)|\geq \sigma(H_1) = s_3.$$

Let $X\subseteq V_2$, $Y\subseteq V_3$ and $Z\subseteq V_4\cup \cdots \cup V_k$ with $|X|=s_2$, $|Y|=s_1$ and $|Z|=s_3$.
Then the edges between $X$, $Y$, $Z$ form a monochromatic $K_{s_1, s_2, s_3}$ of color 1 that contains $H_1$ as a subgraph.
Let $F'=F-(X\cup Y\cup Z)$.
If $F'$ contains a monochromatic $K_{t_1, t_2}$ of color 1, then $F$ contains a monochromatic $H$, a contradiction.
Hence, we may assume that the following statement holds.

\begin{fact}\label{fa:one3}
$F'$ contains no monochromatic $K_{t_1, t_2}$ of color 1.
\end{fact}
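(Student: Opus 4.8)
The plan is to establish Fact~\ref{fa:one3} by contradiction, recycling the monochromatic copy of $H_1$ already pinned down on $X\cup Y\cup Z$. First I would assume, for contradiction, that $F'$ contains a monochromatic $K_{t_1,t_2}$ of color $1$, and name this subgraph $B$. Because $F'=F-(X\cup Y\cup Z)$, the vertices of $B$ avoid $X\cup Y\cup Z$ entirely, so $B$ is vertex-disjoint from the color-$1$ copy of $K_{s_1,s_2,s_3}$ supported on $X\cup Y\cup Z$, and both subgraphs carry color $1$ (the $K_{s_1,s_2,s_3}$ being color $1$ precisely because its edges run between distinct parts $V_2,V_3,V_4\cup\cdots\cup V_k$, which is color $1$ by property~(ii) of Lemma~\ref{le:Caseb+}).

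The next step is to recognize that these two disjoint color-$1$ blocks already contain all of $H$. Since $H_1$ has a proper $3$-vertex-coloring with class sizes $s_1\geq s_2\geq s_3$, we have $H_1\subseteq K_{s_1,s_2,s_3}$, and this $K_{s_1,s_2,s_3}$ is exactly the color-$1$ graph formed by the edges among $X$, $Y$, $Z$. On the other side, the components $H_2,\ldots,H_t$ are bipartite, and by definition $t_1\geq t_2$ are the sizes of the two color classes of the union $H_2\cup\cdots\cup H_t$; hence $H_2\cup\cdots\cup H_t\subseteq K_{t_1,t_2}=B$. As $B$ and the $K_{s_1,s_2,s_3}$ are vertex-disjoint and both monochromatic in color $1$, their union is a monochromatic copy of $H=H_1\cup H_2\cup\cdots\cup H_t$ of color $1$ inside $F$.

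This contradicts the standing hypothesis that $F$ has no monochromatic $H$, so $F'$ cannot contain a monochromatic $K_{t_1,t_2}$ of color $1$, which is exactly the assertion of Fact~\ref{fa:one3}. I do not expect a genuine obstacle here: the argument is essentially a packaging step, and the only points requiring care are the two embeddings ($H_1\hookrightarrow K_{s_1,s_2,s_3}$ and $H_2\cup\cdots\cup H_t\hookrightarrow K_{t_1,t_2}$) together with the vertex-disjointness of the two blocks. The single place where I would slow down is confirming that $t_1,t_2$ are taken as the color classes of the \emph{whole} bipartite part $H_2\cup\cdots\cup H_t$ at once---this is what makes $K_{t_1,t_2}$ large enough to host every bipartite component simultaneously, rather than forcing an accommodation component by component.
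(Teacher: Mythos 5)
Your proposal is correct and follows exactly the paper's own (one-line) justification: the paper also obtains Fact~\ref{fa:one3} by observing that a color-$1$ copy of $K_{t_1,t_2}$ in $F'$ would be vertex-disjoint from the color-$1$ copy of $K_{s_1,s_2,s_3}$ on $X\cup Y\cup Z$ containing $H_1$, and since $H_2\cup\cdots\cup H_t\subseteq K_{t_1,t_2}$, this would yield a monochromatic $H$ in $F$, a contradiction. Your added care about the color-$1$ guarantee via Lemma~\ref{le:Caseb+}~(ii) and about $t_1,t_2$ being the class sizes of the entire bipartite part $H_2\cup\cdots\cup H_t$ is exactly the right bookkeeping, but it is the same argument.
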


We next state and prove two claims related to the sizes of $V_2$ and $V_3$.

\begin{claim}\label{cl:one3-1}
$|V_2|\leq s_2+t_2-1.$
\end{claim}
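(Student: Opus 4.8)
The plan is to argue by contradiction: suppose instead that $|V_2|\geq s_2+t_2$, and then assemble a monochromatic $K_{t_1,t_2}$ of color $1$ lying entirely inside $F'$, which immediately contradicts Fact~\ref{fa:one3}. The guiding observation is that, by property~(ii) of the partition, every edge joining two distinct parts receives color~$1$. Consequently a complete bipartite graph of color~$1$ can be produced with no control over edge colors whatsoever, simply by placing one side of it inside $V_2$ and the other side inside $V_3\cup\cdots\cup V_k$; the only thing left to arrange is that both sides are large enough and that they avoid the already-removed set $X\cup Y\cup Z$.

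First I would locate the small side. Since $X\subseteq V_2$ with $|X|=s_2$, the assumption $|V_2|\geq s_2+t_2$ yields $|V_2\setminus X|\geq t_2$, so I fix a set $B\subseteq V_2\setminus X$ with $|B|=t_2$. Next I would locate the large side inside $(V_3\cup\cdots\cup V_k)\setminus(Y\cup Z)$, and here the only computation is a counting bound. Using $|V_3\cup\cdots\cup V_k|\geq|V(H)|$ from property~(iii), together with $|V(H)|=(s_1+s_2+s_3)+(t_1+t_2)$ and the fact that $Y\subseteq V_3$ and $Z\subseteq V_4\cup\cdots\cup V_k$ are disjoint (so $|Y\cup Z|=s_1+s_3$), one obtains
\[
\bigl|(V_3\cup\cdots\cup V_k)\setminus(Y\cup Z)\bigr|\;\geq\; s_2+t_1+t_2\;\geq\; t_1,
\]
which lets me fix a set $A$ of size $t_1$ in that region.

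Finally, because $B\subseteq V_2$ and $A\subseteq V_3\cup\cdots\cup V_k$ lie in distinct parts, property~(ii) forces every edge between $A$ and $B$ to have color~$1$, so $F[A\cup B]$ is a monochromatic $K_{t_1,t_2}$ of color~$1$. Since $A\cup B$ is disjoint from $X\cup Y\cup Z$ by construction, this copy lives in $F'$, contradicting Fact~\ref{fa:one3}; hence $|V_2|\leq s_2+t_2-1$. I do not anticipate a genuine obstacle in this argument: the two sides are drawn from different parts, so all crossing edges are automatically color~$1$, and the disjointness from $X\cup Y\cup Z$ is built into the choices of $A$ and $B$. The only point demanding a little care is the counting step ensuring the large side still fits after $X$, $Y$, $Z$ are deleted, and this is exactly what the displayed inequality settles, using that the leftover slack is precisely $s_2+t_2\geq 0$.
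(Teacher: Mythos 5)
Your proof is correct and follows essentially the same route as the paper's: choose $t_2$ vertices in $V_2\setminus X$, note that $(V_3\cup\cdots\cup V_k)\setminus(Y\cup Z)$ still has at least $|V(H)|-s_1-s_3\geq t_1$ vertices, and use property~(ii) to get a color-1 copy of $K_{t_1,t_2}$ inside $F'$, contradicting Fact~\ref{fa:one3}. The only difference is cosmetic: you spell out the counting bound $s_2+t_1+t_2\geq t_1$ that the paper states in one line.
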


\begin{proof}
Suppose that $|V_2|\geq s_2+t_2.$
Then we can choose $U\subseteq V_2\setminus X$ with $|U|=t_2$.
Since $|V_3\cup \cdots \cup V_k|-|Y\cup Z|\geq |V(H)|-s_1-s_3>t_1$, the edges between $U$ and $(V_3\cup \cdots \cup V_k)\setminus (Y\cup Z)$ form a monochromatic subgraph of color 1 that contains $K_{t_1, t_2}$ as a subgraph.
This contradicts Fact~\ref{fa:one3}.
\end{proof}

\begin{claim}\label{cl:one3-2}
$|V_2|+|V_3|\leq s_1+s_2+t_2-1.$
\end{claim}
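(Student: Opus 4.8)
The plan is to mirror the argument used for Claim~\ref{cl:one3-1} and to reach a contradiction with Fact~\ref{fa:one3} by exhibiting a color-$1$ copy of $K_{t_1,t_2}$ inside $F'$. I would assume, for contradiction, that $|V_2|+|V_3|\ge s_1+s_2+t_2$. Since $X\subseteq V_2$ and $Y\subseteq V_3$ with $|X|=s_2$ and $|Y|=s_1$, deleting them leaves
$$|(V_2\cup V_3)\setminus(X\cup Y)|=|V_2|+|V_3|-s_1-s_2\ge t_2$$
vertices, which will serve as the smaller side of the desired $K_{t_1,t_2}$. The larger side, of size $t_1$, will be taken from $(V_4\cup\cdots\cup V_k)\setminus Z$. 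The point is that every vertex of the first side lies in $V_2\cup V_3$ while every vertex of the second lies in $V_4\cup\cdots\cup V_k$, so condition~(ii) forces all edges between the two sides to have color $1$; hence they span a color-$1$ copy of $K_{t_1,t_2}$ in $F'$, contradicting Fact~\ref{fa:one3}.

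The crux is to verify that $(V_4\cup\cdots\cup V_k)\setminus Z$ actually contains at least $t_1$ vertices. Here I would invoke the already established Claim~\ref{cl:one3-1} together with the ordering in~(iii): these give $|V_3|\le|V_2|\le s_2+t_2-1$, whence $|V_2|+|V_3|\le 2(s_2+t_2-1)$. Combining this with the lower bound $R_3(H)\ge 2|V(H)|+s_3-2$ from Lemma~\ref{le:R3}~(iv) (note that $\sum_{i}\sigma_3(H_i)=\sigma(H_1)=s_3$, since $H_1$ is the only $3$-chromatic component) and with $|V(H)|=s_1+s_2+s_3+t_1+t_2$, I obtain
$$|V_4\cup\cdots\cup V_k|=R_3(H)-|V_2|-|V_3|\ge 2s_1+3s_3+2t_1,$$
so that $|(V_4\cup\cdots\cup V_k)\setminus Z|\ge 2s_1+2s_3+2t_1\ge t_1$, exactly as needed.

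I expect the main obstacle to be precisely this size check. Unlike Claim~\ref{cl:one3-1}, where the smaller side already fits entirely inside $V_2\setminus X$, here the two candidate sides straddle several parts, so one must ensure that removing $X$, $Y$, and especially $Z$ does not exhaust the parts $V_4,\ldots,V_k$. The upper bound on $|V_2|+|V_3|$ supplied by Claim~\ref{cl:one3-1} is what keeps $|V_4\cup\cdots\cup V_k|$ from becoming too small, and this is the step where the lower bound on $R_3(H)$ is genuinely used; the remaining manipulations are routine arithmetic in the parameters $s_1,s_2,s_3,t_1,t_2$.
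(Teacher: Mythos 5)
Your proof is correct and follows essentially the same route as the paper's: assume $|V_2|+|V_3|\geq s_1+s_2+t_2$, take a $t_2$-set inside $(V_2\cup V_3)\setminus(X\cup Y)$, and use Claim~\ref{cl:one3-1} (via $|V_3|\leq |V_2|$) together with Lemma~\ref{le:R3}~(iv) to show that $(V_4\cup\cdots\cup V_k)\setminus Z$ still has more than $t_1$ vertices, yielding a color-$1$ copy of $K_{t_1,t_2}$ in $F'$ that contradicts Fact~\ref{fa:one3}. The only difference is cosmetic: you subtract $|Z|=s_3$ at the end of the size computation, whereas the paper subtracts it at the start.
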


\begin{proof}
Suppose that $|V_2|+|V_3|\geq s_1+s_2+t_2.$
Then we can choose $U\subseteq (V_2\cup V_3)\setminus (X\cup Y)$ with $|U|=t_2$.
By Lemma~\ref{le:R3}~(iv) and Claim~\ref{cl:one3-1}, we have
\begin{align*}
|V_4\cup \cdots \cup V_k|-|Z|= &~R_3(H)-|V_2|-|V_3|-s_3 ~\geq R_3(H)-2|V_2|-s_3\\
\geq &~2|V(H)|+\sigma(H)-2-2(s_2+t_2-1)-s_3 \\
= &~2(s_1+s_2+s_3+t_1+t_2)+s_3-2-2(s_2+t_2-1)-s_3 \\
= &~2(s_1+s_3+t_1) ~> t_1.
\end{align*}
Then the edges between $U$ and $(V_4\cup \cdots \cup V_k)\setminus Z$ form a monochromatic subgraph of color 1 that contains $K_{t_1, t_2}$ as a subgraph.
This contradicts Fact~\ref{fa:one3}.
\end{proof}

By Lemma~\ref{le:R3}~(iv) and Claim~\ref{cl:one3-2}, we have
\begin{align}\label{eq:one3-1}
|V_4\cup \cdots \cup V_k|= &~R_3(H)-|V_2|-|V_3| \nonumber\\
\geq &~2|V(H)|+\sigma(H)-2-(s_1+s_2+t_2-1) \nonumber\\
= &~2(s_1+s_2+s_3+t_1+t_2)+s_3-2-(s_1+s_2+t_2-1) \nonumber\\
= &~s_1+s_2+3s_3+2t_1+t_2-1.
\end{align}
In particular, we have $|V_4\cup \cdots \cup V_k|> |V_2|$ by Claim~\ref{cl:one3-1}.
Thus $k\geq 5$ since $|V_2|\geq |V_3|\geq \cdots \geq |V_k|$.
Choose a subset $I\subseteq \{4, 5, \ldots, k\}$ and correspondingly define $A_1=\bigcup_{i\in I}V_i$ and $A_{2}=(V_4\cup \cdots \cup V_k)\setminus A_1$ such that the following conditions hold:
\begin{itemize}
\item[(1)] $|A_1|\geq |A_{2}|$,
\item[(2)] $|A_1|-|A_2|$ is smallest subject to (1).
\end{itemize}
By Inequality~(\ref{eq:one3-1}) and Condition~(1), we have
$$|A_1|\geq \frac{1}{2}|V_4\cup \cdots \cup V_k|\geq \frac{1}{2}(s_1+s_2+3s_3+2t_1+t_2-1).$$
In particular, we have $|A_1|> s_3+t_1>s_3.$
Hence, we may assume that the subset $Z$ of $V_4\cup \cdots \cup V_k$ is contained in $A_1$.
Let $U\subset A_1\setminus Z$ with $|U|=t_1$.

If $|A_2|\geq t_2$, then the edges between $U$ and $A_2$ form a monochromatic subgraph of color 1 that contains $K_{t_1, t_2}$ as a subgraph.
This contradicts Fact~\ref{fa:one3}.
Therefore, we may assume that $|A_2|< t_2.$
Combining with $t_2\leq t_1$, Inequality~(\ref{eq:one3-1}) and Claim~\ref{cl:one3-1}, we have
$$|A_2|< t_2<\frac{1}{3}|V_4\cup \cdots \cup V_k|=\frac{1}{3}(|A_1|+|A_2|)$$
and
$$|A_1|=|V_4\cup \cdots \cup V_k|-|A_2|>(s_1+s_2+3s_3+2t_1+t_2-1)-t_2>s_2+t_2>|V_2|,$$
so $|A_2|< \frac{1}{2}|A_1|$ and $|I|\geq 2$.
Let $j\in I$ with $|V_j|=\min_{i\in I}|V_i|,$ so $|V_j|\leq \frac{1}{2}|A_1|.$
If $|A_1\setminus V_j|\geq |A_2\cup V_j|$, then $I'=I\setminus \{j\}$ is a better choice than $I$ by Condition~(2).
Thus $|A_1\setminus V_j|< |A_2\cup V_j|$.
Let $I^{\ast}=(\{4, 5, \ldots, k\}\setminus I)\cup \{j\}$, $A^{\ast}_1=\bigcup_{i\in I^{\ast}}V_i$ and $A^{\ast}_{2}=(V_4\cup \cdots \cup V_k)\setminus A^{\ast}_1$.
Then $|A^{\ast}_1|=|A_2\cup V_j|>|A_1\setminus V_j|=|A^{\ast}_2|$ and
\begin{align*}
&~(|A_1|-|A_2|)-(|A^{\ast}_1|-|A^{\ast}_2|) \\
= &~(|A_1|-|A_2|)-(|A_2|+|V_j|-(|A_1|-|V_j|)) \\
= &~2(|A_1|-|A_2|-|V_j|) ~>2\left(|A_1|-\frac{1}{2}|A_1|-\frac{1}{2}|A_1|\right) ~=0,
\end{align*}
contradicting the choice of $I$.
This completes the proof of Theorem~\ref{th:one3}.
%\end{proof}
\hfill$\square$
%\vspace{0.2cm}

\section{Concluding remarks}
\label{sec:conclu}

In this paper, we address Conjecture~\ref{conj:P5} for multiple classes of disconnected graphs with chromatic number at least 3.
Our newly established general results encompass all known results on this problem as shown in Section~\ref{sec:introduction}.
In the following, we make some remarks to illustrate the challenges in solving Conjecture 1.2 completely and present several related results and open problems.
In particular, we will prove several results for a bipartite variation of the constrained Ramsey number.
\vspace{0.2cm}

\noindent {\bf (1)}~
We first illustrate the challenges in solving Conjecture 1.2.
Firstly, in our proofs of Theorems~\ref{th:union-1} and \ref{th:union-2}, we utilized Lemma~\ref{le:CH} which states that if $R_3(H)\geq R_2(\mathscr{C}(H))$, then $f(H, P_5)=R_3(H)$.
This lemma provides a powerful sufficient condition to assure $f(H, P_5)=R_3(H)$.
However, there exist graphs $H$ such that $R_3(H)< R_2(\mathscr{C}(H))$.
For instance, Lorimer and Segedin~\cite{LoSe} proved that $R_3(nK_r)\leq 3(n-1)r+R_3(K_r)$, while Roberts~\cite{Rob} proved that $R_2(\mathscr{C}(nK_r))=(r^2-r+1)n-r+1$ for $n\geq R_2(K_r)$.
Thus $R_3(nK_r)< R_2(\mathscr{C}(nK_r))$ when $r\geq 4$ and $n\geq \max\left\{R_2(K_r), \frac{R_3(K_r)-2r}{r^2-4r+1}\right\}.$
Therefore, if Conjecture~\ref{conj:P5} is true, then the condition $R_3(H)\geq R_2(\mathscr{C}(H))$ cannot be a necessary condition for $f(H, P_5)=R_3(H)$.
In order to address Conjecture~\ref{conj:P5} for graphs $H$ with $R_3(H)< R_2(\mathscr{C}(H))$, some novel ideas are called for.

Secondly, in our proofs of Theorems~\ref{th:chro}, \ref{th:homology}, \ref{th:critical}, \ref{th:balanced} and \ref{th:one3}, we applied Lemma~\ref{le:Caseb} or \ref{le:Caseb+} which characterize the possible structure of an edge-colored $K_{R_3(H)}$ without a rainbow $P_5$ or a monochromatic $H$.
As in our proofs, upon employing this structural result, we frequently utilize lower bounds of the Ramsey number $R_3(H)$ to derive an upper or lower bound on the sizes of $V_2, V_3, \ldots, V_k$.
The main challenge in this procedure lies in the fact that we currently lack an efficient lower bound on the Ramsey number $R_3(H)$.
As research on $R_3(H)$ advances, we believe that more results related to Conjecture~\ref{conj:P5} will become attainable.

Thirdly, for graphs $H$ with $\chi(H)\geq 4$, the problem becomes more difficult.
When we employ the partition $V_2, V_3, \ldots, V_k$ guaranteed by Lemma~\ref{le:Caseb} or \ref{le:Caseb+}, we commonly find a monochromatic copy of $H$ of color 1 consisting of certain edges between the parts.
If $k=4$, it is impossible to find a copy of $H$ (if $\chi(H)\geq 4$) using edges between $V_2$, $V_3$ and $V_4$.
To acquire a monochromatic copy of $H$, we must conduct an in-depth study of the structure within each part.
However, this needs a good understanding of the relationship between $R_3(H)$ and 2-colored Ramsey numbers $R(H, H')$ for $H$ and its subgraphs $H'$.
We believe that the relationship between 3-colored Ramsey numbers and 2-colored Ramsey numbers will very fertile for future research.
Further research in this area could be fruitful.
\vspace{0.2cm}

\noindent {\bf (2)}~
As the first step towards solving Conjecture~\ref{conj:P5}, we propose the following foundational problem.

\begin{problem}\label{prob:G1G2}
Given two connected graphs $G_1,G_2$ with $\chi(G_1)\leq \chi(G_2)= 3$, determine whether $f(G_1\cup G_2,P_5)=R_3(G_1\cup G_2)$.
\end{problem}
%
%Theorem~\ref{th:union-1} implies that if $G_1\subseteq G_2$, then $f(G_1\cup G_2,P_5)=R_3(G_1\cup G_2)$.
%Theorem~\ref{th:one3} implies that if $\chi(G_1)=2$ and $|V(G_1)|\geq \sigma(G_2)$, then $f(G_1\cup G_2,P_5)=R_3(G_1\cup G_2)$.
%In light of this result, we propose the following weaker version of Problem~\ref{prob:G1G2}.
%
%\begin{problem}\label{prob:G1G2+}
%Given two connected graphs $G_1,G_2$ with $\chi(G_1)=\chi(G_2)= 3$ and $\min\{|V(G_1)|,$ $|V(G_2)|\}\geq \sigma(G_1)+\sigma(G_2)$, determine whether $f(G_1\cup G_2,P_5)=R_3(G_1\cup G_2)$.
%\end{problem}
%
%We can provide several conditions which, imposed on $G_1$ and $G_2$, ensure a positive answer to Problem~\ref{prob:G1G2+}; however, solving the problem in general remains challenging.
%
%\begin{proposition}\label{prop:G1G2+}
%Let $G_1,G_2$ be two connected graphs with $\chi(G_1)=\chi(G_2)= 3$ and $\min\{|V(G_1)|,$ $|V(G_2)|\}\geq \sigma(G_1)+\sigma(G_2)$.
%If one of the following statements holds, then $f(G_1\cup G_2,P_5)=R_3(G_1\cup G_2)$:
%\begin{itemize}
%\item[{\rm (i)}]
%\item[{\rm (ii)}]
%\item[{\rm (iii)}]
%\item[{\rm (iv)}]
%\end{itemize}
%\end{proposition}
%
We can prove the following partial result; however, solving the problem in general remains challenging.

\begin{proposition}\label{prop:G1G2+}
Let $G_1,G_2$ be two connected graphs with $\chi(G_1)\leq \chi(G_2)= 3$ and $\min\{|V(G_1)|,$ $|V(G_2)|\}\geq \sigma_3(G_1)+\sigma_3(G_2)$.
Then $f(G_1\cup G_2,P_5)=R_3(G_1\cup G_2)$
\end{proposition}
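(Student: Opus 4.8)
The plan is to reduce to the case $\chi(G_1)=\chi(G_2)=3$ and then, in that case, to exhibit a monochromatic $H:=G_1\cup G_2$ in colour~$1$ by feeding the structural partition of Lemma~\ref{le:Caseb+} into the tripartite embedding of Observation~\ref{obs:embedding}. For the reduction, suppose first that $\chi(G_1)\le 2$. Then $H$ has exactly one component of chromatic number $3$, namely $G_2$, so condition~(i) of Theorem~\ref{th:one3} holds. Colouring $G_2$ with an optimal proper $3$-colouring (smallest class of size $\sigma(G_2)$) and colouring the bipartite $G_1$ with the first two colours shows $\sigma(H)\le\sigma(G_2)=\sigma_3(G_2)$; hence the hypothesis $\min\{|V(G_1)|,|V(G_2)|\}\ge\sigma_3(G_1)+\sigma_3(G_2)=\sigma_3(G_2)\ge\sigma(H)$ is exactly condition~(ii) of Theorem~\ref{th:one3}, and the conclusion follows. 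It remains to handle $\chi(G_1)=\chi(G_2)=3$.

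For the main case I would fix optimal $3$-colourings of $G_1$ and $G_2$ with class sizes $s_1\ge s_2\ge s_3=\sigma(G_1)$ and $t_1\ge t_2\ge t_3=\sigma(G_2)$, and set $N=|V(H)|$ together with $\alpha=s_1+t_1\ge\beta=s_2+t_2\ge\gamma=s_3+t_3$, so that $\alpha+\beta+\gamma=N$. Since both components are $3$-chromatic, Lemma~\ref{le:R3}(iii) gives $R_3(H)\ge 3N-2$. Assuming for contradiction an edge-colouring of $K_{R_3(H)}$ with neither a rainbow $P_5$ nor a monochromatic $H$, I would apply Lemma~\ref{le:Caseb+} (with the two components $G_1,G_2$) to obtain parts $V_2,\dots,V_k$ with $k\ge4$, all cross-part edges of colour~$1$, $|V_2|\ge\cdots\ge|V_k|$, $|V_3|\ge M:=\max\{|V(G_1)|,|V(G_2)|\}$, and $|V_4|\ge m:=\min\{|V(G_1)|,|V(G_2)|\}$.

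The embedding step is then short. I would take $Y=V_3$, $Z=V_4$, and let $X$ be the union of all the remaining parts, so that $X,Y,Z$ are disjoint unions of parts and every edge between two of them has colour~$1$. Since in each graph the middle class is at most half the vertices, $\beta=s_2+t_2\le\tfrac12 N\le M\le|V_3|=|Y|$; the size hypothesis gives $\gamma=s_3+t_3\le m\le|V_4|=|Z|$; and because $X\supseteq V_2$ is the largest of the three groups, $|X|\ge R_3(H)/3\ge N-\tfrac23>N-4\ge\alpha$, using $s_2,s_3,t_2,t_3\ge1$ for the last inequality. With $|X|\ge\alpha$, $|Y|\ge\beta$, $|Z|\ge\gamma$, Observation~\ref{obs:embedding}(ii) yields a copy of $G_1\cup G_2$ inside the complete tripartite graph on $X,Y,Z$, which is monochromatic of colour~$1$, a contradiction. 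Together with Inequality~(\ref{eq:lower bound}) this gives $f(H,P_5)=R_3(H)$.

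The point I expect to matter most is that the size hypothesis is calibrated precisely so that the single part $V_4$ already contains at least $\gamma=\sigma_3(G_1)+\sigma_3(G_2)$ vertices: this lets the smallest class of the embedding sit inside one part and frees the two largest parts to carry the other two classes, after which the counting is routine. The only real care needed is checking $\beta\le M$ and $|X|\ge\alpha$. If one instead tried to drop the hypothesis, as in Problem~\ref{prob:G1G2}, the difficulty would migrate to the regime $m<\gamma$, where no single part is guaranteed to host the smallest class; one would then have to pack the indivisible parts $V_2,\dots,V_k$ into three groups of prescribed sizes, and a few very large parts can make this greedy balancing fail, which is exactly where the general problem becomes hard.
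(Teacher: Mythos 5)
Your proposal is correct and takes essentially the same route as the paper's proof: the case $\chi(G_1)\leq 2$ is reduced to Theorem~\ref{th:one3} exactly as the paper does, and the case $\chi(G_1)=\chi(G_2)=3$ combines Lemma~\ref{le:Caseb+}, the lower bound $R_3(G_1\cup G_2)\geq 3|V(G_1\cup G_2)|-2$ from Lemma~\ref{le:R3}~(iii), and the tripartite embedding of Observation~\ref{obs:embedding}~(ii). The only cosmetic difference is the grouping of parts: the paper uses $V_2$, $V_3$ and $V_4\cup\cdots\cup V_k$, assigning the largest and smallest class roles to whichever of $V_2$ and $V_4\cup\cdots\cup V_k$ is bigger, whereas you use $V_2\cup V_5\cup\cdots\cup V_k$, $V_3$ and $V_4$ with fixed roles; both groupings make the same counting go through.
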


\begin{proof}
If $\chi(G_1)=2$, then $\sigma_3(G_1)=0$, so $\min\{|V(G_1)|, |V(G_2)|\}\geq \sigma_3(G_1)+\sigma_3(G_2)$ is equivalent to $|V(G_1)|\geq \sigma(G_1\cup G_2)$.
Then the proposition is true by Theorem~\ref{th:one3}.
Assume now $\chi(G_1)=\chi(G_2)= 3$ and $\min\{|V(G_1)|, |V(G_2)|\}\geq \sigma_3(G_1)+\sigma_3(G_2)$.
Let $s_1, s_2, s_3$ (resp., $t_1, t_2, t_3$) be the sizes of color classes in a proper 3-vertex-coloring of $G_1$ (resp., $G_2$) with $s_1\geq s_2\geq s_3=\sigma(G_1)$ (resp., $t_1\geq t_2\geq t_3=\sigma(G_2)$).

By Inequality~(\ref{eq:lower bound}), it suffices to show that $f(H,P_5)\leq R_{3}(H)$.
Suppose, for the sake of contradiction, that there exists an edge-coloring $F$ of $K_{R_3(G_1\cup G_2)}$ without rainbow $P_5$ or monochromatic $G_1\cup G_2$.
By Lemma~\ref{le:Caseb+}, we can partition $V(F)$ into $k-1$ parts $V_2, V_3, \ldots, V_{k}$ for some $k\geq 4$ satisfying
\begin{itemize}
\item[{\rm (i)}] $\{i\}\subseteq C(V_i)\subseteq \{1,i\}$ for every $i\in \{2, 3, \ldots, k\}$,
\item[{\rm (ii)}] $c(V_i, V_j)=1$ for every $2\leq i<j\leq k$,
\end{itemize}
and in addition, we have
$|V_2|\geq |V_3|\geq \cdots \geq |V_{k}|,$
$$|V_3|\geq \max\{|V(G_1)|, |V(G_2)|\}\geq \frac{1}{2}|V(G_1)|+\frac{1}{2}|V(G_2)|\geq s_2+t_2$$
and
$$\min\{|V_2|, |V_4\cup \cdots \cup V_k|\}\geq |V_4|\geq \min\{|V(G_1)|, |V(G_2)|\}\geq \sigma_3(G_1)+\sigma_3(G_2)= s_3+t_3.$$
Moreover, by Lemma~\ref{le:R3}~(iii) and since $|V_2|\geq |V_3|$ and $|V_2|+|V_3|+|V_4\cup \cdots \cup V_k|=R_3(G_1\cup G_2)$, we have
\begin{align*}
  \max\{|V_2|, |V_4\cup \cdots \cup V_k|\}\geq &~\left\lceil\frac{1}{3}R_{3}(G_1\cup G_2)\right\rceil ~\geq \left\lceil\frac{1}{3}\left(3(|V(G_1)|+|V(G_2)|)-2\right)\right\rceil\\
   = &~|V(G_1)|+|V(G_2)| ~\geq s_1+t_1.
\end{align*}
By Observation~\ref{obs:embedding}~(ii), there is a monochromatic $G_1\cup G_2$ of color 1 using edges between $V_2$, $V_3$ and $V_4\cup \cdots \cup V_k$, a contradiction.
This completes the proof of Proposition~\ref{prop:G1G2+}.
\end{proof}
\vspace{0.2cm}

\noindent {\bf (3)}~
For all graphs $H$ for which we have proved so far that $f(H, P_5)=R_{3}(H)$, the corresponding extremal constructions are $3$-edge-colored $K_{R_{3}(H)-1}$ without monochromatic copies of $H$.
This is in fact not related to the condition of rainbow $P_5$, since a $3$-edge-colored graph certainly contains no rainbow $P_5$.
We do not know whether there exists an extremal construction using at least four colors.
We suspect that the problem may behave very differently if we restrict our attention to edge-colorings with at least four colors.
Therefore, we suggest the following problem for further research.

\begin{problem}\label{prob:exact}
Let $H$ be a graph.
\begin{itemize}
\item[{\rm (i)}] Determine the minimum integer $n$ such that in every edge-coloring of $K_{n}$ \textbf{with at least four colors}, there is either a monochromatic copy of $H$ or a rainbow copy of $P_5$.
\item[{\rm (ii)}] For any integer $k\geq 4$, determine the minimum integer $n$ such that in every edge-coloring of $K_{n}$ \textbf{with exactly $k$ colors}, there is either a monochromatic copy of $H$ or a rainbow copy of $P_5$.
\end{itemize}
\end{problem}

Regarding Problem~\ref{prob:exact}~(ii), we can provide the following lower bound construction when $k \leq \chi(H)$.
Recall that $\mathcal{M}(H)$ is the decomposition family of $H$.
For $2\leq i\leq \chi(H)-1$, let $\mathcal{M}_i(H)$ be the set of minimal graphs $M$ that satisfies the following:
for each $M$, there exists an integer $t$ such that $H\subseteq \left(M\cup \overline{K}_{t-|V(M)|}\right)\vee K_{(\chi(H)-i)\times t}$.
Note that $\mathcal{M}_2(H)=\mathcal{M}(H)$.
Then for $4\leq k \leq \chi(H)$, a lower bound on the minimum integer $n$ in Problem~\ref{prob:exact}~(ii) is $(k-2)(|V(H)|-1)+R(\mathcal{M}_{\chi(H)-k+2}(H), H)$.
To see this, we defined a $k$-edge-colored complete graph on $(k-2)(|V(H)|-1)+R(\mathcal{M}_{\chi(H)-k+2}(H), H)-1$ vertices as follows.
We partition the vertex set into $k-1$ parts $V_2, V_3, \ldots, V_k$ with $|V_2|=|V_3|= \cdots =|V_{k-1}|=|V(H)|-1$ and $|V_k|=R(\mathcal{M}_{\chi(H)-k+2}(H), H)-1$.
For each $i\in \{2, 3, \ldots, k-1\}$, we color all the edges within $V_i$ using color $i$.
We color the edges within $V_k$ using colors 1 and $k$ such that it contains neither a monochromatic graph in $\mathcal{M}_{\chi(H)-k+2}(H)$ of color 1 nor a monochromatic $H$ of color 2.
We color all the remaining edges with color 1.
Then there is no monochromatic $H$ or rainbow $P_5$ in this $k$-edge-colored complete graph, and thus the lower bound holds.
This lower bound is sharp when $H$ is a clique $K_p$.
Indeed, it was shown by the authors and Wang~\cite{LiWL} that for $p\geq 5$ and $k\in \{p-1, p\}$, the answer to Problem~\ref{prob:exact}~(ii) is $(k-2)(p-1)+R(K_{p-k+2}, K_p)$.
%For $4\leq k\leq p-2$, the first author and Su showed in an unpublished manuscript that the answer to Problem~\ref{prob:exact}~(ii) is $\max_{(a_2, \ldots, a_k)\in X}\big(\sum_{i=2}^{k}(R(a_i+1, p)-1)\big)+1,$ where $X=\big\{(a_2, \ldots, a_k)\in \{1, 2, \ldots, p-k+2\}^{k-1}\colon\, \sum_{i=2}^{k}a_i= p-1\big\}.$
%Note that $\max_{(a_2, \ldots, a_k)\in X}\big(\sum_{i=2}^{k}(R(a_i+1, p)-1)\big)+1= (k-2)(p-1)+R(K_{p-k+2}, K_p)$ if the inequality $R(a,p)+R(b,p)\leq R(a-1,p)+R(b+1, p)$ holds for all $3\leq a\leq b\leq p-1$.
For $4\leq k\leq p-2$, the first author and Su showed in an unpublished manuscript that the answer to Problem~\ref{prob:exact}~(ii) is $\max_{(a_2, \ldots, a_k)\in X}\big(\sum_{i=2}^{k}(R(K_{a_i+1}, K_p)-1)\big)+1,$ where $X=\big\{(a_2, \ldots, a_k)\in \{1, 2, \ldots, p-k+2\}^{k-1}\colon\, \sum_{i=2}^{k}a_i= p-1\big\}.$
Note that $\max_{(a_2, \ldots, a_k)\in X}\big(\sum_{i=2}^{k}(R(K_{a_i+1}, K_p)-1)\big)+1= (k-2)(p-1)+R(K_{p-k+2}, K_p)$ if the inequality $R(K_a, K_p)+R(K_b, K_p)\leq R(K_{a-1}, K_p)+R(K_{b+1}, K_p)$ holds for all $3\leq a\leq b\leq p-1$.
For general graphs, especially disconnected graphs, Problem~\ref{prob:exact} remains open.
\vspace{0.2cm}

\noindent {\bf (4)}~
For general path $P_t$ with $t\geq 6$, we have $f(H,P_t)\geq R_{t-2}(H)$ since a $(t-2)$-edge-colored graph trivially contains no rainbow $P_t$.
However, we have no convincing evidence to suggest that $f(H,P_t)\leq R_{t-2}(H)$ holds true.
Hence, we pose the following problem for further research.

\begin{problem}\label{prob:Pt}
Given a graph $H$ and integer $t\geq 6$, determine an upper bound on $f(H,P_t)$.
\end{problem}
%
%In 2003, Jamison, Jiang and Ling~\cite{JaJL} proved that $\Omega(st)\leq f(S,P_t)\leq O(st^2)$ for any tree $S$ on $s$ edges, and conjectured that $f(S,P_t)= O(st)$.
%Recently, Gishboliner, Milojevi\'{c}, Sudakov and Wigderson~\cite{GMSW} proved a nearly optimal upper bound, which differs from the lower bound by a function of inverse-Ackermann type.
%When $S$ is a star, we can prove the following result.
%
%\begin{proposition}\label{prop:starPt}
%$R_{t-2}(K_{1,s})\leq f(K_{1,s}, P_t)\leq (t-2)(s-1)+t-1$. %This is a corollary of \cite[Theorem~4.2]{JaJL}.
%\end{proposition}
%
%\begin{proof}
%By induction on $t$.
%\end{proof}
\vspace{0.2cm}

\noindent {\bf (5)}~
Recently, the first author and Wang~\cite{LiWa} studied a hypergraph version of the problem.
There are three 3-uniform paths of length 3: the tight path $\mathcal{T}=\{v_1v_2v_3, v_2v_3v_4, v_3v_4v_5\}$, the messy path $\mathcal{M}=\{v_1v_2v_3, v_2v_3v_4, v_4v_5v_6\}$ and the loose path $\mathcal{L}=\{v_1v_2v_3, v_3v_4v_5, v_5v_6v_7\}$.
The authors in \cite{LiWa} characterized the structures of edge-colored $K_n^{(3)}$ without rainbow $\mathcal{T}$, $\mathcal{M}$ and $\mathcal{L}$, respectively.
As applications, they showed that $f(H, G)=R_2(H)$ for $G\in \{\mathcal{T}, \mathcal{M}, \mathcal{L}\}$ and infinitely many 3-uniform hypergraphs $H$.
For higher uniformity or higher length, the situation becomes more complicated.
We propose the following related problem for further research.

\begin{problem}\label{prob:hyper}
For the 3-uniform tight path or loose path $P$ of length 4 and a 3-uniform hypergraph $H$, determine if $f(H, P)=R_3(H)$.
\end{problem}
\vspace{0.2cm}

\noindent {\bf (6)}~
In 2004, Eroh and Oellermann~\cite{ErOe} introduced a bipartite variation of the constrained Ramsey number $f(H,G)$.
Given two bipartite graphs $H$ and $G$, the {\it bipartite constrained Ramsey number} (also called {\it bipartite rainbow Ramsey number}) $h(H,G)$ is defined as the minimum integer $n$ such that, in every edge-coloring of $K_{n,n}$ with any number of colors, there is either a monochromatic copy of $H$ or a rainbow copy of $G$.
Eroh and Oellermann~\cite{ErOe} showed that $h(H,G)$ exists if and only if either $H$ is a star or $G$ is a star forest.
Therefore, if $G$ is a path of length at least 3, then $h(H,G)$ exists if and only if $H$ is a star.
%Eroh and Oellermann~\cite{ErOe} proved that for integers $m,n\geq 2$ with $m\leq 2n-3$, we have $h(K_{1,n}, P_{m+1})=(m-1)(n-1)+1=BR_{m-1}(K_{1,n})$.\footnote{The bipartite Ramsey number $BR_k(K_{1,n})=k(n-1)+1$ was obtained by Hattingh and Henning~\cite{HaHe} in 1998.}
Eroh and Oellermann~\cite{ErOe} proved that for integers $m,n\geq 2$ with $m\leq 2n-3$, we have $h(K_{1,n}, P_{m+1})=(m-1)(n-1)+1=BR_{m-1}(K_{1,n})$.
In particular, it follows that $h(K_{1,n}, P_{4})=BR_2(K_{1,n})$ for $n\geq 3$, and $h(K_{1,n}, P_{5})=BR_3(K_{1,n})$ for $n\geq 4$.
One can also derive that $h(K_{1,2}, P_{4})=3=BR_2(K_{1,2})$, $h(K_{1,2}, P_{5})=5=BR_3(K_{1,2})+1$ and $h(K_{1,3}, P_{5})=7=BR_3(K_{1,3})$ from Lemmas~\ref{th:LiWLP4} and \ref{th:LiWLP5}.
Thus $h(K_{1,n}, P_{5})=BR_3(K_{1,n})$ only holds for $n\geq 3$.

If we intend to study the problem for other monochromatic subgraphs, we must take into account an additional restriction on the number of colors to ensure the study is well-defined.
Given two bipartite graphs $H,G$ and a positive integer $k$, let $h_k(H,G)$ be the minimum integer $n$ such that in any edge-coloring of $K_{n,n}$ with at most $k$ colors, there is either a monochromatic copy of $H$ or a rainbow copy of $G$.
We can prove the following two results related to rainbow $P_4$ and $P_5$.
Recall that $s(H), t(H), s^{\ast}(H), t^{\ast}(H)$ are the parameters defined in Section~\ref{subsec:Ramsey}.

\begin{proposition}\label{prop:bipartiteP4}
Let $H$ be a bipartite graph and $k\geq 3$ be an integer.
If $H$ is connected or $k\geq \frac{t(H)-1}{s(H)-1}$, then $h_k(H, P_4)=\max\{BR_2(H), k(s(H)-1)+1\}.$
\end{proposition}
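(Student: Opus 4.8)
The plan is to establish the two inequalities separately: the lower bound $h_k(H,P_4)\ge \max\{BR_2(H),\,k(s(H)-1)+1\}$ via explicit colorings, and the matching upper bound by a contradiction argument built on the structural Lemma~\ref{th:LiWLP4}. Throughout I would write $s=s(H)$ and $t=t(H)$, and I would use the fact (immediate from the definitions of $s(H)$ and $t(H)$) that the partite-set coloring of $H$ realizing the smallest small-side $s$ has large-side exactly $t=|V(H)|-s$; consequently, whenever $a\ge s$ and $b\ge t$, the complete bipartite graph $K_{a,b}$ contains $H$.

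For the lower bound I would exhibit two colorings, each on a complete bipartite graph with one fewer vertex per side than the claimed value, avoiding both a monochromatic $H$ and a rainbow $P_4$. The first is any $2$-coloring of $K_{BR_2(H)-1,\,BR_2(H)-1}$ with no monochromatic $H$; since it uses only two colors it cannot contain a rainbow $P_4$ (which needs three distinctly colored edges), giving $h_k(H,P_4)\ge BR_2(H)$. The second takes $K_{k(s-1),\,k(s-1)}$, partitions one side $U$ into $k$ blocks $U_1,\dots,U_k$ each of size $s-1$, and colors every edge incident to $U_i$ with color $i$. This is precisely the no-rainbow-$P_4$ pattern of Lemma~\ref{th:LiWLP4}, and each color class is a $K_{s-1,\,k(s-1)}$; since every partite-set coloring of $H$ has small-side at least $s>s-1$, no color class contains $H$, so $h_k(H,P_4)\ge k(s-1)+1$. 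Note that neither construction uses the hypothesis.

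For the upper bound, set $N=\max\{BR_2(H),\,k(s-1)+1\}$ and suppose some coloring of $K_{N,N}$ with $k'\le k$ colors has neither a monochromatic $H$ nor a rainbow $P_4$. If $k'\le 2$, then $N\ge BR_2(H)$ forces a monochromatic $H$, a contradiction. If $k'\ge 3$, Lemma~\ref{th:LiWLP4} partitions one side into $k'$ blocks $U_1,\dots,U_{k'}$ with $c(U_i,V)=i$, so the color-$i$ subgraph is exactly $K_{|U_i|,\,N}$. The decisive point is that $N\ge t$: indeed, if $H$ is connected then $s=s^{\ast}(H)$, $t=t^{\ast}(H)$, and Lemma~\ref{le:bipar} gives $BR_2(H)\ge 2(t-1)+1\ge t$; while if $k\ge \frac{t-1}{s-1}$, then $k(s-1)+1\ge t$ directly. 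Granting $N\ge t$, the absence of a monochromatic $H$ in each color $i$ forces $|U_i|\le s-1$ (otherwise $K_{|U_i|,N}\supseteq K_{s,t}\supseteq H$), whence $N=\sum_{i}|U_i|\le k'(s-1)\le k(s-1)<k(s-1)+1\le N$, a contradiction. This would complete both bounds.

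The hard part is exactly the step $N\ge t$, and the two hypotheses are tailored to supply it in the two regimes: connectivity pins down the partition of $H$ so that Lemma~\ref{le:bipar} applies, while the arithmetic condition $k\ge\frac{t-1}{s-1}$ makes the second construction already as large as $t$. Without one of these, a disconnected $H$ can admit a very lopsided partition (with $t$ large relative to both $BR_2(H)$ and $k(s-1)$), and then a color class $K_{|U_i|,N}$ with $|U_i|\ge s$ but $N<t$ might still avoid $H$, so the counting argument collapses; covering that regime would require a genuinely different idea, which is why the statement is restricted. I would also remark that the degenerate cases $s=1$ (where only the connectivity hypothesis can apply, and the counting then forces every $U_i$ to be empty) and $t\le 1$ (where $H\subseteq K_2$) are consistent with the formula and are checked directly.
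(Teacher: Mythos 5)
Your proposal is correct and takes essentially the same approach as the paper: the same two lower-bound constructions, and an upper bound that invokes Lemma~\ref{th:LiWLP4} and then counts the block sizes of the resulting partition against $s(H)-1$. The only cosmetic differences are that in the connected case you obtain $N\ge t(H)$ from Lemma~\ref{le:bipar}, whereas the paper instead uses $s(H)=s^{\ast}(H)$ together with the trivial bound $BR_2(H)\ge t^{\ast}(H)$, and that you phrase the paper's pigeonhole step contrapositively (capping every $|U_i|\le s(H)-1$ and summing).
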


\begin{proof}
For the lower bound, since a 2-edge-colored graph contains no rainbow $P_4$, we have $h_k(H, P_4)\geq BR_2(H)$.
In order to prove $h_k(H, P_4)\geq k(s(H)-1)+1$, we defined an edge-colored $K_{k(s(H)-1),k(s(H)-1)}$ with partite sets $X$ and $Y$ as follows.
We partition $X$ into $k$ parts $X_1, X_2, \ldots, X_k$ with $|X_1|=|X_2|= \cdots =|X_k|=s(H)-1$, and color the edges such that $c(X_i, Y)=i$ for every $i\in [k]$.
Note that there is no rainbow $P_4$ or monochromatic $H$ is this edge-colored $K_{k(s(H)-1),k(s(H)-1)}$, so $h_k(H, P_4)\geq k(s(H)-1)+1$.

We now show that if $H$ is connected or $k\geq \frac{t(H)-1}{s(H)-1}$, then $h_k(H, P_4)\leq \max\{BR_2(H), k(s(H)-1)+1\}.$
We shall prove a slightly stronger statement: if $s(H)=s^{\ast}(H)$ or $t(H)\leq \max\{BR_2(H),$ $k(s(H)-1)+1\}$, then $h_k(H, P_4)\leq \max\{BR_2(H), k(s(H)-1)+1\}.$
For a contradiction, suppose that there exists an edge-coloring $F$ of $K_{n,n}$ with at most $k$ colors such that there is no rainbow $P_4$ or monochromatic $H$ in $F$, where $n=\max\{BR_2(H), k(s(H)-1)+1\}$.
Let $k'$ be the number of colors used on $E(F)$, and let $U,V$ be the partite sets of $F$.
Then $3\leq k'\leq k$, and by Lemma~\ref{th:LiWLP4}, we may assume that $U$ can be partitioned into $k'$ parts $U_1, U_2, \ldots, U_{k'}$ such that $c(U_i, V)=i$ for every $i\in [k']$.
Note that $\max_{i\in [k']}|U_i|\geq \left\lceil\frac{n}{k'}\right\rceil\geq s(H)$ and $|V|=n\geq BR_2(H) \geq t^{\ast}(H)$.
In the case that $s(H)=s^{\ast}(H)$, we have $\max_{i\in [k']}|U_i|\geq s(H)=s^{\ast}(H)$, so there is a monochromatic $K_{s^{\ast}(H),t^{\ast}(H)}$ (and thus a monochromatic $H$) in $F$, a contradiction.
In the case that $t(H)\leq \max\{BR_2(H), k(s(H)-1)+1\}=n=|V|$, there is a monochromatic $K_{s(H), t(H)}$ (and thus a monochromatic $H$) in $F$, a contradiction.
This completes the proof of Proposition~\ref{prop:bipartiteP4}.
\end{proof}

\begin{proposition}\label{prop:bipartiteP5}
Let $H$ be a bipartite graph with $V(H)\geq 4$ and $k\geq 4$ be an integer.
If $H$ is connected or $BR_2(H)+t(H)-1\leq \max\{BR_3(H), k(s(H)-1)+1\}$, then $h_k(H, P_5)=\max\{BR_3(H), k(s(H)-1)+1\}.$
\end{proposition}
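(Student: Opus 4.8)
The plan is to prove the two inequalities $h_k(H,P_5)\ge \max\{BR_3(H),k(s(H)-1)+1\}$ and $h_k(H,P_5)\le \max\{BR_3(H),k(s(H)-1)+1\}$ separately, following the template of Proposition~\ref{prop:bipartiteP4}. For the lower bound I would first note that any edge-coloring using at most three colors contains no rainbow $P_5$; hence an extremal $3$-coloring of $K_{n,n}$ with $n=BR_3(H)-1$ avoiding a monochromatic $H$ also avoids a rainbow $P_5$, which gives $h_k(H,P_5)\ge BR_3(H)$ (here $k\ge 4>3$). For the bound $h_k(H,P_5)\ge k(s(H)-1)+1$ I would reuse the construction of Proposition~\ref{prop:bipartiteP4}: partition one side $X$ of $K_{k(s(H)-1),k(s(H)-1)}$ into $k$ blocks $X_1,\dots,X_k$ of size $s(H)-1$ and set $c(X_i,Y)=i$. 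Since the color of an edge depends only on the block of its $X$-endpoint, any $P_5$ has two of its edges sharing their $X$-side endpoint and hence the same color, so there is no rainbow $P_5$; and each color class is a $K_{s(H)-1,\ast}$, too thin to contain $H$.

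For the upper bound I would prove the slightly stronger statement that $h_k(H,P_5)\le n$, where $n=\max\{BR_3(H),k(s(H)-1)+1\}$, holds whenever $s(H)=s^{\ast}(H)$ \emph{or} $BR_2(H)+t(H)-1\le n$; the hypothesis then finishes the proof, as a connected $H$ satisfies $s(H)=s^{\ast}(H)$. Suppose for contradiction that a $k$-coloring $F$ of $K_{n,n}$ has neither a rainbow $P_5$ nor a monochromatic $H$, and let $k'$ be the number of colors used. Since $n\ge BR_3(H)$ we must have $k'\ge 4$, so Lemma~\ref{th:LiWLP5} places $F$ in one of the cases (a), (b), (c). The uniform tool is the observation that $H$ embeds in any monochromatic complete bipartite $K_{a,b}$ as soon as $\min\{a,b\}\ge s(H)$ and $\max\{a,b\}\ge t(H)$ (place the bipartition of $H$ of small side $s(H)$); I therefore aim in every case to exhibit such a monochromatic $K_{a,b}$. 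The bounds $n\ge BR_3(H)\ge t^{\ast}(H)$ and $t(H)\le n$ (the latter from $BR_2(H)+t(H)-1\le n$, or from $t(H)=t^{\ast}(H)\le BR_3(H)$ when $s(H)=s^{\ast}(H)$) ensure the large dimension can reach $t(H)$.

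Case (c) only occurs for $n\in\{3,4\}$ and I would dispose of it by inspection, since a monochromatic perfect matching of $K_{4,4}$ is a $4K_2$ containing every matching, while a non-matching $H$ of order at least $4$ already forces $n>4$. Case (a) is the analogue of the $P_4$ argument: color $1$ contains both $U_2\times V$ and the full column $U\times V_1$, while each color $i\ge 2$ is exactly $U_1\times V_i$, and a pigeonhole on $|U_2|$ against the largest $|V_i|$ always yields a monochromatic $K_{a,b}$ of the required dimensions. The genuinely hard case is (b), with diagonal blocks $U_i\times V_i$ colored from $\{1,i\}$ and all cross blocks colored $1$. If some block has $\min\{|U_i|,|V_i|\}\ge BR_2(H)$, its induced two-coloring already contains a monochromatic $H$, so I may assume every block is thin. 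A counting step then produces a \emph{good} block $j_0$, say with $|U_{j_0}|<BR_2(H)$ and $|V_{j_0}|\ge s(H)$: otherwise every block would have both sides $<s(H)$, forcing $n=\sum_i|U_i|\le (k-1)(s(H)-1)<k(s(H)-1)+1\le n$, a contradiction. For such a block the cross edges $V_{j_0}\times(U\setminus U_{j_0})$ are all of color $1$ with dimensions $|V_{j_0}|\ge s(H)$ and $n-|U_{j_0}|>n-BR_2(H)\ge t(H)-1$, so $BR_2(H)+t(H)-1\le n$ pushes the second dimension up to $t(H)$ and yields the desired monochromatic $H$.

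The main obstacle is precisely this last step when only $s(H)=s^{\ast}(H)$ is available (as for connected $H$) rather than $BR_2(H)+t(H)-1\le n$. The problematic configuration is a single diagonal block $U_{j_0}\times V_{j_0}$ dominating both sides (its complements of size $<s(H)$) while remaining thin, so that the good-block rectangle fails to reach width $t(H)$. I plan to handle this through the extra freedom of \emph{merging} blocks, namely that for any index partition $P_1\sqcup P_2$ the rectangle $\big(\bigcup_{i\in P_1}U_i\big)\times\big(\bigcup_{j\in P_2}V_j\big)$ is monochromatic in color $1$, used together with the reverse orientation and, in the dominant-block regime, the lower bound $BR_3(H)\ge 3(t(H)-1)+1$ from Lemma~\ref{le:bipar}; the latter should force the dominant block to satisfy $\min\{|U_{j_0}|,|V_{j_0}|\}\ge BR_2(H)$, i.e.\ to be thick after all. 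Balancing these sub-cases so that one monochromatic rectangle simultaneously attains small side $\ge s(H)$ and large side $\ge t(H)$ is where the real work lies.
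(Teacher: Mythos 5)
Your lower bound, your dispatch of cases (a) and (c) of Lemma~\ref{th:LiWLP5}, and your ``good block'' argument for case (b) under the hypothesis $BR_2(H)+t(H)-1\le n$ are all sound and essentially equivalent to the paper's (the paper takes the largest block $U_2$ and bounds $|V_3\cup\cdots\cup V_{k'}|$ instead of locating a thin-by-wide block, but the two routes are interchangeable). The genuine gap is exactly the sub-case you yourself flag as ``where the real work lies'': case (b) when $H$ is connected and the inequality $BR_2(H)+t(H)-1\le n$ is not available. Your plan there cannot be completed with the stated tools. In the dominant-block configuration the two facts you want to play off each other are thinness, $\min\{|U_{j_0}|,|V_{j_0}|\}\le BR_2(H)-1$, and dominance plus Lemma~\ref{le:bipar}, $\min\{|U_{j_0}|,|V_{j_0}|\}\ge n-(s(H)-1)\ge 3t(H)-s(H)-1$. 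These are perfectly compatible: Lemma~\ref{le:bipar} with $k=2$ already gives $BR_2(H)\ge 2t(H)-1$, and $BR_2(H)$ can exceed $3t(H)-s(H)$ by an arbitrary amount (for $H=K_{t,t}$ it is exponential in $t$), so no contradiction follows. What your plan implicitly needs is an inequality of the type $BR_3(H)\ge BR_2(H)+s(H)-1$ for every connected bipartite $H$, which is proved nowhere (the natural construction --- add $s(H)-1$ new rows and columns in a fresh color to an extremal $2$-coloring --- fails already for double stars, whose vertex cover number is smaller than $s(H)$). More fundamentally, any argument that only uses merged monochromatic rectangles $\bigl(\bigcup_{i\in P_1}U_i\bigr)\times\bigl(\bigcup_{j\in P_2}V_j\bigr)$ is doomed: in a coloring with one huge two-colored diagonal block and fewer than $s(H)$ singleton diagonal blocks, every such rectangle has a side of length less than $s(H)$, so the copy of $H$ that must exist can only be found using the color-$1$ edges \emph{inside} the big block, which your approach discards.

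The idea you are missing is the paper's recoloring argument, which settles case (b) for connected $H$ in three lines: recolor all colors in $\{4,\dots,k'\}$ by color $3$ to get a $3$-edge-coloring $F'$ of $K_{n,n}$; since $n\ge BR_3(H)$, $F'$ contains a monochromatic $H$; if its color is $1$ or $2$ it is monochromatic in $F$ as well, and if its color is $3$ it lies in $\bigcup_{i\ge 3}U_i\times V_i$, where connectivity of $H$ confines it to a single block $U_i\times V_i$, so all of its edges carry the same original color $i$. Either way $F$ has a monochromatic $H$, a contradiction. Note that this argument uses connectivity itself, not merely $s(H)=s^{\ast}(H)$; this is also why your proposed ``slightly stronger statement'' (with $s(H)=s^{\ast}(H)$ replacing connectivity) is not a safe strengthening --- both in case (a), where the paper invokes Lemma~\ref{le:bipar} (valid only for connected $H$), and in case (b), connectivity is what does the work.
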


\begin{proof}
For the lower bound, since a 3-edge-colored graph contains no rainbow $P_5$, we have $h_k(H, P_5)\geq BR_3(H)$.
Moreover, since $P_4\subseteq P_5$, we have $h_k(H, P_5)\geq h_k(H, P_4)\geq k(s(H)-1)+1$ by the proof of Proposition~\ref{prop:bipartiteP4}.

For the upper bound, suppose for a contradiction that there exists an edge-coloring $F$ of $K_{n,n}$ with at most $k$ colors such that there is no rainbow $P_5$ or monochromatic $H$ in $F$, where $n=\max\{BR_3(H), k(s(H)-1)+1\}$.
By the above mentioned result on $h(K_{1,n}, P_{5})$ and since $|V(H)|\geq 4$, we may assume that $H$ is not a star.
Then $s(H)\geq 2$ and $n\geq k(s(H)-1)+1\geq k+1\geq 5$.
Let $k'$ be the number of colors used on $E(F)$, and let $U,V$ be the partite sets of $F$.
Then $4\leq k'\leq k$, and by Lemma~\ref{th:LiWLP5}, one of Lemma~\ref{th:LiWLP5}~(a) and (b) holds.
%
%If (a) holds, then $\max_{i\in [k']}|V_i|\geq \left\lceil\frac{n}{k'}\right\rceil\geq s(H)$, and $\max\{|U_1|, |U_2|\}\geq \frac{1}{2}n\geq \frac{1}{2}BR_3(H) \geq \frac{1}{2}(3(t(H)-1)+1)\geq t(H)$ by Lemma~\ref{le:bipar}.
%Thus $F$ contains a monochromatic $K_{s(H), t(H)}$ (and thus a monochromatic $H$), a contradiction.

We first assume that Lemma~\ref{th:LiWLP5}~(a) holds.
Note that $\max_{i\in [k']}|V_i|\geq \left\lceil\frac{n}{k'}\right\rceil\geq s(H)$.
If $|U_1|\geq t(H)$ or $|U_2|\geq s^{\ast}(H)$, then $F$ contains a monochromatic $K_{s(H), t(H)}$ or $K_{s^{\ast}(H),n}$ (and thus a monochromatic $H$), a contradiction.
Therefore, $n=|U_1|+|U_2|\leq t(H)-1+s^{\ast}(H)-1< BR_2(H)+t(H)-1$.
Thus we only need to consider the case that $H$ is connected.
By Lemma~\ref{le:bipar}, we have $\max\{|U_1|, |U_2|\}\geq \frac{1}{2}n\geq \frac{1}{2}BR_3(H) \geq \frac{1}{2}(3(t(H)-1)+1)\geq t(H)$.
Thus $F$ contains a monochromatic $K_{s(H), t(H)}$ (and thus a monochromatic $H$), a contradiction.

Now we assume that Lemma~\ref{th:LiWLP5}~(b) holds.
In the case that $H$ is connected, we can define a $3$-edge-coloring $F'$ of $K_{n,n}$ by recoloring all edges of colors in $\{4, \ldots, k'\}$ with color $3$.
Note that $F'$ contains a monochromatic $H$ since $n\geq BR_3(H)$.
Since $H$ is connected, such an $H$ is also monochromatic in $F$ (although its color in $F$ is possibly different from that in $F'$), a contradiction.
For the case $BR_2(H)+t(H)-1\leq \max\{BR_3(H), k(s(H)-1)+1\}$, we may assume that $|U_2|=\max_{2\leq i\leq k'}|U_i|$ without loss of generality.
Thus $|U_2|\geq \big\lceil\frac{n}{k'-1}\big\rceil\geq s(H)$.
Then $|V_3\cup \cdots \cup V_{k'}|\leq t(H)-1$, since otherwise there is a monochromatic $H$ of color 1 between $U_2$ and $V_3\cup \cdots \cup V_{k'}$.
Thus $|V_2|=|V|-|V_3\cup \cdots \cup V_{k'}| \geq n-(t(H)-1)\geq BR_2(H)\geq s(H)$.
Then $|U_3\cup \cdots \cup U_{k'}|\leq t(H)-1$, since otherwise there is a monochromatic $H$ of color 1 between $V_2$ and $U_3\cup \cdots \cup U_{k'}$.
Thus $|U_2|=|U|-|U_3\cup \cdots \cup U_{k'}| \geq n-(t(H)-1)\geq BR_2(H)$.
But then since $|U_2|\geq BR_2(H)$, $|V_2|\geq BR_2(H)$ and $C(U_2, V_2)\subseteq \{1,2\}$, there is a monochromatic $H$ between $U_2$ and $V_2$, a contradiction.
This completes the proof of Proposition~\ref{prop:bipartiteP5}.
\end{proof}

Regarding general disconnected bipartite graphs, the problem under consideration seems difficult to resolve.
Therefore, we formulate the following problem for subsequent research.

\begin{problem}\label{prob:bipar}
Given a disconnected bipartite graph $H$ and integers $k\geq t-1$ with $t\in \{4,5\}$, determine whether $h_k(H,P_t)=\max\{BR_{t-2}(H), k(s(H)-1)+1\}.$
\end{problem}

\section*{Acknowledgement}

Xihe Li is supported by the National Natural Science Foundation of China (Grant No. 12501492), Shaanxi Province Postdoctoral Science Foundation (Grant No. 2024BSHSDZZ155) and the Fundamental Research Funds for the Central Universities (Grant No. GK202506024).
Xiangxiang Liu is supported by the Natural Science Basic Research Plan in Shaanxi Province of China (Grant No. 2024JC-YBQN-0015).

%\section*{Declaration of competing interest}
%
%There is no competing interest.
%
%\section*{Data availability}
%
%No data was used for the research described in the article.

\begin{spacing}{0.8} %行间距
\addcontentsline{toc}{section}{References}

\end{spacing}

\end{document}